\documentclass[12pt, reqno]{amsart}

\usepackage[a4paper, centering, total={170mm,240mm}]{geometry}

\usepackage{amssymb,latexsym}
\usepackage{blindtext}
\usepackage[backgroundcolor=lightgray, linecolor=olive, textsize=tiny]{todonotes}
\usepackage{esint,dsfont}
\usepackage{MnSymbol}

\usepackage[colorlinks=true, linkcolor=blue, citecolor=purple, urlcolor=blue]{hyperref}

\def\R{{\mathbb R}}
\def\N{\mathbb{N}}
\def\C{\mathbb{C}}

\DeclareMathOperator{\Id}{Id}
\def\op{\mathrm{op}}

\newtheorem{prop}{\bf Proposition}[section]
\newtheorem{thm}[prop]{\bf Theorem}
\newtheorem{cor}[prop]{\bf Corollary}
\newtheorem{lem}[prop]{\bf Lemma}
\newtheorem{rmk}[prop]{\it Remark}

\newtheorem{defi}[prop]{\bf Definition}
\newtheorem{ques}[prop]{\bf Question}

\begin{document}

\title[Relative property (T) for uniformly bounded representations]{Relative property (T) for uniformly bounded representations}

\author{Guillaume Dumas}
\address{Department of Mathematics, University of Maryland, College Park,
		4176 Campus Drive, College Park, MD 20742}
\email{gdumas@umd.edu}

\author{Ignacio Vergara}
\address{Departamento de Matem\'atica y Estad\'istica, Universidad de La Frontera, Avenida Francisco Salazar 01145, Temuco 4811230, Chile}
\email{ign.vergara.s@gmail.com}

\thanks{I.V. was supported by the ANID-SIA project 85250079 and the ECOS project 230003.}

\subjclass[2020]{Primary 22D55; Secondary 22D12, 43A07}
\keywords{Relative property (T), uniformly bounded representations, invariant means, Kazhdan projections, semidirect products}

\begin{abstract}
For pairs $(G,H)$, where $G$ is a locally compact group and $H$ is a closed subgroup of $G$, we introduce analogous versions of the full group $\mathrm{C}^*$-algebra and the Fourier--Stieltjes algebra for uniformly bounded representations. We use these objects to characterise relative property (T) in this more general setting in terms of Kazhdan projections and invariant means. We also show that, for semidirect products of the form $G=H\ltimes N$, where $N$ is a nilpotent group, relative property (T) for the pair $(G,N)$ is equivalent to its uniformly bounded version. We first prove this result when $N$ is abelian, and then proceed by studying the behaviour of relative property (T) under quotients by central subgroups, thereby extending a theorem of Serre to the uniformly bounded setting.
\end{abstract}

\maketitle

\thispagestyle{empty}

\section{Introduction}

Property (T) was introduced by Kazhdan \cite{Kaz} as a rigidity phenomenon for unitary representations of locally compact groups. In that same paper, a relative version of property (T) for pairs of groups was already implicit and it was used to prove that $\operatorname{SL}(3,\mathbb{K})$ has property (T) by studying the behaviour of its representations with respect to certain subgroups. Nowadays, relative property (T) has been widely studied, and its relevance has been made much more explicit; see e.g. \cite{Cor, Jau}.

In more recent years, the consideration of stronger versions of property (T) has gained increasing interest. One way of generalising property (T) is by considering uniformly bounded representations of a group and studying the rigidity properties of this more general class; see  \cite{DruNow, Mim, Ver}. 

The goal of this paper is to define and study a relative version of property (T) for uniformly bounded representations.

Let $G$ be a locally compact group. For $c\geq 1$, let $\mathcal{R}_c(G)$ denote the class of (SOT-continuous) uniformly bounded representations $\pi:G\to\mathbf{B}(\mathcal{H}_\pi)$, where $\mathcal{H}_\pi$ is a Hilbert space, and
\begin{align*}
	|\pi|=\sup_{s\in G}\|\pi(s)\|\leq c.
\end{align*}
 The group $G$ has property (T)${}_c$ if, for any $\pi\in \mathcal{R}_c(G)$, the existence of almost invariant vectors implies the existence of a nonzero invariant vector (see \cite{Ver} for a detailed study of these properties).

Let $H$ be a closed subgroup of $G$. We can define relative properties in a similar fashion.
\begin{defi}\label{def:relT}
    Let $c\geq 1$. The pair $(G,H)$ has relative property (T)${}_c$ if for any $\pi\in \mathcal{R}_c(G)$ with almost invariant vectors, $\pi\vert_H$ has a nonzero invariant vector.

    The pair $(G,H)$ has relative property (T${}_{ub}$) if $(G,H)$ has relative (T)${}_c$ for any $c\geq 1$.
\end{defi}

In \cite{Ver}, property (T)${}_c$ was characterised in terms of Kazhdan projections and invariant means. Our first result gives a similar characterisation for relative property (T)${}_c$.

For every $\pi\in\mathcal{R}_c(G)$, we let $\mathcal{H}_\pi^G$ denote the subspace of $G$-invariant vectors of $\mathcal{H}_\pi$. If $H$ is a closed subgroup of $G$, we can restrict $\pi$ to $H$ and define $\mathcal{H}_\pi^H$ similarly. Observe that $\mathcal{H}_\pi^G\subseteq\mathcal{H}_\pi^H$. We let $\mathcal{R}_c(G,H)$ denote the subclass of $\mathcal{R}_c(G)$ given by all representations $\pi$ such that
\begin{align*}
    \mathcal{H}_\pi^G=\mathcal{H}_\pi^H.
\end{align*}
For every $\pi\in\mathcal{R}_c(G,H)$, we can decompose 
\begin{align*}
	\mathcal{H}_\pi=\mathcal{H}_\pi^G\oplus \left(\mathcal{H}_{\pi^*}^G\right)^\perp=\mathcal{H}_\pi^H\oplus \left(\mathcal{H}_{\pi^*}^H\right)^\perp,
\end{align*}
where $\pi^*$ denotes the adjoint representation $\pi^*(s)=\pi(s^{-1})^*$, and $\left(\mathcal{H}_{\pi^*}^G\right)^\perp$ is the orthogonal complement of the subspace of $\pi^*$-invariant vectors $\mathcal{H}_{\pi^*}^G$.

We define the algebra $\tilde{A}_c(G,H)$ as the completion of $L^1(G)$ (after modding out by the null space) for the norm
\begin{align*}
\|f\|_{\tilde{A}_c(G,H)}=\sup_{\pi\in\mathcal{R}_c(G,H)}\|\pi(f)\|.
\end{align*}
We also let $B_c(G,H)$ denote the dual Banach space of $\tilde{A}_c(G,H)$. This is a function space, given by matrix coefficients of representations of $G$. Moreover, it always carries a unique invariant mean; see Section \ref{Ssec_matrix_coef}. We obtain the following characterisation.

\begin{thm}\label{Thm_char_(T)c_intro}
	Let $G$ be a locally compact group, $H$ a closed subgroup of $G$, and $c\geq 1$. The following are equivalent:
	\begin{itemize}
		\item[(i)] The pair $(G,H)$ has relative property (T)${}_c$.
		\item[(ii)] There exists an idempotent $P\in \tilde{A}_c(G,H)$ such that
		\begin{itemize}
			\item for every $\pi\in\mathcal{R}_c(G,H)$, $\pi(P)$ is the projection onto $\mathcal{H}_\pi^G$ along $\left(\mathcal{H}_{\pi^*}^G\right)^\perp$,
			\item there exists a sequence of continuous, compactly supported probability measures $(\rho_n)$ on $G$ such that
			\begin{align*}
				\lim_{n\to\infty}\|\rho_n-P\|_{\tilde{A}_c(G,H)}=0.
			\end{align*}
		\end{itemize}
		\item[(iii)] The unique invariant mean $m\in B_c(G,H)^*$ is weak*-continuous. In other words, it belongs to $\tilde{A}_c(G,H)$.
	\end{itemize}
	Moreover, in this case, $P=m$.
\end{thm}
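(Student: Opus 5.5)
I will prove (i)$\Rightarrow$(ii)$\Rightarrow$(iii)$\Rightarrow$(i), following the scheme used in \cite{Ver} for property (T)${}_c$ and relying on the facts about $B_c(G,H)$ from Section \ref{Ssec_matrix_coef}. The main facts I will use there are three. First, $B_c(G,H)$ consists of matrix coefficients $\langle\pi(\cdot)\xi,\eta\rangle$ with $\pi\in\mathcal{R}_c(G,H)$. Second, the unique invariant mean $m$ satisfies $m(\langle\pi(\cdot)\xi,\eta\rangle)=\langle P_\pi\xi,\eta\rangle$, where $P_\pi$ is the projection onto $\mathcal{H}_\pi^G$ along $(\mathcal{H}_{\pi^*}^G)^\perp$. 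Third, $m$ is the weak*-limit of convex combinations of point masses, i.e. of probability measures.

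\textbf{(i)$\Rightarrow$(ii).} I first use relative (T)${}_c$ to get a uniform spectral gap on the complement of the invariant vectors. Fix a compactly supported continuous symmetric probability density $\rho$ whose support generates an open subgroup; I intend to handle the case of non-compactly generated $G$ by using a Kazhdan-pair version over compact sets. I claim there is $\delta<1$ such that $\|\pi(\rho)|_{(\mathcal{H}_{\pi^*}^G)^\perp}\|\le\delta$ in a suitable equivalent norm, uniformly over $\pi\in\mathcal{R}_c(G,H)$. The argument is by contradiction. Suppose there are $\pi_n$ and unit vectors $\xi_n\in(\mathcal{H}_{\pi_n^*}^G)^\perp$ that are almost invariant under the compact set. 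Then $\bigoplus\pi_n$, or an ultraproduct of the $\pi_n$ restricted to these complements, lies in $\mathcal{R}_c(G)$ and has almost invariant vectors. By (i) it has a nonzero $H$-invariant vector. I must transfer this back to a contradiction, which is the hard step.

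The key point is that the complements $(\mathcal{H}_{\pi^*}^G)^\perp$ are $G$-invariant subspaces with no $H$-invariant vectors. This holds because $\mathcal{H}^H_\pi=\mathcal{H}^G_\pi$ meets the complement trivially. Passing to an ultraproduct can, however, create new $H$-invariant vectors. To avoid this, I will instead take $\pi=\bigoplus_n \pi_n|_{(\mathcal{H}_{\pi_n^*}^G)^\perp}$, an $\ell^2$-direct sum, which is still bounded by $c$. Its $H$-invariant vectors have $H$-invariant components, so they are zero. This contradicts (i).

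Once the gap is established, I renorm by averaging over $G$ so that the relevant operators become contractions on the complement. The powers $\pi(\rho^{*n})$ then converge in norm to $P_\pi$ at a geometric rate, uniformly in $\pi$. Hence $\rho_n=\rho^{*n}$ is Cauchy in $\tilde{A}_c(G,H)$, and its limit $P$ satisfies $\pi(P)=P_\pi$, which is idempotent. I expect the uniform-rate estimate to be the main obstacle. I will try to derive it from the standard inequality $\|\pi(\rho)\xi-\xi\|\ge\varepsilon\|\xi\|$ combined with a Neumann-series bound on the complement.

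\textbf{(ii)$\Rightarrow$(iii).} For $\varphi=\langle\pi(\cdot)\xi,\eta\rangle\in B_c(G,H)$, invariance of the mean gives $m(\varphi)=m(\rho_n*\varphi)$ in the appropriate sense. By invariance, $\langle P_\pi\xi,\eta\rangle=\langle P_\pi\pi(\rho_n)\xi,\eta\rangle$. Passing to the limit, and using $P_\pi\pi(P)=\pi(P)$, shows that the pairing of $P$ with $\varphi$ equals $\langle P_\pi\xi,\eta\rangle=m(\varphi)$. So $m$ is given by $P\in\tilde{A}_c(G,H)$, and therefore $m$ is weak*-continuous with $m=P$.

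\textbf{(iii)$\Rightarrow$(i).} Let $\pi\in\mathcal{R}_c(G)$ have almost invariant vectors, and suppose, for contradiction, that $\mathcal{H}_\pi^H=0$. Then $\mathcal{H}_\pi^G=0$, so $\pi\in\mathcal{R}_c(G,H)$ and $\pi(m)=P_\pi=0$. Since $m\in\tilde{A}_c(G,H)$ is a weak*-limit of probability measures lying in the norm closure of the convex set of probability measures, Hahn--Banach (Mazur) gives probability measures $\mu$ with $\|\mu-m\|_{\tilde{A}_c(G,H)}<1/(2c)$. Hence $\|\pi(\mu)\|<1/(2c)$. On the other hand, we may take $\mu$ compactly supported, and an almost invariant unit vector $\xi$ for its support gives $\|\pi(\mu)\xi-\xi\|$ small. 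This forces $\|\pi(\mu)\xi\|\approx 1$, a contradiction.
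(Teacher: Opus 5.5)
Your (iii)$\Rightarrow$(i) is sound. There are, however, genuine gaps in (ii)$\Rightarrow$(iii) and in (i)$\Rightarrow$(ii).

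\textbf{Gap in (ii)$\Rightarrow$(iii).} Your first ``fact'' is not available. You assume that $B_c(G,H)$ consists of matrix coefficients of representations in $\mathcal{R}_c(G,H)$. The paper only shows that these coefficients form a weak*-dense subspace $\tilde{B}_c(G,H)$ (Lemma~\ref{Lem_Bc_dense}). The remark after that lemma explains why equality cannot be proved along the lines of \cite{Ver}: $\mathcal{R}_c(G,H)$ is not stable under ultraproducts.

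Your computation therefore gives $\langle\varphi,P\rangle=m(\varphi)$ only for $\varphi\in\tilde{B}_c(G,H)$. Agreement on a weak*-dense subspace does not yield $m=P$, because $m$ is not known to be weak*-continuous; that is the very conclusion sought.

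The fix is to show that $P$ is itself an invariant mean on all of $B_c(G,H)$ and then invoke uniqueness. Normalisation and positivity come from the approximating measures: $\langle 1,P\rangle=\lim_n\langle 1,\rho_n\rangle=1$, and $\langle\varphi,P\rangle=\lim_n\langle\varphi,\rho_n\rangle\ge 0$ for $\varphi\ge 0$. Invariance holds on $\tilde{B}_c(G,H)$ by Lemma~\ref{Lem_char_p_pi}. It extends to $B_c(G,H)$ because $P$ is weak*-continuous and $\varphi\mapsto s\cdot\varphi$ is the adjoint of $f\mapsto\delta_{s^{-1}}\ast f$, which is bounded on $\tilde{A}_c(G,H)$.

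\textbf{Gap in (i)$\Rightarrow$(ii).} Your derivation of a uniform spectral gap on $(\mathcal{H}_{\pi^*}^G)^\perp=(\mathcal{H}_{\pi^*}^H)^\perp$ for $\pi\in\mathcal{R}_c(G,H)$ is correct and matches the paper. What is not proved is the step from this gap to a uniform bound $\|\pi(\rho)|_{(\mathcal{H}_{\pi^*}^G)^\perp}\|\le\delta<1$, and the tools you name cannot prove it.

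A lower bound $\|\pi(\rho)\xi-\xi\|\ge\varepsilon\|\xi\|$ only says that $1$ is not in the spectrum of $\pi(\rho)$ on the complement. It gives no control of the spectral radius, so no Neumann-series argument yields $\pi(\rho)^n\to 0$. For example, take $G=\mathbb{Z}/2$, the sign character, and $\rho=\delta_{\bar 1}$, which is symmetric with generating support. The gap is $2$, yet $\pi(\rho^{*n})=(-1)^n$. For the same reason, in your contradiction argument the negation of the claimed bound is $\|\pi_n(\rho)\xi_n\|\to 1$. That is not almost-invariance of $\xi_n$ under $Q$.

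The missing ingredient is uniform convexity. On $E_\pi$ the representation is isometric, and the family $\{E_\pi\}$ has a modulus of convexity depending only on $c$. This, together with a suitable choice of $\rho$ (your symmetric generating $\rho$ is not enough, as the example shows), is what turns $\|\pi(\rho)\xi\|_{E_\pi}\approx\|\xi\|_{E_\pi}$ into near-invariance under $Q$.

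This is precisely the content of Theorem~\ref{Thm_DruNow} \cite{DruNow}. The paper applies it to $\mathcal{F}=\{(E_\pi,\pi)\mid\pi\in\mathcal{R}_c(G,H)\}$, using $C_{\mathcal{F}}(G)\cong\tilde{A}_c(G,H)$. You must either cite it or reprove it using uniform convexity. Once such a $\delta$ is in hand, your geometric-convergence and idempotency argument goes through.

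Your (iii)$\Rightarrow$(i) is the paper's Mazur approximation followed by its direct (ii)$\Rightarrow$(i) estimate. In the approximation, use $L^1$ probability densities as the paper does, not convex combinations of point masses, which need not lie in $\tilde{A}_c(G,H)$.
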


When the pair $(G,H)$ has relative property (T)${}_c$, we will refer to the idempotent $P\in \tilde{A}_c(G,H)$ given by Theorem \ref{Thm_char_(T)c_intro} as the Kazhdan projection of $\tilde{A}_c(G,H)$.

\begin{rmk}
	If $H=G$, then relative property (T)${}_c$ for the pair $(G,G)$ is exactly property (T)${}_c$ for the group $G$, and therefore \cite[Theorem 1.1]{Ver} becomes a particular case of Theorem \ref{Thm_char_(T)c_intro}.
\end{rmk}

\begin{rmk}
	Theorem \ref{Thm_char_(T)c_intro} seems to be new even for $c=1$. In that case, $\tilde{A}_1(G,H)$ is a quotient of the full group $\mathrm{C}^*$-algebra $C^*(G)$, and $B_1(G,H)$ is a subspace of the Fourier--Stieltjes algebra $B(G)$. This says that relative property (T) admits characterisations in terms of Kazhdan projections and invariant means as above, extending results from \cite{AkeWal, HaKndL, Val} to the relative setting.
\end{rmk}

The characterisation of property (T) as the weak*-continuity of the unique invariant mean on $B(G)$ was first observed by Haagerup, Knudby and de Laat; see \cite[\S 4]{HaKndL}. This was their starting point for introducing a stronger version of property (T)--called it property (T${}^*$)--by requiring the unique invariant mean on the space of completely bounded multipliers of the Fourier algebra $M_0A(G)$ to be weak*-continuous; see \cite[\S 5]{HaKndL} for details. As observed in \cite[\S 1]{Ver}, property (T${}^*$) implies property (T)${}_c$ for every $c\geq 1$ because the inclusion $B_c(G)\hookrightarrow M_0A(G)$ is weak*-weak*-continuous. In other words, property (T${}^*$) implies property (T${}_{ub}$), and it is not known whether the converse holds; see \cite[Question 1]{Ver}. This raises the following question.

\begin{ques}\label{Ques_rel_T*}
	Can we define a relative version of property (T${}^*$) for pairs $(G,H)$ as the weak*-continuity of the invariant mean on some space of multipliers? This property should satisfy:
	\begin{itemize}
		\item relative property (T${}^*$) implies relative property (T${}_{ub}$),
		\item relative property (T${}^*$) for $(G,G)$ coincides with property (T${}^*$) for $G$.
	\end{itemize}
\end{ques}

The following result is well known in the classical setting ($c=1$). More precisely, if $G$ is a group with property (T), and $H$ is a normal subgroup of $G$, then it follows from the definitions and basic stability properties that $G/H$ has property (T), and the pair $(G,H)$ has relative property (T). Moreover, the converse is also true; see e.g. \cite[Proposition 2.2.9]{Jau} or \cite[Proposition 2.4.4]{Cor}. This characterisation can be extended to $c\geq 1$ by similar arguments. We give an alternative proof of this fact using the language of Kazhdan projections and invariant means.

\begin{prop}\label{Prop_G/H_(G,H)}
	Let $G$ be a locally compact group, $H$ a closed, normal subgroup of $G$, and $c\geq 1$. Then $G$ has property (T)${}_c$ if and only if the following two conditions hold: 
	\begin{itemize}
		\item The group $G/H$ has property (T)${}_c$.
		\item The pair $(G,H)$ has relative property (T)${}_c$.
	\end{itemize}
\end{prop}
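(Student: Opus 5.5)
We prove the two implications separately. For the forward direction we argue directly from Definition~\ref{def:relT}. If $G$ has property (T)${}_c$ and $\pi\in\mathcal{R}_c(G)$ has almost invariant vectors, then $\pi$ has a nonzero $G$-invariant vector, which is in particular $H$-invariant. Hence $(G,H)$ has relative property (T)${}_c$. For the quotient, take $\sigma\in\mathcal{R}_c(G/H)$ with almost invariant vectors and compose with the quotient map $q:G\to G/H$. The representation $\sigma\circ q$ is SOT-continuous and lies in $\mathcal{R}_c(G)$. Since $q$ is continuous and open, every compact subset of $G/H$ is the image of a compact subset of $G$, so $(Q,\varepsilon)$-invariance for $\sigma$ corresponds to $(K,\varepsilon)$-invariance for $\sigma\circ q$, and conversely. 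Thus $\sigma\circ q$ has almost invariant vectors, hence a nonzero invariant vector, which is $\sigma$-invariant.

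For the converse we use the language of Theorem~\ref{Thm_char_(T)c_intro}, in the form: a group has (T)${}_c$ if and only if the invariant mean on $B_c(\cdot)$ is weak*-continuous (this is \cite[Theorem 1.1]{Ver}, i.e.\ the case $H=G$). Since $(G,H)$ has relative (T)${}_c$, let $P\in\tilde A_c(G,H)$ be its Kazhdan projection, approximated by probability measures $\rho_n$. Let $\pi\in\mathcal{R}_c(G)$ have almost invariant vectors. Because $H$ is normal, $\mathcal{H}_\pi^H$ is $G$-invariant, and so is $\mathcal{H}_{\pi^*}^H$.

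The key structural step is to split $\pi$ so that the restriction-to-$H$ part is governed by $P$. Precisely, we want a $G$-invariant decomposition $\mathcal{H}_\pi=\mathcal{H}_\pi^H\oplus(\mathcal{H}_{\pi^*}^H)^\perp$, such that the representation on the second summand has no nonzero $H$-invariant vectors. Then that summand lies in $\mathcal{R}_c(G,H)$, with both invariant spaces trivial, so $P$ acts as $0$ there. Proving this decomposition is the main obstacle. I would first try to get it directly: $\mathcal{H}_\pi^H$ is closed and invariant, and one checks that $H$, viewed as a closed subgroup, enjoys a mean-ergodic-type splitting because $(G,H)$ has relative (T)${}_c$. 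More concretely, let $\pi'$ denote the quotient or complement representation. If $\pi'$ had almost invariant vectors, relative (T)${}_c$ would force $H$-invariant vectors in it, a contradiction. Therefore the complement has no almost invariant vectors, i.e.\ it has a spectral gap: some $\pi'(\rho)$ with $\rho$ a probability measure satisfies $\|\pi'(\rho)\|<1$ uniformly in $c$-bounded representations. Averaging powers of $\pi(\rho_n)$ then yields a bounded idempotent onto $\mathcal{H}_\pi^H$ commuting with $\pi(G)$, which gives the complement.

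With this decomposition in hand, a sequence of almost invariant vectors for $\pi$ projects, via this bounded $G$-equivariant idempotent, to almost invariant vectors of the subrepresentation on $\mathcal{H}_\pi^H$. The projection does not kill them, because the complementary part has a spectral gap and so the almost invariant vectors must concentrate on $\mathcal{H}_\pi^H$. This subrepresentation factors through $G/H$ and has norm at most $c$. Property (T)${}_c$ of $G/H$ then yields a nonzero $G/H$-invariant vector, which is $G$-invariant for $\pi$. Alternatively, at the level of projections: the Kazhdan projection of $G$ would be $P\cdot(P_{G/H}\circ q)$, obtained by composing $P$ with the pullback of the Kazhdan projection $P_{G/H}\in\tilde A_c(G/H)$, and its approximation by probability measures follows from the product $\rho_n*\tilde\mu_n$ of the lifted approximants. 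I expect justifying that this product is a limit in the relevant norm to be routine once the decomposition above is established.
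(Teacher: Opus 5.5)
Your argument is correct in its main line, but it is the classical route rather than the paper's. Also, the step you single out as the main obstacle is not one. The splitting $\mathcal{H}_\pi=\mathcal{H}_\pi^H\oplus(\mathcal{H}_{\pi^*}^H)^\perp$ is \eqref{dec_H-inv}. It holds for every $\pi\in\mathcal{R}_c(G)$ with no rigidity hypothesis (see \cite[\S 2.3]{DruNow}). Both summands are $G$-invariant because $H$ is normal, and the projection $p_H$ onto $\mathcal{H}_\pi^H$ has norm at most $c$ by Lemma \ref{Lem_char_p_pi} applied to $\pi|_H$.

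You should simply cite it, because your sketch does not prove it. The decisive step, that averaging powers of $\pi(\rho_n)$ yields an idempotent onto $\mathcal{H}_\pi^H$, fails. The $\rho_n$ are measures on $G$, so such limits fix $\mathcal{H}_\pi^G$ but act on $\mathcal{H}_\pi^H$ as $\sigma(T_H\rho_n)$, where $\sigma$ is the representation of $G/H$ on $\mathcal{H}_\pi^H$. There is no reason for them to be the identity there. Two further problems:
\begin{itemize}
\item $\|\rho_n-P\|_{\tilde{A}_c(G,H)}\to 0$ gives no control on $\pi(\rho_n)$ when $\pi\notin\mathcal{R}_c(G,H)$.
\item For a non-unitary $\pi'$, the absence of almost invariant vectors does not directly give $\|\pi'(\rho)\|<1$. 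That needs the renorming \eqref{norm_E_pi} and uniform convexity, as in \cite{DruNow}.
\end{itemize}

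Once \eqref{dec_H-inv} is cited, the rest of your converse is complete. The operator $1-p_H$ commutes with $\pi(G)$ and has norm at most $c+1$. So if $(1-p_H)\xi_i$ did not tend to $0$ along a net of almost invariant unit vectors, its normalisations would be almost invariant vectors for a representation in $\mathcal{R}_c(G)$ without nonzero $H$-invariant vectors. That contradicts relative property (T)${}_c$. Hence the normalised $p_H\xi_i$ are almost invariant for $\sigma\in\mathcal{R}_c(G/H)$, and property (T)${}_c$ of $G/H$ finishes. Your forward direction is fine as written.

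The paper instead obtains the statement as the case $K=G$ of Lemma \ref{Lem_(G/H,K/H)} and works entirely with Kazhdan projections. The forward direction transports the Kazhdan projection through the contractions $T_H$ and $S_{K,H}$ (Lemmas \ref{Lem_T_H} and \ref{Lem_S_K,H}). For the converse, it refines your splitting into two pieces:
\begin{itemize}
\item $\mathcal{H}_1=p_G\mathcal{H}_\pi\oplus(1-p_H)\mathcal{H}_\pi$, on which $\pi$ lies in $\mathcal{R}_c(G,H)$;
\item $\mathcal{H}_2=p_H(1-p_G)\mathcal{H}_\pi$, which factors through $G/H$.
\end{itemize}
It then deduces the norm estimate \eqref{est_norm_A_c}. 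Finally, it shows that probability measures converging to the invariant mean simultaneously in $\tilde{A}_c(G,H)$ and, after applying $T_H$, in $\tilde{A}_c(G/H)$ are Cauchy in $\tilde{A}_c(G)$.

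Your vector argument is shorter, avoids the simultaneous approximation, and extends verbatim to Lemma \ref{Lem_(G/H,K/H)}. The paper's formulation is chosen because it is phrased purely in terms of invariant means, with Question \ref{Ques_rel_T*} in mind.

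Your projection-level alternative can also be made rigorous, but only at the level of approximants. If $\tilde\mu_n$ are probability measures on $G$ with $T_H\tilde\mu_n=\mu_n$, the splitting gives $\|\pi(\rho_n*\tilde\mu_n)-p_G\|\leq c(c+1)\|\rho_n-P\|_{\tilde{A}_c(G,H)}+c^2\|\mu_n-P_{G/H}\|_{\tilde{A}_c(G/H)}$ uniformly in $\pi\in\mathcal{R}_c(G)$. This yields a Kazhdan projection for $G$. The product $P\cdot(P_{G/H}\circ q)$ itself, however, does not define an element of $\tilde{A}_c(G)$ as written. $P$ is only defined on $\mathcal{R}_c(G,H)$, and $P_{G/H}$ only on representations trivial on $H$.
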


The proof of Proposition \ref{Prop_G/H_(G,H)} that we present here is also motivated by Question \ref{Ques_rel_T*}, as we expect that, if one can give a good definition of relative property (T${}^*$), then it should also satisfy a similar characterisation.

We also study property (T)${}_c$ for some families of pairs where $G$ is a semidirect product.

Our second result concerns abelian groups. Let $A$ be a locally compact abelian group and $G$ a semidirect product of $A$ by another locally compact group $H$. It is well known that the usual unitary relative property (T) for the pair $(G,A)$ can be linked with invariant means on the Pontryagin dual $\hat{A}$ of $A$. It was proven by Shalom \cite[Theorem 5.5]{Sha} that if the only $H$-invariant mean on $\hat{A}$ is the Dirac $\delta_1$, then $(G,A)$ has relative property $(T)$. In \cite{CorTes, Ioa}, a complete characterisation of relative property $(T)$ for $(G,A)$ is given in terms of $H$-invariant means on $\hat{A}$ (see Theorem \ref{thm:cortes} for the precise statement). We extend this characterisation to the uniformly bounded case and obtain the following theorem.

\begin{thm}\label{thm:semidirectab}
     Let $G=H\ltimes A$ be a locally compact group with $A$ closed abelian normal subgroup. Then $(G,A)$ has relative property (T) if and only if $(G,A)$ has relative property (T${}_{ub}$).
\end{thm}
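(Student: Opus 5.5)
The implication from relative property (T${}_{ub}$) to relative property (T) is immediate: unitary representations are exactly the elements of $\mathcal{R}_1(G)$, so relative (T)${}_1$ is the classical relative property (T). The work is in the converse. I plan to reduce it to the characterisation of Cornulier--Tessera and Ioana (Theorem \ref{thm:cortes}). Roughly, that result says $(G,A)$ has relative property (T) if and only if there is no $H$-invariant mean on the Borel sets of $\hat A$ that gives mass $1$ to every neighbourhood of the trivial character $1$ and mass $0$ to $\{1\}$.

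So fix $c\geq 1$ and suppose $\pi\in\mathcal{R}_c(G)$ has almost invariant unit vectors $\xi_n$ but $\mathcal{H}_\pi^A=\{0\}$; I want to produce such a mean, which gives a contradiction. The first step is to unitarise the restriction. Since $A$ is abelian, hence amenable, Dixmier's theorem gives a bounded positive invertible $S$, with $\|S\|\|S^{-1}\|\leq c^2$, such that $\pi|_A$ is unitary for the inner product $\langle x,y\rangle'=\langle Sx,y\rangle$. One can take $S$ to be an invariant-mean average of $\pi(a)^*\pi(a)$ over $a\in A$. The SNAG theorem then gives a projection-valued measure $E$ on $\hat A$, orthogonal for $\langle\cdot,\cdot\rangle'$, with $E(\{1\})=0$ because there are no $A$-invariant vectors.

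Next I use the $H$-action. For $h\in H$, the operator $\pi(h)$ intertwines $\pi|_A$ with its twist by $h$, so uniqueness of the spectral measure gives the covariance
\begin{align*}
E(h\cdot B)=\pi(h)E(B)\pi(h)^{-1}.
\end{align*}
This holds even though $\pi(h)$ is not $\langle\cdot,\cdot\rangle'$-unitary, because $E$ is determined by the algebra generated by $\pi(A)$. Almost invariance of $\xi_n$ under a compact generating neighbourhood in $A$ forces $E(V)\xi_n\to\xi_n$ for every neighbourhood $V$ of $1$. This is the standard Fourier-side estimate, carried out in the equivalent norm $\|\cdot\|'$.

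The main obstacle is building an $H$-invariant positive mean, since $\pi(h)$ does not preserve $\langle\cdot,\cdot\rangle'$. The natural candidate $m_n(B)=\|E(B)\xi_n\|'^2=\langle E(B)\xi_n,S\xi_n\rangle$ is positive. Using covariance,
\begin{align*}
m_n(h\cdot B)=\langle E(B)\pi(h)^{-1}\xi_n,\ \pi(h)^*S\xi_n\rangle.
\end{align*}
So invariance in the limit needs $\pi(h)^{-1}\xi_n\approx\xi_n$, which we have, together with $\pi^*(h^{-1})S\xi_n\approx S\xi_n$, which is not automatic. My first attempt is to choose the almost invariant vectors more cleverly. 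I would apply Dixmier's averaging to the whole of $G$ along the almost invariant sequence, or pass to an ultraproduct, where $\pi$ induces a representation with a genuine invariant vector $\xi$. There I would replace $S$ by an average that is invariant enough under $H$ to make $S\xi$ almost $\pi^*$-invariant.

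If that fails, I would drop positivity first. The complex functionals $B\mapsto\langle E(B)\xi_n,\eta_n\rangle$, with $\eta_n$ almost invariant for $\pi^*$, are uniformly bounded by a constant depending on $c$ and become asymptotically $H$-invariant. Almost invariant vectors for $\pi^*$ can be obtained from those of $\pi$ via the duality in Theorem \ref{Thm_char_(T)c_intro}. Their weak* limit is an $H$-invariant finitely additive measure, concentrated at $1$ and vanishing on $\{1\}$. I would then take its total variation, or its positive part, to obtain an invariant mean. This contradicts Theorem \ref{thm:cortes}, so $\pi|_A$ has a nonzero invariant vector. Since $c$ was arbitrary, $(G,A)$ has relative property (T${}_{ub}$).
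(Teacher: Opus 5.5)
Your fallback route is, in outline, the paper's proof. The paper also gives up positivity and works with the complex measures $\mu_{Q,\delta}(B)=\langle T^{-1}E(B)T\xi,\xi^*\rangle$. It gets approximate $H$-invariance from the almost invariance of \emph{both} $\xi$ and $\xi^*$, which is exactly the obstruction you correctly identified for $\|E(B)\xi_n\|'^2$. It then normalises the positive part of the real part to contradict Theorem \ref{thm:cortes}.

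The gap is in the step that carries the weight. You need vectors $\eta_n$ that are almost invariant for the contragredient \emph{and} satisfy $|\langle\xi_n,\eta_n\rangle|\geq\kappa>0$; almost invariance of $\eta_n$ alone does not suffice. For instance, take $\pi=\pi_0\oplus\pi_0$ with the unitarising operator chosen diagonally, $\xi_n$ in the first summand and $\eta_n$ in the second. Then $\langle E(B)\xi_n,\eta_n\rangle=0$ for every $B$, the weak* limit is $0$, and there is nothing to normalise. So your claim that the limit is ``concentrated at $1$'' fails. You never impose this coupling, and Theorem \ref{Thm_char_(T)c_intro} cannot supply it. That result concerns Kazhdan projections and weak*-continuity of the invariant mean for pairs that \emph{have} relative property (T)$_c$. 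It says nothing about producing almost invariant vectors of $\pi^*$ that pair nontrivially with those of $\pi$.

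The paper fills this hole with the renorming of Remark \ref{rk:renorm}. For $\|\cdot\|_\rho$, the representation $\rho$ is isometric and the space is uniformly smooth with $r(\tau)\leq c^2\tau^2/2$. One then takes $\eta=\xi^*$, the image of $\xi$ under the duality mapping, so $\langle\xi,\xi^*\rangle=1$. Hence $\mu_{Q,\delta}(\hat{A})=1$, and the positive part has mass at least $1$. Remark \ref{rmk:contrag} and Lemma \ref{lem:duality} turn $(Q,\delta)$-invariance of $\xi$ into $\sup_{g\in Q}\|\rho^*(g)\xi^*-\xi^*\|_{\rho^*}<4c^3\delta$.

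With this in hand you also do not need a weak* limit. Normalising $|\nu_n|$ or $(\Re\nu_n)^+$ directly gives a net satisfying (P1)--(P3), with (P3) uniform on compact subsets of $H$ as Theorem \ref{thm:cortes} requires. A merely pointwise $H$-invariant mean would need an extra smoothing argument to recover that uniformity when $H$ is not discrete.

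Your first attempt does not work as sketched. Averaging $\pi(h)^*S\pi(h)$ over $H$ (or over $G$) requires amenability, and this fails in every interesting case: if $H$ is amenable then so is $G$, and relative property (T) then forces $A$ to be compact. In an ultraproduct, the limit vector is $A$-invariant, so its spectral measure is $\delta_1$. Any construction made at that level therefore loses the information $E(\{1\})=0$ that you need for (P2).
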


One of the key points is that, when restricted to $A$, uniformly bounded representations are unitarisable as $A$ is amenable. However, this is far from true on $G$ itself, forcing the use of \textit{complex} measures instead of merely probability measures.

We can in fact extend this result by replacing $A$ with a locally compact nilpotent group $N$. To achieve this, we adapt ideas from \cite{CWMS} to the context of uniformly bounded representations and prove the following criterion, extending \cite[Theorem 1.5]{CWMS} to $c>1$. Recall that $[H,H]$ denotes the commutator subgroup of $H$.

\begin{thm}\label{thm:quotab_intro}
    Let $G$ be a second countable locally compact group, $H$ a closed normal subgroup of $G$, $A$ a closed abelian subgroup of $H$, normal in $G$ and $c\geq 1$. If $(G/A,H/A)$ has relative property (T)${}_{c^2}$ and every $H$-invariant finite measure on $\hat{A}$ is supported on the set of fixed points of the action of $H$, then $(G,\overline{[H,H]})$ has relative property (T)${}_c$.
\end{thm}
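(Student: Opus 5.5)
We argue by contradiction through the spectral theory of $A$, following the scheme of \cite[Theorem 1.5]{CWMS}. Let $\pi\in\mathcal{R}_c(G)$ have almost invariant unit vectors $(\xi_n)$. Since $A$ is abelian, hence amenable, $\pi|_A$ is unitarisable. Averaging the inner product over $A$ with an invariant mean gives an equivalent inner product, within a factor $c^2$, for which $\pi|_A$ is unitary. Stone's theorem then gives a projection-valued measure $E$ on $\hat{A}$ with $\pi(a)=\int_{\hat A}\chi(a)\,dE(\chi)$. Since $A$ is normal in $G$, we have $\pi(g)E(B)\pi(g)^{-1}=E(g\cdot B)$ for $g\in G$ and Borel $B\subseteq\hat{A}$. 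In particular, for any $G$-invariant Borel set $B$, the range of $E(B)$ is $\pi(G)$-invariant. Because $H$ is normal in $G$, the fixed-point set $\hat{A}^H$ is such a set.

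\textbf{Step 1 (localising the mass).} Set $\mu_n(B)=\langle E(B)\xi_n,\xi_n\rangle'$, using the averaged inner product. These are probability measures on $\hat A$ up to constants depending on $c$. Almost invariance of $\xi_n$ under a compact generating set of $H$ gives $\|h\cdot\mu_n-\mu_n\|\to0$ uniformly on compacta of $H$. Second countability lets us pass to a limit in a metrisable compactification (or use the argument of \cite{CWMS} directly) and obtain a finite $H$-invariant measure. By hypothesis this measure is supported on $\hat{A}^H$. From this we want to conclude that $\liminf_n\|E(\hat{A}^H)\xi_n\|>0$. Then $\eta_n=E(\hat{A}^H)\xi_n$, suitably normalised, are almost invariant vectors for the restricted representation $\sigma$ of $G$ on $\mathcal{K}=E(\hat{A}^H)\mathcal{H}_\pi$, and $|\sigma|\le c^3$ at worst. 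This step, making the weak limit argument rigorous so that mass on $\hat{A}^H$ really survives, is where I expect most of the measure-theoretic work. I would first try to copy the corresponding lemma in \cite{CWMS} verbatim, working with the unitarised inner product.

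\textbf{Step 2 (killing $A$ via a tensor trick; here $c^2$ appears).} On $\mathcal{K}$, for $h\in H$ and $a\in A$ we have $\sigma(hah^{-1})=\sigma(a)$, because every character in the support is $H$-fixed. Consider the representation $\sigma\otimes\bar\sigma$ of $G$ on the Hilbert–Schmidt operators $S_2(\mathcal{K})$, given by $T\mapsto\sigma(g)T\sigma(g)^{-1}$. Equivalently, use $\pi\otimes\overline{\pi^*}$, so that the bound is $c^2$ and no worse. The vectors $\eta_n\otimes\bar\eta_n$ are almost invariant. The subspace of $A$-invariant operators, namely those commuting with $E$, is $G$-invariant, and the projection onto it is obtained by averaging over the amenable group $A$. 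This projection does not destroy almost invariance, because $\eta_n\otimes\bar\eta_n$ is close to being $A$-invariant: the characters cancel along the diagonal. We thereby obtain a representation of $G/A$ in $\mathcal{R}_{c^2}(G/A)$ with almost invariant vectors. Relative property (T)${}_{c^2}$ of $(G/A,H/A)$ then yields a nonzero Hilbert–Schmidt operator $T$ on $\mathcal{K}$ commuting with $\sigma(H)$.

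\textbf{Step 3 (from an intertwiner to $[H,H]$-invariance).} Using $T^*T$, or its analogue after unitarising $\sigma|_H$ on a suitable subspace, compact operator theory produces a nonzero finite-dimensional $\sigma(H)$-invariant subspace $V$. The operators $\sigma(a)$ act on $V$ through $H$-fixed characters, so they commute with $\sigma(H)$. Hence $\sigma(H)|_V$ is a bounded group of matrices and is therefore unitarisable. We then need to extract a nonzero vector fixed by $\overline{[H,H]}$. The natural route is to pass back to $\pi$ and apply the same argument to $\pi\otimes\overline{\pi^*}$ restricted to $H$, or to use a determinant/one-dimensional-subrepresentation argument as in \cite{CWMS}. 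This final extraction, together with checking that the constants stay at $c$ for $\pi$ and $c^2$ for the quotient, is the second delicate point. Since we are arguing by contradiction, if $\pi|_{\overline{[H,H]}}$ had no invariant vector, the finite-dimensional piece obtained above would contradict this. This proves relative property (T)${}_c$ for $(G,\overline{[H,H]})$.
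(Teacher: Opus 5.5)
Your proposal has genuine gaps in all three steps; the root problem is in Step 1.

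\textbf{Step 1.} Its conclusion cannot be derived from what it uses. The hypothesis concerns only countably additive finite $H$-invariant measures on $\hat{A}$. Limits of almost invariant probability measures may lose all their mass, or be merely finitely additive means. Even weak convergence to a measure carried by the closed set $\hat{A}^H$ gives no lower bound on $\mu_n(\hat{A}^H)$.

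Here is a concrete obstruction. Let $G=H$ be the $ax+b$ group and $A=\R$. Every finite dilation-invariant measure on $\hat{A}\cong\R$ is a multiple of $\delta_0$, so the hypothesis holds. The regular representation has almost invariant vectors, yet its spectral measure on $\hat{A}$ is absolutely continuous, so $E(\hat{A}^H)=E(\{0\})=0$. Hence the lower bound on $\|E(\hat{A}^H)\xi_n\|$ genuinely requires the (T)${}_{c^2}$ hypothesis on $(G/A,H/A)$, which you only invoke afterwards.

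There is a second problem in Step 1: $\langle E(\cdot)\xi_n,\xi_n\rangle'$ need not be almost $H$-invariant. The operators $\pi(h)$ are not unitary for $\langle\cdot,\cdot\rangle'$, and $\xi_n$ need not be almost invariant for the adjoint representation. Section~\ref{sec:ab} handles this by pairing $\xi$ with its duality image $\xi^*$ for an isometric uniformly smooth renorming.

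The paper reverses your order. It first applies (T)${}_{c^2}$ to the fibred tensor product $\pi\otimes_{\hat{A}}\pi^*$, obtaining a genuinely $H$-invariant vector $f'_H$ close to $f''$. Only then does it apply the measure hypothesis, to the honest finite $H$-invariant measure given by the fibrewise norm of $f'_H$. Closeness then shows that $f$ itself lives essentially on $\hat{A}^H$.

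\textbf{Step 2.} An $A$-invariant Hilbert--Schmidt operator on $\mathcal{K}$ commutes with $E$. If $E$ has no atoms, the only such operator is $0$, since a compact operator commuting with a diffuse abelian von Neumann algebra vanishes. So averaging over $A$ can annihilate $\eta_n\otimes\bar\eta_n$; without a spectral gap, projecting onto invariant vectors does not preserve almost invariance.

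Moreover, $\eta_n\otimes\bar\eta_n$ is not almost invariant when $\sigma$ is not unitary, because the second factor transforms under the contragredient. This is why the paper tensors fibre by fibre over $\hat{A}$, so that $A$ acts trivially from the outset and $|\pi'|\le c^2$. It also uses $f(\chi)\otimes J_\chi(f(\chi))$ rather than $f\otimes\bar f$. Here $J_\chi$ is the duality map of fibrewise renormings that make the cocycle isometric, and uniform smoothness controls the error (Proposition~\ref{prop:tensorinv}).

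\textbf{Step 3.} A nonzero compact intertwiner of the non-unitary $\sigma|_H$ may be nilpotent. $T^*T$ is no longer an intertwiner, and $H$ need not be unitarisable. Even a finite-dimensional $H$-invariant subspace need not contain $\overline{[H,H]}$-fixed vectors; consider a nontrivial irreducible representation of a perfect group.

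The missing idea is quantitative. After normalisation, $f'_H(\chi)$ is Hilbert--Schmidt close to the rank-one idempotent $f(\chi)\otimes J_\chi(f(\chi))$. Hence the Riesz projection of this intertwiner around $1$ has rank one and commutes with $\rho_\chi(H)$ (Proposition~\ref{prop:tensortorep}). Its range is an $H$-invariant line on which $H$ acts by a character, so $\overline{[H,H]}$ acts trivially. The resulting $\eta(\chi)$ lies within $7\|f'_H(\chi)-f''(\chi)\|$ of $f(\chi)$, which after a measurable selection forces $\eta\neq 0$.
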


This result, together with Theorem \ref{thm:semidirectab}, allows us to obtain the following.

\begin{thm}\label{thm:mainthmnil}
     Let $G=H\ltimes N$ be a locally compact second countable group with $N$ closed nilpotent normal subgroup. Then $(G,N)$ has relative property $(T)$ if and only if $(G,N)$ has relative property (T${}_{ub}$).
\end{thm}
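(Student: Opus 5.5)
The direction (T${}_{ub}$) $\Rightarrow$ (T) is immediate, since unitary representations lie in $\mathcal{R}_1(G)$. For the converse, I would argue by induction on the nilpotency class of $N$, using Theorem \ref{thm:semidirectab} as the base case and Theorem \ref{thm:quotab_intro} for the inductive step. Fix $c\geq1$. Let $Z=Z(N)$ be the centre of $N$; it is closed, abelian and characteristic in $N$, hence normal in $G$. Moreover $G/Z\cong H\ltimes (N/Z)$, and $N/Z$ is nilpotent of smaller class. If $N$ is abelian, Theorem \ref{thm:semidirectab} gives the claim directly.

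For the inductive step, I first note that relative property (T) for $(G,N)$ passes to the quotient $(G/Z,N/Z)$, because unitary representations of $G/Z$ lift to $G$ and $Z$ acts trivially. By induction, $(G/Z,N/Z)$ then has relative property (T)${}_{c'}$ for every $c'$, in particular for $c'=c^2$. Next I would check the measure hypothesis of Theorem \ref{thm:quotab_intro} with $A=Z$ and $H$ replaced by $N$. The conjugation action of $N$ on $Z$ is trivial, so every point of $\hat Z$ is fixed and the hypothesis holds vacuously. Theorem \ref{thm:quotab_intro} therefore yields relative property (T)${}_c$ for $(G,\overline{[N,N]})$.

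It remains to upgrade from $\overline{[N,N]}$ to $N$. I would work with the quotient $G/\overline{[N,N]}\cong H\ltimes N^{ab}$, where $N^{ab}=N/\overline{[N,N]}$ is abelian. Relative (T) for $(G,N)$ implies relative (T) for $(G/\overline{[N,N]},N^{ab})$, so Theorem \ref{thm:semidirectab} gives relative (T)${}_c$ for that pair.

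Then I need a transitivity step: if $K\trianglelefteq G$ with $K\subseteq N$, $(G,K)$ has (T)${}_c$, and $(G/K,N/K)$ has (T)${}_c$, then $(G,N)$ has (T)${}_c$. I expect this to be the main obstacle. Take $\pi\in\mathcal R_c(G)$ with almost invariant vectors. The subspace $\mathcal H_\pi^K$ is $G$-invariant, because $K$ is normal, and it is nonzero. I still need almost invariant vectors inside it, and this requires a projection onto $\mathcal H_\pi^K$ commuting with $G$ and bounded by a constant depending only on $c$. I would obtain it from the Kazhdan projection of Theorem \ref{Thm_char_(T)c_intro}, applied after first decomposing $\mathcal H_\pi=\mathcal H^K_\pi\oplus(\mathcal H^K_{\pi^*})^\perp$. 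This needs $\pi$ restricted to the relevant pieces to lie in $\mathcal R_c(G,K)$; alternatively one can use an invariant mean on the amenable group $K$, since $K\subseteq N$ and $N$ is amenable. The projection $Q$ is given by integration against the mean. It has norm at most $c$ and commutes with $\pi(G)$, because $K$ is normal and the mean is conjugation-invariant, which can be arranged using the $G$-action and amenability.

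Given $Q$, if $\xi_n$ are almost invariant unit vectors, then $Q\xi_n$ is almost invariant. It is also bounded below: the restriction of $\pi$ to $\mathcal H_\pi^K\oplus\ker Q$ has no almost invariant vectors on $\ker Q$, by (T)${}_c$ for $(G,K)$ and a standard argument. Hence the quotient representation on $\mathcal H_\pi^K$, which is in $\mathcal R_c(G/K)$, has almost invariant vectors, and we obtain $N$-invariant vectors. If the norm of $Q$ is only controlled by $c^2$ instead of $c$, I would fall back on using (T)${}_{c^2}$, which is available for every $c$ since the goal is (T${}_{ub}$).
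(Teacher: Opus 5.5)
Your proof is correct and has the same architecture as the paper's. You pass to $G/\overline{[N,N]}\cong H\ltimes N^{ab}$ and apply Theorem \ref{thm:semidirectab}; you get $(G,\overline{[N,N]})$ from Theorem \ref{thm:quotab_intro} applied to a central subgroup of $N$ (Remark \ref{rmk:central}) plus induction on the class; then you glue. There are two differences.

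\begin{itemize}
\item \emph{How the induction is organised.} You induct directly on the semidirect-product statement using $Z(N)$. The paper instead proves the auxiliary Theorem \ref{thm:nilpotent}: for any closed nilpotent normal $N$, $(G,N)$ has (T${}_{ub}$) iff $(G/\overline{[N,N]},N^{ab})$ does. It inducts using the last term $N_n$ of the lower central series. That version never needs the splitting in the inductive step and uses it only once, for the abelianisation. Your version is also fine, since $G/Z(N)\cong H\ltimes N/Z(N)$.
\item \emph{The gluing step.} Your ``transitivity step'' is exactly the ``if'' direction of Lemma \ref{Lem_(G/H,K/H)}, with your $K,N$ playing the roles of the paper's $H,K$. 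The paper proves it with Kazhdan projections: the estimate \eqref{est_norm_A_c} shows that probability measures approximating the invariant mean are Cauchy in $\tilde{A}_c(G,K)$. Your argument is the classical one, which the paper notes (before Proposition \ref{Prop_G/H_(G,H)}) carries over to $c\geq 1$. It is shorter and more elementary. The paper's version is phrased via invariant means so that it might transfer to a relative (T${}^*$) (Question \ref{Ques_rel_T*}).
\end{itemize}

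You should tighten the construction of $Q$; neither the Kazhdan projection nor amenability of $K$ is needed. Take $Q$ to be the projection onto $\mathcal{H}_\pi^K$ along $\left(\mathcal{H}_{\pi^*}^K\right)^\perp$ from \eqref{dec_H-inv}. It exists for every $\pi\in\mathcal{R}_c(G)$. It has norm at most $c$ by Lemma \ref{Lem_char_p_pi} applied to $\pi|_K$. It commutes with $\pi(G)$ because both summands are $G$-invariant when $K$ is normal.

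Your stated justification does not work. You claim a left-invariant mean on $K$ can be made conjugation-invariant ``using the $G$-action and amenability''. At the level of $L^\infty(K)$ this is false in general. For $\mathbb{Z}^2$ in $\operatorname{SL}_2(\mathbb{Z})\ltimes\mathbb{Z}^2$, such a mean would make $\operatorname{SL}_2(\mathbb{Z})$ co-amenable, contradicting relative property (T) for that pair. What is true, and suffices, is this: matrix coefficients are weakly almost periodic, and the invariant mean on $\operatorname{WAP}(K)$ is unique. Hence it is automatically invariant under the automorphisms of $K$ induced by conjugation.

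Finally, the fallback to (T)${}_{c^2}$ is unnecessary. The subrepresentation on $\mathcal{H}_\pi^K$ is bounded by $c$ whatever $\|Q\|$ is; $\|Q\|$ only enters the almost-invariance constants.
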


As a by-product, we obtain the following generalisation of a theorem of Serre \cite[Theorem 1.7.11]{BHV} to the uniformly bounded setting. Recall that $Z(G)$ denotes the centre of $G$.

\begin{cor}\label{Cor_Serre_intro}
    Let $G$ be a locally compact second countable group, $A\leq Z(G)$ a closed subgroup, and $c\geq 1$. If $G/A$ has property (T)${}_{c^2}$ and $G^{ab}=G/\overline{[G,G]}$ is compact, then $G$ has property (T)${}_c$.
\end{cor}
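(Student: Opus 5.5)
We apply Theorem \ref{thm:quotab_intro} with $H=G$ itself (a closed normal subgroup of $G$) and the given central subgroup $A\leq Z(G)$, which is closed, abelian and normal in $G$. The first hypothesis of Theorem \ref{thm:quotab_intro} asks that the pair $(G/A,G/A)$ have relative property (T)${}_{c^2}$. By the remark following Theorem \ref{Thm_char_(T)c_intro}, this is exactly property (T)${}_{c^2}$ for $G/A$, which we assume.

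For the second hypothesis, note that since $A$ is central, the conjugation action of $G$ on $A$ is trivial. Hence the dual action of $G$ on $\hat{A}$ is trivial as well. Every point of $\hat{A}$ is then a fixed point, so every $G$-invariant finite measure on $\hat{A}$ is trivially supported on the fixed point set. Theorem \ref{thm:quotab_intro} therefore yields that $(G,\overline{[G,G]})$ has relative property (T)${}_c$.

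It remains to pass from the relative property to property (T)${}_c$ for $G$. We use Proposition \ref{Prop_G/H_(G,H)} with the closed normal subgroup $\overline{[G,G]}$. By that proposition, it suffices to show that $G^{ab}=G/\overline{[G,G]}$ has property (T)${}_c$. Since $G^{ab}$ is compact, this should follow from the standard averaging argument. Let $\pi\in\mathcal{R}_c(G^{ab})$ have almost invariant vectors: for a unit vector $\xi$ with $\sup_{s\in G^{ab}}\|\pi(s)\xi-\xi\|<\varepsilon$, set $\eta=\int_{G^{ab}}\pi(s)\xi\,ds$, using normalised Haar measure. Then $\eta$ is invariant and $\|\eta-\xi\|\le\varepsilon$, so $\eta\neq0$ once $\varepsilon<1$. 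Almost invariance only needs to hold on a compact set, and $G^{ab}$ is itself compact, so the argument applies. This shows that compact groups have (T)${}_c$ for every $c$; if the paper records this fact among its preliminaries, we cite it instead.

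The only point needing care is this final step. We must check that the direction "$G/H$ has (T)${}_c$ and $(G,H)$ has relative (T)${}_c$ $\Rightarrow$ $G$ has (T)${}_c$" of Proposition \ref{Prop_G/H_(G,H)} is used with the same constant $c$, and that $\overline{[G,G]}$ satisfies the hypotheses there; being closed and normal, it does. The second countability assumption is inherited by $G$ exactly as required in Theorem \ref{thm:quotab_intro}, so no further work is needed.
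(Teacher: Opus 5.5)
Your proposal is correct and follows the paper's proof exactly. You apply Theorem \ref{thm:quotab_intro} with $H=G$, where the support hypothesis is automatic because $A$ is central (Remark \ref{rmk:central}), to get relative property (T)${}_c$ for $(G,\overline{[G,G]})$, and then combine this with property (T)${}_c$ of the compact quotient $G^{ab}$ via Proposition \ref{Prop_G/H_(G,H)}. Your averaging argument supplies the detail the paper leaves implicit for compact groups, and you correctly name the pair as $(G,\overline{[G,G]})$ where the paper's text writes $(G,G^{ab})$.
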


This paper is organised as follows. In Section \ref{sec:prelim}, we gather all the preliminaries needed in the proofs of our main results. Section \ref{sec:charct} is devoted to relative property (T)${}_c$ and the proofs of Theorem \ref{Thm_char_(T)c_intro} and Proposition \ref{Prop_G/H_(G,H)}. In Section \ref{sec:ab}, we focus on pairs of the form $(H\ltimes A, A)$, where $A$ is abelian, and we prove Theorem \ref{thm:semidirectab}. Finally, in Section \ref{sec:nil}, we extend these ideas to nilpotent groups, and prove Theorem \ref{thm:quotab_intro}, Theorem \ref{thm:mainthmnil}, and Corollary \ref{Cor_Serre_intro}.

\section{Preliminaries}\label{sec:prelim}

In this section, we gather some preliminaries that will be needed throughout the paper.

\subsection{Relative property \texorpdfstring{(T)${}_c$}{(T)c} and Kazhdan pairs}
We begin by discussing relative property (T)${}_c$ in more detail. Let $G$ be a locally compact group, $c\geq 1$, and $\pi\in\mathcal{R}_c(G)$. Recall that $\mathcal{H}_\pi^G$ denotes the subspace of $G$-invariant vectors of $\mathcal{H}_\pi$. We have the following direct sum decomposition:
\begin{align}\label{dec_G-inv}
	\mathcal{H}_\pi=\mathcal{H}_\pi^G\oplus \left(\mathcal{H}_{\pi^*}^G\right)^\perp,
\end{align}
where $\pi^*$ denotes the adjoint representation $\pi^*(s)=\pi(s^{-1})^*$, and $\left(\mathcal{H}_{\pi^*}^G\right)^\perp$ is the orthogonal complement of the subspace of $\pi^*$-invariant vectors $\mathcal{H}_{\pi^*}^G$; see \cite[\S 2.3]{DruNow}. Moreover, in the decomposition \eqref{dec_G-inv}, both subspaces are $G$-invariant. 

Let now $H$ be a closed subgroup of $G$. By restricting $\pi$ to $H$, we get
\begin{align}\label{dec_H-inv}
	\mathcal{H}_\pi=\mathcal{H}_\pi^H\oplus \left(\mathcal{H}_{\pi^*}^H\right)^\perp,
\end{align}
as in \eqref{dec_G-inv}. Observe that, when $H$ is a normal subgroup, these subspaces are $G$-invariant too. Let $Q$ be a compact subset of $G$, and $\varepsilon>0$. We say that $(Q,\varepsilon)$ is a Kazhdan pair for $(\mathcal{R}_c(G),H)$ if, for every $\pi\in\mathcal{R}_c(G)$ with $\mathcal{H}_{\pi}^H=\{0\}$ and every $\xi\in \mathcal{H}_{\pi}$,
\begin{align}\label{spec_gap}
	\max_{s\in Q}\|\pi(s)\xi-\xi\|\geq \varepsilon\|\xi\|.
\end{align}
We will say that the pair $(G,H)$ has property (T)${}_c$ if $(\mathcal{R}_c(G),H)$ admits a Kazhdan pair. If, in addition, we assume that $G$ is compactly generated, then this condition is equivalent to the fact that, for every compact generating subset $Q\subseteq G$, there is $\varepsilon>0$ such that $(Q,\varepsilon)$ is a Kazhdan pair for $(\mathcal{R}_c(G),H)$; see e.g. \cite[Proposition 1.1.6]{Jau}.

Observe that, when $H=G$, we recover the definition of property (T)${}_c$ from \cite{Ver}.

\subsection{The class of representations \texorpdfstring{$\mathcal{R}_c(G,H)$}{Rc(G,H)}}
We now explore in more detail the definition of $\mathcal{R}_c(G,H)$, as well as the reasons behind this choice of subclass of representations. Let $G$ be a group with property (T)${}_c$, and let $\pi\in\mathcal{R}_c(G)$. Then the decomposition \eqref{dec_G-inv} is given by a subspace on which the $G$-action is trivial ($\mathcal{H}_\pi^G$) and another one that witnesses the spectral gap condition ($\left(\mathcal{H}_{\pi^*}^G\right)^\perp$). Moreover, property (T)${}_c$ can be characterised in terms of the projection onto $\mathcal{H}_\pi^G$ along $\left(\mathcal{H}_{\pi^*}^G\right)^\perp$; see \cite[Corollary 3.4]{Ver}.

Let now $H$ be a closed subgroup of $G$. The definition of relative property (T)${}_c$ relates the subspace on which the $H$-action is trivial ($\mathcal{H}_\pi^H$) with the subspace that witnesses the spectral gap condition ($\left(\mathcal{H}_{\pi^*}^G\right)^\perp$). Hence, our characterisation should be expressed in terms of a projection onto $\mathcal{H}_\pi^H$ along $\left(\mathcal{H}_{\pi^*}^G\right)^\perp$. In other words, we would like to have a decomposition of the form
\begin{align*}
	\mathcal{H}_\pi=\mathcal{H}_\pi^H\oplus \left(\mathcal{H}_{\pi^*}^G\right)^\perp.
\end{align*}
From the decompositions \eqref{dec_G-inv}, \eqref{dec_H-inv}, and the inclusions $\mathcal{H}_\pi^G\subseteq \mathcal{H}_\pi^H$, $\left(\mathcal{H}_{\pi^*}^H\right)^\perp\subseteq\left(\mathcal{H}_{\pi^*}^G\right)^\perp$, we see that this can only happen if $\mathcal{H}_\pi^G=\mathcal{H}_\pi^H$, which in turn implies that $\left(\mathcal{H}_{\pi^*}^G\right)^\perp=\left(\mathcal{H}_{\pi^*}^H\right)^\perp$.

Therefore, we define $\mathcal{R}_c(G,H)$ as the subclass of $\mathcal{R}_c(G)$ consisting of all representations $\pi\in\mathcal{R}_c(G)$ such that $\mathcal{H}_\pi^G=\mathcal{H}_\pi^H$. In other words, for $\pi\in\mathcal{R}_c(G,H)$, the decompositions \eqref{dec_G-inv} and \eqref{dec_H-inv} coincide. Observe also that $\mathcal{R}_c(G,G)=\mathcal{R}_c(G)$.

\subsection{Spaces of matrix coefficients}\label{Ssec_matrix_coef}

For a locally compact group $G$, let $B_c(G)$ denote the space of matrix coefficients of representations in $\mathcal{R}_c(G)$. More precisely, for every $\varphi\in B_c(G)$, there are $\pi\in\mathcal{R}_c(G)$ and $\xi,\eta\in\mathcal{H}_\pi$ such that, for all $s\in G$,
\begin{align}\label{phi_mcoef}
	\varphi(s)=\langle\pi(s)\xi,\eta\rangle.
\end{align}
We endow $B_c(G)$ with the norm
\begin{align*}
	\|\varphi\|_{B_c(G)}=\inf \|\xi\| \|\eta\|,
\end{align*}
where the infimum is taken over all decompositions as in \eqref{phi_mcoef}. As shown in \cite[Proposition 2.10]{Ver}, $B_c(G)$ is a dual Banach space. We define $\tilde{A}_c(G)$ as the completion of $L^1(G)$ for the norm
\begin{align*}
	\|f\|_{\tilde{A}_c(G)}=\sup_{\pi\in\mathcal{R}_c(G)}\|\pi(f)\|,
\end{align*}
where
\begin{align*}
	\pi(f)=\int_Gf(s)\pi(s)\,ds\ \in\ \mathbf{B}(\mathcal{H}_\pi).
\end{align*}
Then $B_c(G)$ can be identified with the dual space of $\tilde{A}_c(G)$ for the usual $(L^\infty,L^1)$ duality pairing. Now, for a closed subgroup $H$ of $G$, we define the following seminorm on $L^1(G)$:
\begin{align}\label{norm_A_c(G,H)}
	\|f\|_{\tilde{A}_c(G,H)}=\sup_{\pi\in\mathcal{R}_c(G,H)}\|\pi(f)\|.
\end{align}
This will not be a norm in general. For instance, when $H=\{e\}$ is the trivial subgroup, $\mathcal{R}_c(G,H)$ contains only multiples of the trivial representation, and
\begin{align*}
	\|f\|_{\tilde{A}_c(G,\{e\})}=\left|\int_G f(s)\,ds\right|.
\end{align*}
We define $\tilde{A}_c(G,H)$ as the completion of $L^1(G)/N_H$ for the quotient norm induced by \eqref{norm_A_c(G,H)}, where
\begin{align*}
	N_H=\{f\in L^1(G)\ \mid\ 	\|f\|_{\tilde{A}_c(G,H)}=0\}.
\end{align*}
We also define $B_c(G,H)$ as the dual Banach space $\tilde{A}_c(G,H)^*$. Observe that the identity map $L^1(G)\to L^1(G)$ extends to a contraction $S_H:\tilde{A}_c(G)\to \tilde{A}_c(G,H)$ with dense range. Hence the adjoint map $S_H^*:B_c(G,H)\to B_c(G)$ is a contractive inclusion. Therefore we may view $B_c(G,H)$ as a subspace of $B_c(G)$. Moreover, $B_c(G,H)$ contains all matrix coefficients of representations in $\mathcal{R}_c(G,H)$; see Lemma \ref{Lem_Bc_dense}.

Since $B_c(G)$ is a translation-invariant subspace of $\operatorname{WAP}(G)$ that contains the constant functions and is closed under complex conjugation, it carries a unique $G$-invariant mean; see \cite[\S 2.2]{Ver} for details. Therefore, the same holds for $B_c(G,H)$.

\subsection{Isometric representations on uniformly convex Banach spaces}
If $E$ is a Banach space, let $S_E$ be its unit sphere, $B_E$ its unit ball, and $\mathbf{O}(E)$ the group of all linear invertible isometries.
\begin{defi}\label{def:superref}
Let $(E,\Vert\cdot\Vert)$ be a Banach space and $B_E$ its unit ball. Let \begin{equation}
    \label{eq:modulusconvexity} d_{\Vert \cdot\Vert}(\varepsilon)=\inf \left\lbrace 1-\frac{\Vert \xi+\eta\Vert}{2} \mid \xi,\eta\in B_E, \ \Vert \xi-\eta\Vert \geq \varepsilon\right\rbrace
\end{equation}and \begin{equation}
    \label{eq:modulussmoothness} r_{\Vert \cdot\Vert}(\tau)=\sup \left\lbrace \frac{\Vert\xi+\eta\Vert+\Vert \xi-\eta\Vert}{2}-1 \mid \xi\in B_E, \ \Vert \eta\Vert \leq \tau\right\rbrace.
\end{equation}
If there is no ambiguity, we may denote $d(\varepsilon)$ and $r(\tau)$. The functions $d$ and $r$ are called the \textit{modulus of convexity} and \textit{modulus of smoothness} respectively.

\begin{enumerate}
    \item The space $E$ is said to be uniformly convex (or uc) if for any $0<\varepsilon<2$, $d(\varepsilon)>0$.
    \item The space $E$ is said to be uniformly smooth (or us) if $\underset{\tau\to 0}{\lim} r(\tau)/\tau=0$.
    \item The space $E$ is said to be ucus if it is both uc and us.
    \item The space $E$ is said to be super-reflexive if it admits a compatible ucus norm.
\end{enumerate}
\end{defi}

The following result was proved in \cite[Proposition 2.3]{BFGM}.
\begin{prop}[Bader--Furman--Gelander--Monod]\label{prop:ubtoisom}
Let $E$ be a super-reflexive Banach space and let $\rho$ be a uniformly bounded representation on $E$. Then there exists a compatible ucus norm $\Vert\cdot\Vert$ on $E$ such that $\rho:G\to \mathbf{O}(E,\Vert \cdot\Vert)$ is an isometric representation.
\end{prop}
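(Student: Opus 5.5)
We would deduce the statement from Asplund's averaging technique, combined with a step that makes the norm $\rho$-invariant. The one ingredient that needs genuine work is showing that uniform convexity (and uniform smoothness) survives the passage to a limit of norms.

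Fix a compatible ucus norm $|\cdot|$ on $E$ and set $C=\sup_{g}\|\rho(g)\|<\infty$. The naive choice $N(x)=\sup_{g\in G}|\rho(g)x|$ is $\rho$-invariant and satisfies $|x|\le N(x)\le C|x|$. It need not be uniformly convex, however. Given $x,y$ with $N(x),N(y)\le 1$ and $N(x-y)\ge\varepsilon$, uniform convexity of $|\cdot|$ only helps at those $g$ where $|\rho(g)(x-y)|$ is large. It gives nothing at the $g$ where the supremum defining $N\big(\tfrac{x+y}{2}\big)$ is nearly attained.

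To repair this, we run an invariant version of Asplund averaging. Put $N_0=N$ and $M_0=|\cdot|$, and define recursively
\begin{align*}
M_{k+1}(x)=\Big(\tfrac12\big(M_k(x)^2+N_k(x)^2\big)\Big)^{1/2},\qquad N_{k+1}(x)=\sup_{g\in G}M_{k+1}(\rho(g)x).
\end{align*}
Every $N_k$ is $\rho$-invariant, and all the norms lie between $|\cdot|$ and $C|\cdot|$. The squared gap $\sup_{|x|\le 1}\big(N_k(x)^2-M_k(x)^2\big)$ shrinks geometrically, by Asplund's computation, so $N_k$ and $M_k$ converge uniformly on bounded sets to a common norm $\|\cdot\|$. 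This limit is $\rho$-invariant and compatible with the original norm.

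Uniform convexity of $\|\cdot\|$ should follow from the following fact. Since $M_{k+1}^2\ge\tfrac12 M_0^2$ up to the averaging weights, each $M_k^2$ contains a fixed positive multiple $2^{-k}|\cdot|^2$ plus a convex remainder. Asplund's original argument turns the geometric convergence rate against this shrinking weight into a uniform lower bound on the modulus of convexity of the limit, at least after passing to the squared norms.

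For uniform smoothness we would dualise. Run the same scheme on $E^*$ with the contragredient representation $g\mapsto\rho(g^{-1})^*$, which is also uniformly bounded by $C$, and use the dual ucus norm of $|\cdot|$. This produces an invariant uniformly convex dual norm. Uniform smoothness of $E$ then follows from uniform convexity of $E^*$ by the standard duality between moduli (here reflexivity is used).

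To get both properties in a single norm, we would interleave the two procedures: at each step, replace the current norm by the predual of the averaged dual norm. This amounts to a sup/inf Asplund averaging, which keeps both moduli under control.

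The main obstacle is the uniform convexity of the limit norm. In the non-invariant Asplund argument one averages two fixed norms; here the supremum over $G$ is reapplied at each stage. We would therefore need to check carefully that taking $\sup_g$ of translates of a norm does not destroy the quantitative convexity estimate inherited from the $M_k$. The point is that $N_{k+1}$ and $M_{k+1}$ are already close, so the loss is of order of their gap, and that gap tends to $0$.
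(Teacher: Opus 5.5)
\textbf{Verdict: genuine gaps.} Your overall plan is sound: an invariant uniformly convex norm, an invariant uniformly smooth norm obtained by dualising, then Asplund averaging. The paper does not reprove the statement; it cites \cite{BFGM}. Where it needs the two properties separately, it uses exactly the sup norm \eqref{norm_E_pi} for uniform convexity and the predual of the sup norm on the dual for uniform smoothness (Remark~\ref{rk:renorm}). But the step you single out as the obstacle is not one, and the step that actually needs an idea is left open.

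\textbf{The sup norm is already uniformly convex.} Let $d$ be the modulus of convexity of $|\cdot|$. Suppose $N(x),N(y)\le 1$ and $N(x-y)\ge\varepsilon$. For \emph{every} $g$, the vectors $\rho(g)x,\rho(g)y$ lie in the $|\cdot|$-unit ball, and $|\rho(g)(x-y)|\ge |x-y|/C\ge N(x-y)/C^2\ge\varepsilon/C^2$. Hence $|\rho(g)\tfrac{x+y}{2}|\le 1-d(\varepsilon/C^2)$, and taking the supremum gives $d_N(\varepsilon)\ge d(\varepsilon/C^2)$. Your objection misses that the lower bound on $|\rho(g)(x-y)|$ is uniform in $g$.

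\textbf{Your recursion does nothing.} By invariance of $N_k$,
\[
N_{k+1}(x)^2=\tfrac12 N_k(x)^2+\tfrac12\sup_g M_k(\rho(g)x)^2=N_k(x)^2 .
\]
So $N_k=N$ for all $k$, $M_k^2=2^{-k}|\cdot|^2+(1-2^{-k})N^2$, and the limit is exactly the norm you set out to replace. Your convexity argument for the limit also fails as sketched. The weight $2^{-k}$ on $|\cdot|^2$ and the gap $N^2-M_k^2=2^{-k}(N^2-|\cdot|^2)$ decay at the same rate, so Asplund's trade-off yields no positive lower bound.

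\textbf{The missing step: one norm that is both.} The nontrivial point is to produce a \emph{single} invariant norm that is both uniformly convex and uniformly smooth. ``Interleaving'' as you describe it does not do this. Taking $\sup_g$ on $E$ or on $E^*$ fixes every norm that is already invariant, so after one step the alternation is idle. Also, a supremum of smooth norms is in general not smooth.

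What you need is to apply Asplund's averaging to the two invariant norms:
\begin{itemize}
\item $\|\cdot\|_1=N$, which is uniformly convex;
\item $\|\cdot\|_2$, the predual of $\phi\mapsto\sup_g|\rho^*(g)\phi|_*$, which is uniformly smooth.
\end{itemize}
Start from $\tfrac12\|\cdot\|_1^2$ and $\tfrac12\|\cdot\|_2^2$, rescaled so that one dominates the other. Then alternate arithmetic means $\tfrac12(f+g)$ and dual means $\bigl(\tfrac12(f^*+g^*)\bigr)^*$.

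The observation missing from your proposal is that both operations commute with every linear isometry. Fenchel conjugation turns $\rho$-invariance into $\rho^*$-invariance and back. Hence all iterates and their limit are $\rho$-invariant. Asplund's theorem then guarantees that the limit inherits uniform convexity from one input and uniform smoothness from the other.
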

\smallskip
\begin{rmk}\label{Rmk_[H]_c}
In particular, denote $[\mathcal{H}]$ the class of all Banach spaces with a compatible Hilbert norm. Also, denote $[\mathcal{H}]_c$ the subclass of all spaces with a norm ratio $\leq c$, that is to say with $$\max\left(\frac{\Vert x\Vert_H}{\Vert x\Vert_B},\frac{\Vert x\Vert_B}{\Vert x\Vert_H}\right)\leq c$$where $\Vert\cdot\Vert_B,\Vert\cdot\Vert_H$ are the original and the Hilbert norm respectively. 

Then the above construction shows that if $\rho$ is a uniformly bounded representation on a Hilbert space $\mathcal{H}$, it can be viewed as an isometric representation on a space in $[\mathcal{H}]$, and vice-versa - and we can check that if $|\rho|\leq c$, we can take this space in $[\mathcal{H}]_c$.  In particular property (T)${}_c$ is equivalent to property $(T_{[\mathcal{H}]_c})$ in the sense of \cite{BFGM}, and property (T${}_{ub}$) is equivalent to property $(T_{\mathcal{[H]}})$.
\end{rmk}

If $E$ is a Banach space, let $\langle \cdot, \cdot \rangle:E\times E^*\to \C$ denote the duality bracket of $E$.
\begin{defi}\label{def:duality}
Let $E$ be a us Banach space and $S_E$ its unit sphere. Then for $\xi\in S_E$, there exists a unique element $\xi^*\in S_{E^*}$ such that $\langle \xi,\xi^*\rangle =1$. The map $\xi\mapsto \xi^*$ is called the duality mapping.
\end{defi}
The duality mapping can be extended to all of $E$ by multiplication with a real number; however, it does not possess a linearity property in general.

The following result can be found in \cite[Proposition A.5]{BenLin}.
\begin{lem}\label{lem:duality}
Let $E$ be a uniformly smooth Banach space. Let $0<\varepsilon<2$, then for any $\xi,\eta\in S_E$ with $\Vert\xi-\eta\Vert \leq \varepsilon$, we have $$\Vert \xi^*-\eta^*\Vert \leq 2 \frac{r(2\varepsilon)}{\varepsilon}.$$In particular, the duality mapping is uniformly continuous.
\end{lem}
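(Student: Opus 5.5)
We prove the estimate directly from the definition of the modulus of smoothness $r$, by testing $\xi^*-\eta^*$ against an arbitrary unit vector. Since $\|\xi^*-\eta^*\|=\sup_{z\in S_E}\operatorname{Re}\langle z,\xi^*-\eta^*\rangle$ (in the complex case, rotate $z$ by a unimodular scalar), it suffices to fix $z\in S_E$ and bound $\operatorname{Re}\langle z,\xi^*-\eta^*\rangle$ from above by $2r(2\varepsilon)/\varepsilon$.

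\textbf{Step 1: a lower bound.} Fix $\tau>0$. Since $\|\xi^*\|=\|\eta^*\|=1$, $\langle\xi,\xi^*\rangle=1$ and $\langle\eta,\eta^*\rangle=1$, we have
\begin{align*}
\|\xi+\tau z\|+\|\eta-\tau z\|\ \geq\ \operatorname{Re}\langle \xi+\tau z,\xi^*\rangle+\operatorname{Re}\langle \eta-\tau z,\eta^*\rangle
= 2+\tau\operatorname{Re}\langle z,\xi^*-\eta^*\rangle .
\end{align*}

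\textbf{Step 2: an upper bound.} Write $x=(\xi+\eta)/2$ and $y=(\xi-\eta)/2$. Then $\|x\|\leq 1$ and $\|y\|\leq\varepsilon/2$, and
\begin{align*}
\xi+\tau z=x+(y+\tau z),\qquad \eta-\tau z=x-(y+\tau z).
\end{align*}
Moreover $\|y+\tau z\|\leq \varepsilon/2+\tau$. By the definition \eqref{eq:modulussmoothness} of $r$,
\begin{align*}
\|\xi+\tau z\|+\|\eta-\tau z\|\leq 2\bigl(1+r(\varepsilon/2+\tau)\bigr).
\end{align*}

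\textbf{Step 3: conclusion.} Combining Steps 1 and 2 gives
\begin{align*}
\operatorname{Re}\langle z,\xi^*-\eta^*\rangle\leq \frac{2\,r(\varepsilon/2+\tau)}{\tau}.
\end{align*}
Choose $\tau=3\varepsilon/2$, so that $\varepsilon/2+\tau=2\varepsilon$. This yields the bound $\frac{4r(2\varepsilon)}{3\varepsilon}\leq \frac{2r(2\varepsilon)}{\varepsilon}$. Taking the supremum over $z\in S_E$ gives the claimed inequality.

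Uniform continuity of the duality mapping then follows because uniform smoothness means $r(2\varepsilon)/\varepsilon\to0$ as $\varepsilon\to0$.

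The only point needing care is the choice of the test functionals in Step 1. One must pair $\xi+\tau z$ with $\xi^*$ and $\eta-\tau z$ with $\eta^*$, so that the terms $\langle \xi,\xi^*\rangle$ and $\langle \eta,\eta^*\rangle$ are exactly $1$ and the difference $\xi^*-\eta^*$ appears with the right sign. After that, the argument is just the symmetric decomposition around the midpoint $x$ together with the definition of $r$.
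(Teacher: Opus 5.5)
Your proof is correct. The paper gives no argument for this lemma; it only quotes it from \cite[Proposition A.5]{BenLin}. So your write-up is a self-contained replacement for that citation rather than a variant of a proof in the paper.

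Your route is elementary. You test $\xi^*-\eta^*$ against a unit vector $z$, using the norming property of $\xi^*$ on $\xi+\tau z$ and of $\eta^*$ on $\eta-\tau z$. You then bound the same sum from above by writing both vectors symmetrically about the midpoint $x=(\xi+\eta)/2$. This works directly because the paper's definition \eqref{eq:modulussmoothness} takes the base point in the closed ball $B_E$ and allows $\Vert\eta\Vert\leq\tau$. Hence $x$ and $y+\tau z$ are admissible with no further comment. With a sphere-based definition you would first need to note, by convexity, that the two suprema agree.

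Your version also gives slightly more than the statement:
\begin{itemize}
\item The choice $\tau=3\varepsilon/2$ yields the constant $\tfrac{4}{3}$ instead of $2$. The final comparison needs $r\geq 0$, which you use implicitly; it follows by taking $\eta=0$ in \eqref{eq:modulussmoothness}.
\item The hypothesis $\varepsilon<2$ is never used.
\item Your estimate holds for \emph{any} norming functionals $\xi^*,\eta^*$. Taking $\xi=\eta$ and letting $\tau\to 0$ shows that two norming functionals of the same unit vector coincide. So the argument also recovers the uniqueness of the duality mapping assumed in Definition \ref{def:duality}.
\end{itemize}
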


\begin{defi}\label{def:contragredient}
    Let $\rho$ be a representation of $G$ on a Banach space $E$. We define the contragredient representation $\rho^*$ by \[\forall g\in G,\xi\in E,\phi\in E^*,\quad \langle \xi,\rho^*(g)\phi\rangle= \langle \rho(g^{-1})\xi,\phi\rangle.\]
\end{defi}

\begin{rmk}\label{rmk:contrag}
    If $\rho$ is an isometric representation on a Banach space $E$, so is $\rho^*$. Furthermore, if $E$ is uniformly smooth, then $(\rho(g)\xi)^*=\rho^*(g)\xi^*$ for any $\xi\in S_E$. Indeed, $\Vert \rho^*(g)\xi^*\Vert=1$ and $\langle \rho(g)\xi,\rho^*(g)\xi^*\rangle=\langle \xi,\xi^*\rangle=1$ which proves the claim by uniqueness.
\end{rmk}

\begin{rmk}\label{rk:renorm} A Hilbert space $(\mathcal{H},\Vert\cdot\Vert)$ is ucus with \[d(\varepsilon)=1-\sqrt{1-\frac{\varepsilon^2}{2}}\quad \textrm{  and  }\quad r(\tau)=\sqrt{1+\tau^2}-1.\]
    We point out that in Proposition \ref{prop:ubtoisom}, if $\rho$ is a representation on the Hilbert space $(\mathcal{H},\Vert\cdot\Vert)$ with $\vert\rho\vert\leq c$, one can take $\Vert \cdot\Vert_\rho$ to be the (pre)dual norm of $\Vert\phi\Vert_{\rho^*}=\underset{g}{\sup}\Vert \phi\circ \rho(g)\Vert$. Then $\Vert\cdot\Vert_\rho$ is equivalent to $\Vert\cdot \Vert$ with norm ratio at most $c$, and $(\mathcal{H},\Vert\cdot\Vert_\rho)$ is uniformly smooth, with $$r_{\Vert\cdot\Vert_\rho}(\tau)\leq \frac{c^2\tau^2}{2}.$$When dealing with a specific group, this knowledge can help give precise Kazhdan constants in property (T)${}_c$ (see, for instance, \cite{Mim}). This specific norm will also be useful in Sections \ref{sec:ab} and \ref{sec:nil}.
\end{rmk}

\subsection{Complex measures}\label{sec:measures} 
Let $(X,\Sigma)$ be a measurable space. A complex measure is a map $\mu:\Sigma\to \C$ such that for any sequence $(A_n)_{n\in \N}\subset \Sigma$ of disjoint sets, $\mu(\bigcup A_n)=\sum_{n\geq 0} \mu(A_n)$ (see \cite[Ch. 6]{Rud}). Notice that this implies the absolute convergence of $\sum \mu(A_n)$. We define the total variation measure $\vert \mu\vert$ by $$\vert\mu\vert(A)=\sup \left\lbrace \sum \vert \mu(A_k)\vert \mid (A_k) \textrm{ partition of }A\right\rbrace.$$Then $\vert \mu\vert$ is a finite positive measure and is the smallest positive measure $\nu$ such that for all $A\in \Sigma$, $\vert \mu(A)\vert\leq \nu(A)$. It is easy to show that $\vert \mu_1+\mu_2\vert\leq \vert \mu_1\vert+\vert \mu_2\vert$ so $$\left\vert \vert\mu_1\vert-\vert\mu_2\vert \right\vert\leq \vert \mu_1-\mu_2\vert.$$ Furthermore, $\mu$ has a polar decomposition: there exists a measurable map $\theta:X\to \R$ such that $$d\mu=e^{i\theta}d\vert \mu\vert.$$
We define a norm on the space of all complex measures by $$\Vert \mu\Vert_{TV}=\vert \mu\vert (X).$$

We can decompose $\mu=\Re \mu+i\Im \mu$ with $\Re \mu,\Im \mu$ real signed measures. We can then consider the Hahn--Jordan decompositions $\Re \mu=\mu_{r}^+-\mu_{r}^-$ and $\Im \mu=\mu_{i}^+-\mu_{i}^-$. Later, we will often denote $\mu^+=\mu_{r}^+$. The maps $\mu\mapsto \Re \mu,\mu\mapsto \Im \mu$ are $\R$-linear and of norm $1$. Furthermore, for a signed measure $\sigma$, $\sigma_{+}=\frac{\vert \sigma\vert+\sigma}{2}$ so that $\sigma\mapsto \sigma_+$ is $1$-Lipschitz. Thus, the map $\mu\mapsto \mu_r^+$ is $1$-Lipschitz, meaning that for any $\mu,\nu$, \begin{equation}
    \label{eq:lippospart} \vert \mu_r^+-\nu_r^+\vert \leq \vert \mu-\nu\vert.
\end{equation}

Let $\Tilde{\mu}=\mu_{r}^++\mu_{r}^-+\mu_{i}^++\mu_{i}^-=\vert\Re \mu\vert+\vert\Im \mu\vert=(\vert \cos \theta\vert+\vert \sin \theta\vert)\vert\mu\vert$. Then for any $A\in \Sigma$, \begin{equation}\label{eq:totalvarineq}
    \frac{1}{\sqrt{2}}\Tilde{\mu}(A) \leq \vert \mu\vert(A)\leq \Tilde{\mu}(A).
\end{equation}
For any $A\in \Sigma$, we have $\vert\Re \mu(A)-\Re \nu(A)\vert \leq \vert \mu(A)-\nu(A)\vert$, same with $\Im \mu,\Im \nu$. Furthermore, let $\sigma=\Re \mu-\Re \nu$ and $X=P\cup N$ the Hahn decomposition of the signed measure $\sigma$. Then $\vert\sigma\vert(X)=\vert \sigma(P)\vert+\vert\sigma(N)\vert$. Thus we have $\Vert \Re \mu-\Re \nu\Vert_{TV}\leq 2\sup \vert \Re \mu(A)-\Re \nu(A)\vert$ so that \begin{equation}\label{eq:supbound}
    \Vert \mu_+-\nu_+\Vert_{TV} \leq \Vert \Re \mu-\Re \nu\Vert_{TV}\leq 2\sup \vert \mu(A)-\nu(A)\vert. 
\end{equation}

Assume now that $X$ is locally compact and $\Sigma$ is its Borel $\sigma$-algebra. Let $M(X)$ be the space of complex measures $\mu$ such that $\vert\mu\vert$ is regular. Then $(M(X),\Vert\cdot\Vert_{TV})$ is naturally isometric to $C_0(X)^*$ through integration against $\mu$.

Let also $\mathcal{L}^\infty(X)$ be the space of all bounded measurable functions on $X$ with the supremum norm. Then $\mathcal{L}^\infty(X)^*$ is identified with the space of \textit{finitely additive} finite complex measures. A mean is a functional $m\in \mathcal{L}^\infty(X)^*$ that is positive and such that $m(1)=1$.

\subsection{Harmonic analysis on locally compact abelian groups}
Let $A$ be a locally compact abelian group and $\hat{A}$ its Pontryagin dual, that is to say, the group of characters on $A$. Let $\mathcal{B}(\hat{A})$ denote the Borel $\sigma$-algebra on $\hat{A}$. Let $\pi:A\to \mathbf{U}(\mathcal{H}_\pi)$ be a unitary representation of $A$. The SNAG theorem \cite[Section D.3]{BHV} yields a projection-valued measure $E:\mathcal{B}(\hat{A})\to \mathbf{B}(\mathcal{H}_\pi)$. Given a bounded Borel function $f$, we construct an operator \[\pi(f)=\int_{\hat{A}} f(\chi)\ dE(\chi)\in \mathbf{B}(\mathcal{H}_\pi).\]
In particular, with $f=\mathrm{ev}_a$ the evaluation map $\chi\mapsto \chi(a)$, we get that for any $a\in A$, \[\pi(a)=\int_{\hat{A}}\chi(a) dE(\chi).\]

The notation $\pi(f)$ is justified by the following point of view: one can define a representation $\pi:L^1(A)\to \mathbf{B}(\mathcal{H}_\pi)$ by $f\mapsto \int_A f(a)\pi(a)\ da$. This representation extends to a representation of the reduced $C^*$-algebra $C^*_r(A)$. But since $A$ is abelian, the Fourier transform implements an isomorphism $C^*_r(A)\simeq C_0(\hat{A})$, and the representation of $C_0(\hat{A})$ induced by $\pi$ by composition with this isomorphism is exactly the $*$-homomorphism $\pi$ given by the projection-valued measure $E$.\medskip

The projection-valued measure allows us to relate objects defined on $A$ to measures on $\hat{A}$. This fact is at the heart of the proof of the following theorem of Cornulier and Tessera \cite{CorTes}, which gives a complete characterisation of property (T) for semidirect products of $A$. If a topological group $G$ acts continuously on $A$ by group automorphisms, then $G$ also acts on the Pontryagin dual $\hat{A}$ by precomposition: for any $\chi\in \hat{A}$, $g\in G$ and $a\in A$, $(g\chi)(a)=\chi(g^{-1}a)$. Then $G$ acts on Borel measures on $\hat{A}$ as well. If $\mu$ is a Borel measure on $\hat{A}$, the measure $g_*\mu$ is the pushforward of $\mu$ by the homeomorphism $\chi\mapsto g\chi$ of $\hat{A}$.
\begin{thm}[Cornulier--Tessera]\label{thm:cortes} Let $G=H\ltimes A$ be a locally compact group with $A$ abelian. Let $\hat{A}$ be the Pontryagin dual of $A$ and $1\in \hat{A}$ the trivial character of $A$. The following are equivalent:
\begin{enumerate}
    \item[($\neg T$)] The pair $(G,A)$ does not have relative property (T)
    \item[$(P)$] There exists a net of Borel probability measures $(\mu_i)$ on $\hat{A}$ such that
    \begin{enumerate}
        \item[$(P1)$] $\mu_i$ converges in weak* topology to the Dirac mass $\delta_1$,
        \item[$(P2)$] $\mu_i(\{1\})=0$,
        \item[$(P3)$] $\Vert h_*\mu_i-\mu_i\Vert_{TV}\to 0$ uniformly on compact subsets of $H$.
    \end{enumerate}
\end{enumerate}
Furthermore, if $A$ is $\sigma$-compact, $(P1)$ is equivalent to \[(P1')\quad \int_{\hat{A}}\chi(a)\ d\mu_i(\chi)\to 1 \textrm{ uniformly on compact subsets of }A.\]
\end{thm}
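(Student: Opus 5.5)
The plan is to prove the two implications separately. The direction $(\neg T)\Rightarrow(P)$ comes from the spectral measure of the restriction to $A$. The direction $(P)\Rightarrow(\neg T)$ comes from an explicit unitary representation of $G$ on a space of ``half-densities'' on $\hat{A}$. The supplementary statement about $(P1')$ is handled at the end.

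\emph{From $(\neg T)$ to $(P)$.} Let $\pi$ be a unitary representation of $G$ with unit almost invariant vectors $(\xi_i)$ and $\mathcal{H}_\pi^A=\{0\}$. Let $E$ be the projection-valued measure of $\pi|_A$ given by the SNAG theorem, and put $\mu_i(B)=\langle E(B)\xi_i,\xi_i\rangle$. These are Borel probability measures on $\hat{A}$.
\begin{itemize}
\item[$(P2)$] The range of $E(\{1\})$ is exactly $\mathcal{H}^A_\pi=\{0\}$, so $\mu_i(\{1\})=0$.
\item[$(P3)$] From $\pi(h)\pi(a)\pi(h)^{-1}=\pi(hah^{-1})$ and uniqueness of the spectral measure we get the covariance $\pi(h)E(B)\pi(h)^{-1}=E(hB)$. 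Hence $h_*\mu_i$ is the spectral measure of the vector $\pi(h)\xi_i$. Then
\[
|h_*\mu_i(B)-\mu_i(B)|\le 2\|\pi(h)\xi_i-\xi_i\|,
\]
and \eqref{eq:supbound}, applied to the real part and the imaginary part, gives $\|h_*\mu_i-\mu_i\|_{TV}\le 4\|\pi(h)\xi_i-\xi_i\|$. This tends to $0$ uniformly on compacta.
\item[$(P1)$] We have $\int\chi(a)\,d\mu_i=\langle\pi(a)\xi_i,\xi_i\rangle\to1$ uniformly on compact subsets of $A$. Hence $\int \Re(1-\chi(a))\,d\mu_i\to 0$ uniformly for $a\in K$. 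Basic neighbourhoods of $1$ in $\hat{A}$ have the form $\{\chi:\sup_{a\in K}|\chi(a)-1|<\delta\}$. A Chebyshev-type estimate, after averaging over a suitable compact set as in the Lévy continuity theorem, then shows that $\mu_i$ of the complement tends to $0$. This is weak* convergence to $\delta_1$.
\end{itemize}

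\emph{From $(P)$ to $(\neg T)$.} Let $\mathcal{H}$ be the Hilbert space of half-densities on $\hat{A}$: formal expressions $f\sqrt{\nu}$ with $\nu$ a finite Borel measure and $f\in L^2(\nu)$, identified via Radon--Nikodym derivatives. Let $\mathcal{H}_0$ be the closed subspace of half-densities carrying no mass at $1$.
\begin{itemize}
\item $A$ acts by multiplication, $\pi(a)(f\sqrt\nu)=\mathrm{ev}_a f\sqrt\nu$.
\item $H$ acts by transport, $\pi(h)(f\sqrt{\nu})=(f\circ h^{-1})\sqrt{h_*\nu}$.
\item These two actions combine to a unitary representation of $G=H\ltimes A$. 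Continuity needs some care; one can restrict to the separable piece generated by the countably many relevant measures.
\end{itemize}
Since $h\cdot 1=1$ for all $h\in H$, the subspace $\mathcal{H}_0$ is $G$-invariant. An $A$-invariant vector must be supported on $\{\chi:\chi(a)=1\ \forall a\}=\{1\}$, so $\mathcal{H}_0^A=\{0\}$.

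By $(P2)$, the vectors $\xi_i=\sqrt{\mu_i}$ lie in $\mathcal{H}_0$. We check they are almost $G$-invariant:
\begin{itemize}
\item For $h\in H$, the Powers--St{\o}rmer-type inequality $\|\sqrt{\alpha}-\sqrt{\beta}\|^2\le\|\alpha-\beta\|_{TV}$ together with $(P3)$ gives $\|\pi(h)\xi_i-\xi_i\|^2\le\|h_*\mu_i-\mu_i\|_{TV}\to0$, uniformly on compacta.
\item For $a\in A$, we have $\|\pi(a)\xi_i-\xi_i\|^2=2\Re\int(1-\chi(a))\,d\mu_i$. This tends to $0$ uniformly on compact subsets of $A$: by $(P1)$ the mass of $\mu_i$ concentrates near $1$, and the family $\{\chi\mapsto\chi(a):a\in K\}$ is equicontinuous at $1$.
\item Compact subsets of $G$ lie in products $K_HK_A$, so the two estimates combine.
\end{itemize}
Thus $\pi|_{\mathcal{H}_0}$ has almost invariant vectors but no nonzero $A$-invariant vector, so $(G,A)$ fails relative property (T).

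\emph{The $(P1')$ statement and the main obstacle.} The forward implication $(P1)\Rightarrow(P1')$ is exactly the equicontinuity estimate just used. For the converse, I would use $\sigma$-compactness of $A$ to write the topology of $\hat{A}$ as compact-open convergence on an exhausting sequence of compact sets. Then an averaging argument against $L^1(A)$ functions converts uniform convergence of the Fourier transforms on compacta into concentration of the mass of $\mu_i$ near $1$.

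I expect the main obstacle to be this interplay between weak* convergence and uniform convergence of $\int\chi(a)\,d\mu_i$ on compacta, especially the averaging step. The second delicate point is making the half-density representation rigorous and strongly continuous, and I would handle it by working inside $L^2(\hat{A},\nu)$ for one quasi-invariant dominating measure $\nu$, when such a measure is available.
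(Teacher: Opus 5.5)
The paper does not prove this theorem. It quotes it from Cornulier--Tessera and Ioana and uses it as a black box in the proof of Theorem~\ref{thm:semidirectab}, so your proposal has to stand on its own. Your architecture is the right one: spectral measures of almost invariant vectors for $(\neg T)\Rightarrow(P)$, and a Koopman-type representation on half-densities for the converse. The direction $(\neg T)\Rightarrow(P)$ is fine.

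The step you single out as the main obstacle is actually routine. For $f\in L^1(A)$ one has $\int_{\hat A}\hat f\,d\mu_i=\int_A f(a)\overline{\hat\mu_i(a)}\,da$, where $\hat\mu_i(a)=\int\chi(a)\,d\mu_i(\chi)$. This tends to $\hat f(1)$ once $\hat\mu_i\to 1$ uniformly on compacta, using $|\hat\mu_i|\le 1$. Since $\widehat{L^1(A)}$ is dense in $C_0(\hat A)$ (Stone--Weierstrass) and the net is bounded, this gives (P1) with no Chebyshev/L\'evy argument and no $\sigma$-compactness. Conversely, (P1) gives $\mu_i(V)\to 1$ for every open $V\ni 1$ by Urysohn, which is exactly what your equicontinuity step needs.

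The genuine gap is strong continuity in $(P)\Rightarrow(\neg T)$. Relative property (T) concerns continuous unitary representations, but your half-density representation is only a homomorphism of the abstract group. Indeed $\pi(h)\sqrt{\nu}=\sqrt{h_*\nu}$, and $\|\sqrt{h_*\nu}-\sqrt{\nu}\|^2=2$ whenever $h_*\nu\perp\nu$. This happens, for example, for $\nu=\delta_\chi$ when $h\chi\neq\chi$ for $h$ arbitrarily close to $e$. Hypothesis (P3) controls $\sup_{h\in K}\|h_*\mu_i-\mu_i\|_{TV}$ only asymptotically in $i$. For fixed $i$, the map $h\mapsto h_*\mu_i$ need not be continuous, so $\sqrt{\mu_i}$ need not be a continuous vector.

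Neither of your remedies repairs this. The net is uncountable in general, and separability is unrelated to continuity: the vectors $\sqrt{\delta_{h\chi}}$ are pairwise orthogonal along an uncountable orbit. Moreover, a quasi-invariant measure dominating the $\mu_i$ need not exist, since an atom with uncountable $H$-orbit cannot be dominated by any $\sigma$-finite quasi-invariant measure.

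The missing idea is to smooth along $H$.
\begin{itemize}
\item Fix a continuous, compactly supported probability density $\phi$ on $H$, and set $\mu_i^\phi=\int_H h_*\mu_i\,\phi(h)\,dh$.
\item Then $\mu_i^\phi(\{1\})=0$, because $1$ is $H$-fixed.
\item By the Powers--St{\o}rmer inequality and (P3), $\|\sqrt{\mu_i^\phi}-\sqrt{\mu_i}\|^2\le\|\mu_i^\phi-\mu_i\|_{TV}\le\sup_{h\in\operatorname{supp}\phi}\|h_*\mu_i-\mu_i\|_{TV}\to 0$. So the vectors $\sqrt{\mu_i^\phi}$ are still almost invariant and still lie in $\mathcal{H}_0$.
\item By left invariance of Haar measure, $\|k_*\mu_i^\phi-\mu_i^\phi\|_{TV}\le\|\phi(k^{-1}\,\cdot)-\phi\|_{L^1(H)}$. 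Hence each $\sqrt{\mu_i^\phi}$ is an $H$-continuous vector.
\end{itemize}
The $H$-continuous vectors form a closed subspace, since they are closed under uniform limits. This subspace is $G$-invariant, using $\pi(h)\pi(a)=\pi(hah^{-1})\pi(h)$ and the fact that half-densities of Radon measures are $A$-continuous. On it, $\pi$ is a genuinely continuous unitary representation of $G$. Intersect this subspace with $\mathcal{H}_0$ and run your almost-invariance argument there.
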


When $A$ is an amenable group (in particular, when $A$ is abelian), it is well known that any uniformly bounded representation of $A$ is unitarisable, i.e. if $\pi$ is a uniformly bounded representation on $\mathcal{H}_\pi$, then there exists an invertible operator $T\in \mathbf{B}(\mathcal{H}_\pi)$ such that $\pi':a\mapsto T\circ \pi(a)\circ T^{-1}$ is unitary. Furthermore, $\Vert T\Vert \Vert T^{-1}\Vert\leq |\pi|^2$ \cite{Day, Dix}.

\begin{rmk}
    If any uniformly bounded representation of a group $G$ is unitarisable, we say that $G$ is unitarisable. The above result, proved independently by Day \cite{Day}, Dixmier \cite{Dix}, and Nakamura--Takeda \cite{NakTak}, says that all amenable groups are unitarisable. The converse is a long-standing open question of Dixmier: is every unitarisable group amenable? See \cite{Pis} for an introduction to this question.
\end{rmk}

\section{Characterisation of relative property \texorpdfstring{(T)${}_c$}{(T)c}}\label{sec:charct}

In this section, we prove Theorem \ref{Thm_char_(T)c_intro} and Proposition \ref{Prop_G/H_(G,H)}.

\subsection{Proof of Theorem \ref{Thm_char_(T)c_intro}}
The following characterisation of the unique invariant mean on $B_c(G)$ will be useful.

\begin{lem}\label{Lem_char_p_pi}
	Let $G$ be a locally compact group, $c\geq 1$, and let $m$ denote the unique invariant mean on $B_c(G)$. For every $\pi\in\mathcal{R}_c(G)$, the projection onto $\mathcal{H}_\pi^G$ along $\left(\mathcal{H}_{\pi^*}^G\right)^\perp$ is the unique operator $p_\pi\in\mathbf{B}(\mathcal{H}_\pi)$ satisfying
	\begin{align*}
		\langle p_\pi\xi,\eta\rangle=m(s\mapsto\langle\pi(s)\xi,\eta\rangle)
	\end{align*}
	for all $\xi,\eta\in\mathcal{H}_\pi$. As a consequence, $\|p_\pi\|\leq c$.
\end{lem}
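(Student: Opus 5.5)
Fix $\pi\in\mathcal{R}_c(G)$ and $\xi,\eta\in\mathcal{H}_\pi$. The coefficient $\varphi_{\xi,\eta}(s)=\langle\pi(s)\xi,\eta\rangle$ lies in $B_c(G)$, with $\|\varphi_{\xi,\eta}\|_\infty\le c\|\xi\|\|\eta\|$. The map $(\xi,\eta)\mapsto m(\varphi_{\xi,\eta})$ is linear in $\xi$ and conjugate-linear in $\eta$, since $m$ is linear and $\varphi_{\xi,\eta}$ is sesquilinear in $(\xi,\eta)$. Because $m$ is a mean, $|m(\varphi)|\le\|\varphi\|_\infty$, so this form is bounded by $c\|\xi\|\|\eta\|$. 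The Riesz representation theorem for bounded sesquilinear forms then gives a unique operator $T\in\mathbf{B}(\mathcal{H}_\pi)$ with $\langle T\xi,\eta\rangle=m(\varphi_{\xi,\eta})$ and $\|T\|\le c$. It therefore suffices to show $T=p_\pi$. Uniqueness is automatic, and the norm bound follows once this identification is made.

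To show $T=p_\pi$, I will use the decomposition \eqref{dec_G-inv}, $\mathcal{H}_\pi=\mathcal{H}_\pi^G\oplus(\mathcal{H}_{\pi^*}^G)^\perp$, and check $T$ on each summand.

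\textbf{Invariant summand.} If $\xi\in\mathcal{H}_\pi^G$, then $\varphi_{\xi,\eta}$ is the constant function $\langle\xi,\eta\rangle$. Since $m(1)=1$, we get $\langle T\xi,\eta\rangle=\langle\xi,\eta\rangle$ for all $\eta$, so $T\xi=\xi$.

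\textbf{Complementary summand.} Here I need $T\xi=0$ for $\xi\in(\mathcal{H}_{\pi^*}^G)^\perp$, which I will obtain in two steps.
\begin{enumerate}
\item \emph{The range of $T$ consists of $G$-invariant vectors.} Write $\varphi_{\xi,\pi(g)^*\eta}(s)=\langle\pi(gs)\xi,\eta\rangle$, which is a left translate of $\varphi_{\xi,\eta}$. By invariance of $m$, $\langle\pi(g)T\xi,\eta\rangle=\langle T\xi,\pi(g)^*\eta\rangle=m(\varphi_{\xi,\eta})=\langle T\xi,\eta\rangle$. Hence $\pi(g)T=T$ for every $g\in G$.
\item \emph{$T$ kills the complement.} Take $\xi\in(\mathcal{H}_{\pi^*}^G)^\perp$. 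This subspace is $G$-invariant and, by the decomposition, equals the closed span of $\{\pi(g)\zeta-\zeta\}$. Right translation gives $\varphi_{\pi(g)\zeta,\eta}(s)=\varphi_{\zeta,\eta}(sg)$, so by invariance of $m$ we get $T\pi(g)=T$. Therefore $T$ vanishes on each vector $\pi(g)\zeta-\zeta$, and by continuity on their closed span.
\end{enumerate}

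The one point in the plan that needs care is the identification
\[
\left(\mathcal{H}_{\pi^*}^G\right)^\perp=\overline{\mathrm{span}}\,\{\pi(g)\zeta-\zeta \mid g\in G,\ \zeta\in\mathcal{H}_\pi\}.
\]
This is a standard annihilator computation. A vector $\eta$ is orthogonal to every $\pi(g)\zeta-\zeta$ if and only if $\pi(g)^*\eta=\eta$ for all $g\in G$. Since $\pi^*(g)=\pi(g^{-1})^*$ and $g$ ranges over all of $G$, this holds if and only if $\eta\in\mathcal{H}_{\pi^*}^G$. Taking orthogonal complements of both sides gives the displayed equality.

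Together, the two summand computations show that $T$ is the identity on $\mathcal{H}_\pi^G$ and zero on $(\mathcal{H}_{\pi^*}^G)^\perp$. So $T=p_\pi$, and the bound $\|p_\pi\|\le c$ follows from the first paragraph.
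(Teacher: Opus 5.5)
Your proof is correct, but it handles the complementary summand differently from the paper.

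\textbf{The paper's route.} The paper shows that $p_\pi$ vanishes on $\left(\mathcal{H}_{\pi^*}^G\right)^\perp$ by duality. Inversion invariance of $m$ \cite[Corollary 2.6]{Ver} gives $\langle p_\pi\xi,\eta\rangle=\langle\xi,p_{\pi^*}\eta\rangle$. Applying the ``range is invariant'' step to $\pi^*\in\mathcal{R}_c(G)$ then puts $p_{\pi^*}\eta$ in $\mathcal{H}_{\pi^*}^G$. As a byproduct, this yields $p_\pi^*=p_{\pi^*}$.

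\textbf{Your route.} You identify $\left(\mathcal{H}_{\pi^*}^G\right)^\perp$ with $\overline{\mathrm{span}}\{\pi(g)\zeta-\zeta\}$ by an annihilator computation. You then use \emph{right} invariance of $m$ to get $T\pi(g)=T$. This is more self-contained, since it needs neither the dual representation nor the auxiliary operator $p_{\pi^*}$.

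\textbf{Two small points.}
\begin{itemize}
\item The invariance the paper uses is left invariance: uniqueness in the proof of Theorem \ref{Thm_char_(T)c_intro} is checked against $s\cdot\varphi(t)=\varphi(s^{-1}t)$. So your appeal to right invariance should be justified. It follows from left invariance together with the inversion invariance of \cite[Corollary 2.6]{Ver}. Indeed, the inversion of the right translate $s\mapsto\varphi(sg)$ is the left translate $s\mapsto\check\varphi(g^{-1}s)$ of $\check\varphi(s)=\varphi(s^{-1})$. Alternatively, it is the standard fact that the unique invariant mean on weakly almost periodic functions is two-sided invariant.
\item Your step 1 ($\pi(g)T=T$) is never used. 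Once $T$ is the identity on $\mathcal{H}_\pi^G$ and zero on $\left(\mathcal{H}_{\pi^*}^G\right)^\perp$, the direct sum decomposition already forces $T=p_\pi$. In the paper the analogous step is essential, because it is applied to $\pi^*$.
\end{itemize}
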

\begin{proof}
	First observe that the formula
	\begin{align*}
		\langle p_\pi\xi,\eta\rangle=m(s\mapsto\langle\pi(s)\xi,\eta\rangle)
	\end{align*}
	gives a well defined operator $p_\pi\in\mathbf{B}(\mathcal{H}_\pi)$ because
	\begin{align*}
		|\langle p_\pi\xi,\eta\rangle|\leq\sup_{t\in G} |\langle\pi(t)\xi,\eta\rangle|\leq c\|\xi\| \|\eta\|,
	\end{align*}
	which also shows that $\|p_\pi\|\leq c$. Let us prove now that $p_\pi$ is the projection onto $\mathcal{H}_\pi^G$ along $\left(\mathcal{H}_{\pi^*}^G\right)^\perp$. First, if $\xi\in \mathcal{H}_\pi^G$, then, for all $\eta\in\mathcal{H}_{\pi}$,
	\begin{align*}
		\langle p_\pi\xi,\eta\rangle&=m(s\mapsto\langle\xi,\eta\rangle)\\
		&=\langle\xi,\eta\rangle,
	\end{align*}
	which shows that $p_\pi\xi=\xi$. On the other hand, for all $s\in G$, $\xi,\eta\in \mathcal{H}_\pi$,
	\begin{align*}
		\langle\pi(s)p_\pi\xi,\eta\rangle&=m\left(t\mapsto \langle\pi(t)\xi,\pi(s)^*\eta\rangle\right)\\
		&=m\left(t\mapsto \langle\pi(st)\xi,\eta\rangle\right)\\
		&=m\left(t\mapsto \langle\pi(t)\xi,\eta\rangle\right)\\
		&=\langle p_\pi\xi,\eta\rangle,
	\end{align*}
	which shows that $p_\pi\xi$ belongs to $\mathcal{H}_\pi^G$. Hence $p_\pi$ is a projection onto $\mathcal{H}_\pi^G$. Now let $\xi\in\left(\mathcal{H}_{\pi^*}^G\right)^\perp$. For every $\eta\in \mathcal{H}_{\pi}$,
	\begin{align*}
		\langle p_\pi\xi,\eta\rangle&=m\left(t\mapsto \langle\xi,\pi(t)^*\eta\rangle\right)\\
		&=m\left(t\mapsto \langle\xi,\pi^*(t^{-1})\eta\rangle\right)\\
		&=m\left(t\mapsto \langle\xi,\pi^*(t)\eta\rangle\right)\\
		&=\langle \xi,p_{\pi^*}\eta\rangle,
	\end{align*}
	where $p_{\pi^*}\in\mathbf{B}(\mathcal{H}_{\pi^*})$ is the projection onto $\mathcal{H}_{\pi^*}^G$ along $\left(\mathcal{H}_{\pi}^G\right)^\perp$. Here we used the fact that $m$ is inversion invariant; see \cite[Corollary 2.6]{Ver}. By the previous argument, $p_{\pi^*}\eta$ belongs to $\mathcal{H}_{\pi^*}^G$. Hence
	\begin{align*}
		\langle p_\pi\xi,\eta\rangle=0,
	\end{align*}
	and therefore $p_\pi\xi=0$. This shows that $p_\pi$ is the projection onto $\mathcal{H}_\pi^G$ along $\left(\mathcal{H}_{\pi^*}^G\right)^\perp$. 
\end{proof}

We will also need the following density result.

\begin{lem}\label{Lem_Bc_dense}
	Let $G$ be a locally compact group, $H$ a closed subgroup of $G$, and $c\geq 1$. Let $\tilde{B}_c(G,H)$ denote the subspace of $B_c(G)$ given by matrix coefficients of representations in $\mathcal{R}_c(G,H)$. Then $\tilde{B}_c(G,H)$ is a $\sigma(B_c(G,H),\tilde{A}_c(G,H))$-dense subspace of $B_c(G,H)$.
\end{lem}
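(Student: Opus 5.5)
The proof has two parts. First we show that $\tilde{B}_c(G,H)$ actually lies inside $B_c(G,H)$, viewed as a subspace of $B_c(G)$ through the inclusion $S_H^*$. Then we prove weak*-density by a Hahn--Banach/bipolar argument.

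\emph{Step 1 (inclusion).} Let $\varphi(s)=\langle\pi(s)\xi,\eta\rangle$ with $\pi\in\mathcal{R}_c(G,H)$. For $f\in L^1(G)$ we have
\begin{align*}
\left|\int_G f(s)\varphi(s)\,ds\right|=|\langle\pi(f)\xi,\eta\rangle|\leq \|\pi(f)\|\,\|\xi\|\,\|\eta\|\leq \|f\|_{\tilde{A}_c(G,H)}\|\xi\|\,\|\eta\|.
\end{align*}
Consequently the functional $f\mapsto\int f\varphi$ vanishes on $N_H$. It therefore descends to $L^1(G)/N_H$ and extends by continuity to a bounded functional on $\tilde{A}_c(G,H)$, of norm at most $\|\xi\|\|\eta\|$. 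Applying $S_H^*$ to this functional returns $\varphi$, so $\varphi\in B_c(G,H)$. The class $\mathcal{R}_c(G,H)$ is closed under direct sums with uniformly bounded norm, because a vector in $\bigoplus\mathcal{H}_{\pi_i}$ is $H$-invariant exactly when each of its components is. Hence $\tilde{B}_c(G,H)$ is a linear subspace. This is the only place where we need to check closure properties of the class.

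\emph{Step 2 (density).} Both $\tilde{B}_c(G,H)$ and $B_c(G,H)$ sit in the dual of $\tilde{A}_c(G,H)$. By the bipolar theorem it suffices to show that the pre-annihilator of $\tilde{B}_c(G,H)$ in $\tilde{A}_c(G,H)$ is $\{0\}$. Take $x\in\tilde{A}_c(G,H)$ with $\langle x,\varphi\rangle=0$ for every $\varphi\in\tilde{B}_c(G,H)$. Each $\pi\in\mathcal{R}_c(G,H)$ induces a contractive map $L^1(G)/N_H\to\mathbf{B}(\mathcal{H}_\pi)$, which extends to $\tilde{A}_c(G,H)$ and gives an operator $\pi(x)$. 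By continuity, $\langle\pi(x)\xi,\eta\rangle=\langle x,\varphi_{\xi,\eta}\rangle=0$ for all $\xi,\eta$, so $\pi(x)=0$ for every $\pi\in\mathcal{R}_c(G,H)$. Now choose $f_n\in L^1(G)$ whose classes converge to $x$. Then
\begin{align*}
\|f_n\|_{\tilde{A}_c(G,H)}=\sup_{\pi}\|\pi(f_n)\|=\sup_\pi\|\pi(f_n)-\pi(x)\|\leq\|f_n-x\|_{\tilde{A}_c(G,H)}\to0 .
\end{align*}
Since $\|x\|=\lim\|f_n\|$, we get $x=0$, and density follows.

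The main obstacle is mild. One has to make sure that the seminorm-completion construction is compatible with the representations, that is, that $\pi(x)$ is well defined and that $\|x\|=\sup_\pi\|\pi(x)\|$ on the completion. This follows by a density/continuity argument from the definition of the norm. The weak*-density itself then reduces to the standard fact that the supremum of the norms over the class defines the norm.
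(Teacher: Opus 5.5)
Your proposal is correct and follows the same route as the paper. You bound $|\langle\varphi,f\rangle|$ by $\|f\|_{\tilde{A}_c(G,H)}\|\xi\|\|\eta\|$ to get the inclusion, then show the pre-annihilator is trivial because $\pi(x)=0$ for all $\pi\in\mathcal{R}_c(G,H)$ forces $x=0$; your extra checks (closure of $\mathcal{R}_c(G,H)$ under direct sums, so that $\tilde{B}_c(G,H)$ is really a linear subspace, and the continuity argument giving $\|x\|=\sup_\pi\|\pi(x)\|$ on the completion) supply details the paper leaves implicit.
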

\begin{proof}
	Let $\varphi\in \tilde{B}_c(G,H)$ be given by $\varphi(s)=\langle\pi(s)\xi,\eta\rangle$ with $\pi\in\mathcal{R}_c(G,H)$ and $\xi,\eta\in\mathcal{H}_\pi$. For every $f\in L^1(G)$,
	\begin{align*}
		|\langle\varphi, f\rangle| &= |\langle\pi(f)\xi,\eta\rangle|\\
		&\leq \|f\|_{\tilde{A}_c(G,H)}\|\xi\| \|\eta\|.
	\end{align*}
	This shows that $\varphi$ defines an element of $\tilde{A}_c(G,H)^*=B_c(G,H)$. Therefore $\tilde{B}_c(G,H)$ is a subspace of $B_c(G,H)$. Now let $f\in \tilde{A}_c(G,H)$ such that, for every $\varphi\in\tilde{B}_c(G,H)$,
	\begin{align*}
		\langle\varphi,f\rangle=0.
	\end{align*}
	Let $\pi\in\mathcal{R}_c(G,H)$. For every $\xi,\eta\in\mathcal{H}_\pi$, define $\varphi_{\xi,\eta}\in\tilde{B}_c(G,H)$ by
	\begin{align*}
		\varphi_{\xi,\eta}(s)=\langle\pi(s)\xi,\eta\rangle.
	\end{align*}
	Then
	\begin{align*}
		0&=\langle\varphi_{\xi,\eta},f\rangle\\
		&=\langle\pi(f)\xi,\eta\rangle. 
	\end{align*}
	This shows that $\pi(f)=0$. Since this holds for every $\pi\in\mathcal{R}_c(G,H)$, we conclude that $f=0$. By the Hahn--Banach theorem, $\tilde{B}_c(G,H)$ is weak*-dense in $B_c(G,H)$.
\end{proof}

\begin{rmk}
    The definition of $\tilde{B}_c(G,H)$ in Lemma \ref{Lem_Bc_dense} might seem like a more natural generalisation of $B_c(G)$ to the relative setting than $B_c(G,H)$. One can try to adapt the proof of \cite[Proposition 2.10]{Ver} in order to show that $\tilde{A}_c(G,H)^*=\tilde{B}_c(G,H)$; however, this approach does not work because the class $\mathcal{R}_c(G,H)$ is not stable under ultraproducts. We avoid all these issues by simply defining $B_c(G,H)$ as the dual space of $\tilde{A}_c(G,H)$.
\end{rmk}

In order to characterise relative property (T)${}_c$ in terms of projections, we will translate the spectral gap condition \eqref{spec_gap} to the language of \cite{DruNow}. This will allow us to prove the equivalence (i)$\iff$(ii) in Theorem \ref{Thm_char_(T)c_intro} by applying Theorem \ref{Thm_DruNow} below. Let $G$ be a locally compact group and $c\geq 1$. For every $\pi\in\mathcal{R}_c(G)$, define a new norm on $\mathcal{H}_\pi$ by
\begin{align}\label{norm_E_pi}
	\|\xi\|_{E_\pi}=\sup_{s\in G}\|\pi(s)\xi\|_{\mathcal{H}_\pi}.
\end{align}
The Banach space $E_\pi$ thus obtained is uniformly convex, $c$-isomorphic to $\mathcal{H}_\pi$, and the representation $\pi$ is isometric on $E_\pi$; see \cite[Proposition 2.3]{BFGM} for details. Observe that $E_\pi$ belongs to the class $[\mathcal{H}]_c$; see Remark \ref{Rmk_[H]_c}. In this case, the decomposition \eqref{dec_G-inv} can be written as
\begin{align*}
	E_\pi=E_\pi^G\oplus \left(E_{\pi^*}^G\right)^\perp,
\end{align*}
where $\left(E_{\pi^*}^G\right)^\perp$ denotes the subspace of $(E_\pi)^{**}=E_\pi$ given by all functionals that vanish on the subspace of $G$-invariant vectors $E_{\pi^*}^G\subseteq (E_\pi)^{*}$; see \cite[\S 2.3]{DruNow} for details. Moreover, since the spaces $E_\pi$ and $\mathcal{H}_\pi$ are $c$-isomorphic, the spectral gap condition \eqref{spec_gap} for $\left(\mathcal{H}_{\pi^*}^G\right)^\perp$ and a pair $(Q,\varepsilon)$ is equivalent to the same condition for $\left(E_{\pi^*}^G\right)^\perp$:
\begin{align}\label{spec_gap_isom}
	\max_{s\in Q}\|\pi(s)\xi-\xi\|_{E_\pi}\geq \varepsilon'\|\xi\|_{E_\pi},
\end{align}
for the same compact set $Q\subseteq G$ and some constant $\varepsilon'>0$. More precisely, if $\pi$ satisfies \eqref{spec_gap} for $(Q,\varepsilon)$, then it satisfies \eqref{spec_gap_isom} for $(Q,\varepsilon/c)$. Conversely, if it satisfies \eqref{spec_gap_isom} for $(Q,\varepsilon')$, then it satisfies \eqref{spec_gap} for $(Q,\varepsilon'/c)$.

Let $\mathcal{F}$ be a class of isometric representations of a locally compact group $G$ on a uniformly convex family of Banach spaces $\mathcal{E}$. This means that, for every $\varepsilon\in(0,2)$,
\begin{align*}
	\inf_{E\in\mathcal{E}} d_{E}(\varepsilon)>0,
\end{align*}
where $d_{E}$ denotes the modulus of convexity of $E$, as defined in \eqref{eq:modulusconvexity}. We define $C_{\mathcal{F}}(G)$ as the completion of $L^1(G)$ (after modding out by the null space) for the norm
\begin{align*}
	\|f\|_{C_{\mathcal{F}}(G)}=\sup_{\pi\in\mathcal{F}}\|\pi(f)\|.
\end{align*}
Observe that, if $H$ is a closed subgroup of $G$, and $c\geq 1$, then
\begin{align*}
	\mathcal{F}=\left\{(E_\pi,\pi)\ \mid\ \pi\in\mathcal{R}_c(G,H)\right\}
\end{align*}
 is a class of isometric representations on a uniformly convex family of Banach spaces, where $E_\pi$ is constructed from $\mathcal{H}_\pi$ as in \eqref{norm_E_pi}. Moreover, in this case, $C_{\mathcal{F}}(G)$ is isomorphic (as a Banach algebra) to $\tilde{A}_c(G,H)$.
 
 The following result was proved in \cite[Theorem 4.6]{DruNow}.
 
 \begin{thm}[Dru\c{t}u--Nowak]\label{Thm_DruNow}
 	Let $G$ be a locally compact group, and let $\mathcal{F}$ be a class of isometric representations of $G$ on a uniformly convex family of Banach spaces $\mathcal{E}$. Then the following are equivalent:
 	\begin{itemize}
 		\item[(i)] There is a compact subset $Q\subseteq G$ and $\varepsilon>0$ such that, for every $\pi\in\mathcal{F}$ and every $\xi\in\left(E_{\pi^*}^G\right)^\perp$,
 		\begin{align*}
 			\sup_{s\in Q}\|\pi(s)\xi-\xi\|\geq\varepsilon\|\xi\|.
 		\end{align*}
 		\item[(ii)] There exists an idempotent $P\in C_{\mathcal{F}}(G)$ such that
 		\begin{itemize}
 			\item for every $\pi\in\mathcal{F}$, $\pi(P)$ is the projection onto $E_\pi^G$ along $\left(E_{\pi^*}^G\right)^\perp$,
 			\item there exists a sequence of continuous, compactly supported probability measures $(\rho_n)$ on $G$ such that
 			\begin{align*}
 				\lim_{n\to\infty}\|\rho_n-P\|_{C_{\mathcal{F}}(G)}=0.
 			\end{align*}
 		\end{itemize}
 	\end{itemize}
 \end{thm}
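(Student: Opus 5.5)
The easy direction is (ii)$\Rightarrow$(i), so I would do it first. Let $P$ and $(\rho_n)$ be as in (ii), and fix $n$ with $\|\rho_n-P\|_{C_{\mathcal{F}}(G)}\leq 1/2$. Set $Q=\operatorname{supp}\rho_n$, which is compact. For $\pi\in\mathcal{F}$ and $\xi\in\left(E_{\pi^*}^G\right)^\perp$ we have $\pi(P)\xi=0$, hence $\|\pi(\rho_n)\xi\|\leq \|\xi\|/2$. Since $\rho_n$ is a probability measure,
\begin{align*}
\xi-\pi(\rho_n)\xi=\int_G(\xi-\pi(s)\xi)\,d\rho_n(s),
\end{align*}
so
\begin{align*}
\frac{1}{2}\|\xi\|\leq\|\xi-\pi(\rho_n)\xi\|\leq\sup_{s\in Q}\|\pi(s)\xi-\xi\|.
\end{align*}
This gives (i) with $\varepsilon=1/2$.

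For (i)$\Rightarrow$(ii), I would build one continuous, compactly supported probability density $\rho$ and show that $\pi(\rho)$ is a uniform strict contraction on the complements. The target is a $\delta>0$ such that $\|\pi(\rho)\xi\|\leq(1-\delta)\|\xi\|$ for every $\pi\in\mathcal{F}$ and every $\xi\in\left(E_{\pi^*}^G\right)^\perp$. Both summands of $E_\pi=E_\pi^G\oplus\left(E_{\pi^*}^G\right)^\perp$ are $G$-invariant, and $\pi(\rho)$ is the identity on $E_\pi^G$. I also need a uniform bound on the projections $p_\pi$ onto $E_\pi^G$ along $\left(E_{\pi^*}^G\right)^\perp$. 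I expect to get this from uniform convexity: $p_\pi\xi$ is the unique fixed point of the closed convex hull of the orbit, or the ergodic mean in the sense of Lemma \ref{Lem_char_p_pi}, so $\|p_\pi\|\leq 1$. Then
\begin{align*}
\|\pi(\rho^{*n})-\pi(\rho^{*m})\|\leq 2(1-\delta)^{\min(n,m)},
\end{align*}
uniformly in $\pi\in\mathcal{F}$. Thus $(\rho^{*n})$ is Cauchy in $C_{\mathcal{F}}(G)$ and converges to some $P$. Its image $\pi(P)$ equals $\mathrm{id}$ on $E_\pi^G$ and $0$ on the complement, so $P$ is idempotent. The measures $\rho_n=\rho^{*n}$ are continuous, compactly supported probability measures, which gives (ii).

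The main obstacle is producing the uniform contraction estimate. Hypothesis (i) only says that some point $s\in Q$ moves $\xi$, and continuity of $\pi$ is not uniform over $\mathcal{F}$. I would handle this with a two-case argument. Take $\rho=\frac12(\mu_V+\mu_{QV})$, where $V$ is a fixed compact symmetric neighbourhood of $e$ and $\mu_V$, $\mu_{QV}$ are continuous normalised densities. Let $\xi\in\left(E_{\pi^*}^G\right)^\perp$ be a unit vector.

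In the first case, $\|\pi(\mu_V)\xi\|\leq 1-\delta_0$. Then $\pi(\rho)\xi$ is the average of a vector of norm at most $1-\delta_0$ and one of norm at most $1$, so $\|\pi(\rho)\xi\|\leq 1-\delta_0/2$.

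In the second case, $\|\pi(\mu_V)\xi\|>1-\delta_0$. Put $\eta=\pi(\mu_V)\xi$, which still lies in the complement. The spectral gap hypothesis applied to $\eta$ gives $s\in Q$ with $\|\pi(s)\eta-\eta\|\geq\varepsilon(1-\delta_0)$. Uniform convexity of the family $\mathcal{E}$ then gives
\begin{align*}
\left\|\tfrac12(\eta+\pi(s)\eta)\right\|\leq 1-d(\varepsilon/2),
\end{align*}
where $d=\inf_{E\in\mathcal{E}}d_E$. To pass from this single point $s$ to the average over $QV$, I would compare $\mu_{QV}$ with translates $s\mu_V$. Possibly I would replace $\rho$ by a suitable convolution power and use strict convexity of norms of averages in uniformly convex spaces to obtain $\delta$ depending only on $\varepsilon$ and $d$. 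This comparison, keeping every constant uniform in $\pi\in\mathcal{F}$, is the technical heart of the proof.
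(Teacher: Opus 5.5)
Your (ii)$\Rightarrow$(i) is correct. It is the same classical argument the paper uses for (ii)$\Rightarrow$(i) in the proof of Theorem~\ref{Thm_char_(T)c_intro}. For (i)$\Rightarrow$(ii) the paper gives no argument of its own; it quotes \cite{DruNow}. Your plan is to realise $P$ as the norm limit of convolution powers $\rho^{*n}$ of one measure with a uniform spectral gap on $\left(E_{\pi^*}^G\right)^\perp$, which is the natural Markov-operator route in the spirit of \cite{DruNow}. The reduction to that gap is sound. The bound $\|p_\pi\|\leq 1$ is right, since the unique $G$-fixed point of $\overline{\mathrm{conv}}\,\pi(G)\xi$ is $p_\pi\xi$. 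Hence $\|1-p_\pi\|\leq 2$, and the Cauchy bound holds (with constant $4$ instead of $2$, which is harmless). The limit is then an idempotent with $\pi(P)=p_\pi$.

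The gap is the one nontrivial step: you never prove the uniform contraction $\|\pi(\rho)\xi\|\leq(1-\delta)\|\xi\|$. The suggested detour through further convolution powers and ``strict convexity of averages'' is unnecessary, because the comparison you name closes it directly. Take $\mu_V=1_V/|V|$, $\mu_{QV}=1_{QV}/|QV|$ and $\alpha=|V|/|QV|$. For $s\in Q$, the left translate $s\mu_V$ is the normalised Haar measure on $sV\subseteq QV$. So $\mu_{QV}=\alpha\, s\mu_V+(1-\alpha)\nu_s$ for some probability measure $\nu_s$, and $\pi(s\mu_V)\xi=\pi(s)\eta$. In your Case 2, take $\delta_0\leq 1/2$, so that $\|\pi(s)\eta-\eta\|\geq\varepsilon/2$. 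Then
\begin{align*}
\pi(\rho)\xi=\tfrac{\alpha}{2}\bigl(\eta+\pi(s)\eta\bigr)+\tfrac{1-\alpha}{2}\bigl(\eta+\pi(\nu_s)\xi\bigr),
\end{align*}
hence $\|\pi(\rho)\xi\|\leq\alpha\bigl(1-d(\varepsilon/2)\bigr)+(1-\alpha)=1-\alpha\, d(\varepsilon/2)$. Thus $\delta=\min\{\delta_0/2,\ \alpha\, d(\varepsilon/2)\}$ works uniformly for every $\pi\in\mathcal{F}$. To make the density continuous, replace $\rho$ by $\rho*\theta$, where $\theta$ is a continuous compactly supported probability density. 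Since $\pi(\theta)$ is a contraction preserving $\left(E_{\pi^*}^G\right)^\perp$, the bound persists. If you prefer continuous densities from the start, take $f_V$ supported in $V$ and $f_{QV}\geq m>0$ on $QV$. Then $f_{QV}\geq (m/\|f_V\|_\infty)\, s\cdot f_V$ for all $s\in Q$, where $(s\cdot f_V)(t)=f_V(s^{-1}t)$, and the same computation applies. With this inserted, your proof is complete.
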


With this, we can prove Theorem \ref{Thm_char_(T)c_intro}.

\begin{proof}[Proof of Theorem \ref{Thm_char_(T)c_intro}]
	(i)$\implies$(ii): Assume that the pair $(G,H)$ satisfies property (T)${}_c$, and let $(Q,\varepsilon)$ be a Kazhdan pair for $(\mathcal{R}_c(G),H)$. For every $\pi\in\mathcal{R}_c(G,H)$, we have
	\begin{align*}
		\mathcal{H}_\pi=\mathcal{H}_\pi^G\oplus \left(\mathcal{H}_{\pi^*}^G\right)^\perp,
	\end{align*}
	and $\mathcal{H}_\pi^H=\mathcal{H}_\pi^G$. By restricting $\pi$ to $\left(\mathcal{H}_{\pi^*}^G\right)^\perp=\left(\mathcal{H}_{\pi^*}^H\right)^\perp$, we obtain a representation without non-trivial  $H$-invariant vectors, and therefore, from the definition of Kazhdan pair, we get
	\begin{align*}
		\sup_{s\in Q}\|\pi(s)\xi-\xi\|_{\mathcal{H}_\pi}\geq\varepsilon\|\xi\|_{\mathcal{H}_\pi},
	\end{align*}
	for every $\xi\in\left(\mathcal{H}_{\pi^*}^G\right)^\perp$. As discussed above, this implies that
	\begin{align*}
		\max_{s\in Q}\|\pi(s)\xi-\xi\|_{E_\pi}\geq \frac{\varepsilon}{c}\|\xi\|_{E_\pi},
	\end{align*}
	for every $\xi\in\left(E_{\pi^*}^G\right)^\perp$. By Theorem \ref{Thm_DruNow}, this is equivalent to the existence of a Kazhdan projection $P\in C_{\mathcal{F}}(G)$, where
	\begin{align*}
		\mathcal{F}=\left\{(E_\pi,\pi)\ \mid\ \pi\in\mathcal{R}_c(G,H)\right\}.
	\end{align*}
	By the isomorphism between $C_{\mathcal{F}}(G)$ and $\tilde{A}_c(G,H)$, we obtain an idempotent $P\in\tilde{A}_c(G,H)$ with the desired properties.\\
	(ii)$\implies$(i): Since Theorem \ref{Thm_DruNow} gives an equivalence, the argument above can be repeated in the opposite direction. We leave the verification of this fact to the reader. Instead, we present here a direct proof of the implication (ii)$\implies$(i) that only uses classical arguments. Let $P\in\tilde{A}_c(G,H)$ be the Kazhdan projection, and let $\rho$ be a continuous, compactly supported probability measure on $G$ such that
	\begin{align*}
		\|\rho-P\|_{\tilde{A}_c(G,H)}<\frac{1}{3}.
	\end{align*}
	If we assume by contradiction that the pair $(G,H)$ does not satisfy Property (T)${}_c$, then there is $\pi\in\mathcal{R}_c(G)$ with $\mathcal{H}_\pi^H=\{0\}$, and a unit vector $\xi\in \mathcal{H}_\pi$ such that
	\begin{align*}
		\max_{s\in Q}\|\pi(s)\xi-\xi\|<\frac{1}{3},
	\end{align*}
	where $Q=\operatorname{supp}(\rho)$. Notice that, in this case, $\pi$ belongs to $\mathcal{R}_c(G,H)$ because $\mathcal{H}_\pi^G\subseteq\mathcal{H}_\pi^H=\{0\}$. Since $\pi(P)\xi=0$,
	\begin{align*}
		1&=\|\xi\|\\
		&=\|\xi-\pi(P)\xi\|\\
		&\leq\|\xi-\pi(\rho)\xi\|+\|\pi(\rho)\xi-\pi(P)\xi\|\\
		&\leq \left\|\int_Q\rho(s)(\xi-\pi(s)\xi)\, ds\right\| + \|\pi(\rho)-\pi(P)\|_{\mathbf{B}(\mathcal{H}_\pi)}\\
		&\leq \int_Q\rho(s)\|\xi-\pi(s)\xi\|\, ds + \|\rho-P\|_{\tilde{A}_c(G,H)}\\
		&\leq \frac{2}{3},
	\end{align*}
	which gives a contradiction.\\
	(ii)$\implies$(iii): Let $P\in\tilde{A}_c(G,H)$ be the Kazhdan projection. We will show that $P$ defines an invariant mean on $B_c(G,H)$. Let $(\rho_n)$ be the sequence of probability measures that approximates $P$. Then
	\begin{align*}
		\langle 1,P\rangle=\lim_n\langle 1,\rho_n\rangle= 1.
	\end{align*}
	Moreover, for every positive function $\varphi\in B_c(G,H)$,
	\begin{align*}
		\langle\varphi,P\rangle = \lim_n\langle \varphi,\rho_n\rangle \geq 0.
	\end{align*}
	This shows that $P$ is a mean. In order to show that it is invariant, we will first restrict it to the weak*-dense subspace $\tilde{B}_c(G,H)\subseteq B_c(G,H)$ defined in Lemma \ref{Lem_Bc_dense}. Let $\varphi\in \tilde{B}_c(G,H)$ be given by $\varphi(s)=\langle\pi(s)\xi,\eta\rangle$ with $\pi\in\mathcal{R}_c(G,H)$ and $\xi,\eta\in\mathcal{H}_\pi$. Then
	\begin{align*}
		\langle\varphi,P\rangle_{B_c(G,H),\tilde{A}_c(G,H)}=\langle\pi(P)\xi,\eta\rangle.
	\end{align*}
	Moreover, since $\pi(P)$ is the projection onto $\mathcal{H}_\pi^G$ along $\left(\mathcal{H}_{\pi^*}^G\right)^\perp$, by Lemma \ref{Lem_char_p_pi},
	\begin{align*}
		\langle\pi(P)\xi,\eta\rangle&=m(s\mapsto\langle\pi(s)\xi,\eta\rangle)\\
		&=\langle m,\varphi\rangle_{B_c(G,H)^*,B_c(G,H)},
	\end{align*}
	where $m$ is the unique invariant mean on $B_c(G,H)$. Hence, $P$ defines an invariant mean on $\tilde{B}_c(G,H)$. Now let $\varphi\in B_c(G,H)$ and $s\in G$. We need to prove that
	\begin{align*}
		\langle s\cdot\varphi,P\rangle=\langle\varphi,P\rangle,
	\end{align*}
	where $s\cdot\varphi(t)=\varphi(s^{-1}t)$. Observe that, for every $f\in L^1(G)$ and $\pi\in\mathcal{R}_c(G,H)$,
	\begin{align*}
		\|\pi(\delta_{s^{-1}}\ast f)\|=\|\pi(s^{-1})\pi(f)\|\leq c\|\pi(f)\|,
	\end{align*}
	which shows that the convolution with $\delta_{s^{-1}}$ extends to a bounded operator on $\tilde{A}_c(G,H)$ of norm at most $c$. Now take a net $(\varphi_i)$ in $\tilde{B}_c(G,H)$ converging to $\varphi$ in the weak*-topology. We obtain
	\begin{align*}
		\langle s\cdot\varphi,P\rangle &= \langle \varphi,\delta_{s^{-1}}\ast P\rangle\\
		&= \lim_i \langle \varphi_i,\delta_{s^{-1}}\ast P\rangle\\
		&= \lim_i \langle s\cdot\varphi_i,P\rangle\\
		&= \lim_i \langle \varphi_i,P\rangle\\
		&=\langle\varphi,P\rangle.
	\end{align*}
	This shows that $P$ is an invariant mean on $B_c(G,H)$. By the uniqueness of this mean, we conclude that $m=P$. In particular, $m$ is weak*-continuous.\\
	(iii)$\implies$(ii): We assume that the unique invariant mean $m\in B_c(G,H)^*$ belongs to $\tilde{A}_c(G,H)$. By Lemma \ref{Lem_char_p_pi}, for every $\pi\in\mathcal{R}_c(G,H)$, $\pi(m)$ is the projection onto $\mathcal{H}_\pi^G$ along $\left(\mathcal{H}_{\pi^*}^G\right)^\perp$. On the other hand, the same argument as in \cite[Lemma 4.2]{Ver} shows that $m$ is the limit of a sequence of continuous, compactly supported probability measures on $G$.
\end{proof}

\subsection{Proof of Proposition \ref{Prop_G/H_(G,H)}}
We can now prove Proposition \ref{Prop_G/H_(G,H)} using Kazhdan projections. We will actually prove the following, more general result, which will be needed in the proof of Theorem \ref{thm:mainthmnil}.

\begin{lem}\label{Lem_(G/H,K/H)}
	Let $G$ be a locally compact group, $H, K$ two closed, normal subgroups of $G$ such that $H\leq K$, and $c\geq 1$. Then $(G,K)$ has relative property (T)${}_c$ if and only if both $(G/H,K/H)$ and $(G,H)$ have relative property (T)${}_c$.
\end{lem}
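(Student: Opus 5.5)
The plan is to argue directly from Definition \ref{def:relT}, using the decompositions \eqref{dec_G-inv} and \eqref{dec_H-inv}. The key fact is that for $H$ normal in $G$ the subspaces $\mathcal{H}_\pi^H$ and $\left(\mathcal{H}_{\pi^*}^H\right)^\perp$ are $G$-invariant, so $\pi$ splits into two subrepresentations. The one on $\mathcal{H}_\pi^H$ factors through $G/H$.

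\emph{Forward direction.} Assume $(G,K)$ has relative property (T)${}_c$.

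1. For $(G,H)$: take $\pi\in\mathcal{R}_c(G)$ with almost invariant vectors. Then $\pi|_K$ has a nonzero invariant vector, and since $H\leq K$, that vector is also $H$-invariant.

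2. For $(G/H,K/H)$: take $\sigma\in\mathcal{R}_c(G/H)$ with almost invariant vectors. Compose with the quotient map $G\to G/H$ to get $\pi\in\mathcal{R}_c(G)$. Compact subsets of $G/H$ are images of compact subsets of $G$ (standard for quotients of locally compact groups), so $\pi$ still has almost invariant vectors. Hence $\pi$ has a nonzero $K$-invariant vector, which is exactly a $K/H$-invariant vector for $\sigma$.

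\emph{Converse.} Assume both $(G/H,K/H)$ and $(G,H)$ have relative property (T)${}_c$. Let $\pi\in\mathcal{R}_c(G)$ have almost invariant vectors; we want a nonzero $K$-invariant vector.

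1. Decompose $\mathcal{H}_\pi=\mathcal{H}_\pi^H\oplus V$ with $V=\left(\mathcal{H}_{\pi^*}^H\right)^\perp$. Both summands are closed and $G$-invariant, and the projection $p$ onto $\mathcal{H}_\pi^H$ along $V$ commutes with $\pi(G)$. Its norm is bounded by $c$; this is Lemma \ref{Lem_char_p_pi} applied to $\pi|_H$, or alternatively the mean over the amenable\dots no, I do not need amenability: Lemma \ref{Lem_char_p_pi} for $H$ gives $\|p\|\leq c$ directly.

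2. The restriction $\pi|_V$ lies in $\mathcal{R}_c(G)$ and has $V^H=\{0\}$. Since $(G,H)$ has relative (T)${}_c$, Kazhdan pairs exist, as in the preliminaries, where "$(G,H)$ has property (T)${}_c$ if $(\mathcal{R}_c(G),H)$ admits a Kazhdan pair". So there is $(Q,\varepsilon)$ with $\max_{s\in Q}\|\pi(s)\eta-\eta\|\geq\varepsilon\|\eta\|$ for all $\eta\in V$.

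3. Let $\xi$ be a unit vector that is $(Q',\delta)$-invariant, where $Q'\supseteq Q$ is an arbitrary compact set. Write $\xi=p\xi+(1-p)\xi$. Since $1-p$ commutes with $\pi$ and $\|1-p\|\leq 1+c$, the component $(1-p)\xi$ is $(Q,(1+c)\delta)$-invariant. The gap then gives $\|(1-p)\xi\|\leq(1+c)\delta/\varepsilon$. So $p\xi$ has norm close to $1$ and is $(Q',c\delta)$-invariant.

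4. Normalising, $\pi|_{\mathcal{H}_\pi^H}$ has almost invariant vectors. This restriction is a representation of $G/H$ in $\mathcal{R}_c(G/H)$, and continuity passes to the quotient. By relative (T)${}_c$ for $(G/H,K/H)$, it has a nonzero $K/H$-invariant vector, which is a $K$-invariant vector of $\pi$.

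The main obstacle is the converse: the summand $\mathcal{H}_\pi^H$ is not orthogonal to $V$, so almost invariance has to be transferred through the bounded projection $p$, and the uniform spectral gap on $V$ is what makes this work. I would double-check that relative (T)${}_c$ in the sense of Definition \ref{def:relT} indeed yields a Kazhdan pair for $(\mathcal{R}_c(G),H)$. If it does not, I would argue by contradiction: take a direct sum (an $\ell^2$-sum, with uniform bound $c$) of counterexamples on $V$-type representations, producing a representation with almost invariant vectors but no $H$-invariant vectors. Alternatively, one could phrase the whole argument via Kazhdan projections, using Theorem \ref{Thm_char_(T)c_intro}.
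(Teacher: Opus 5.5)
Your proof is correct, but it takes a different route from the paper.

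\textbf{Your route.} You argue directly from Definition \ref{def:relT}. The forward direction is immediate. For the converse, you split $\mathcal{H}_\pi=\mathcal{H}_\pi^H\oplus V$ using the $G$-equivariant projection $p$, which has norm at most $c$ by Lemma \ref{Lem_char_p_pi} applied to $\pi|_H$. A Kazhdan pair for $(\mathcal{R}_c(G),H)$ then forces $\|(1-p)\xi\|\leq(1+c)\delta/\varepsilon$, and you push the almost invariant vectors $p\xi/\|p\xi\|$ down to a representation of $G/H$. This is the classical unitary argument, with orthogonality replaced by the uniform bound on $p$ and the spectral gap on $V$. The $\ell^2$-direct-sum argument you sketch for the existence of a Kazhdan pair is exactly what is needed; the paper takes this equivalence for granted in the preliminaries.

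\textbf{The paper's route.} It works entirely with Kazhdan projections and invariant means. The forward direction comes from Lemmas \ref{Lem_T_H} and \ref{Lem_S_K,H}, which identify $T_H(P)$ and $S_{K,H}(P)$ as the Kazhdan projections of the quotient pair and the subgroup pair. For the converse, each $\pi\in\mathcal{R}_c(G,K)$ is split into a summand in $\mathcal{R}_c(G,H)$ and a summand factoring through $G/H$, which yields the norm estimate \eqref{est_norm_A_c}. The paper then shows that a suitable sequence of probability measures approximating the invariant mean $m$ is Cauchy in $\tilde{A}_c(G,K)$, so $m\in\tilde{A}_c(G,K)$.

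\textbf{What each buys.} Your argument is shorter and more elementary, and avoids the simultaneous-approximation and Cauchy-sequence step. The paper's argument gives extra information: explicit formulas for the Kazhdan projections and the inequality \eqref{est_norm_A_c}. It is also deliberately phrased so that it could carry over to a hypothetical relative property (T${}^*$), defined only through weak*-continuity of an invariant mean, where no formulation in terms of almost invariant vectors exists (Question \ref{Ques_rel_T*}).

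\textbf{A small misplacement.} In the forward direction you only need that $q(Q)$ is compact whenever $Q\subseteq G$ is compact. The fact you cite there, that compact subsets of $G/H$ lift to compact subsets of $G$, is what you actually use in step 4 of the converse, to pass almost invariance from $G$ to $G/H$.
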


In order to prove Lemma \ref{Lem_(G/H,K/H)}, we will need some preliminary results. The following lemma generalises \cite[Lemma 6.2]{Ver} and \cite[Corollary 6.3]{Ver}.

\begin{lem}\label{Lem_T_H}
	Let $G$ be a locally compact group, $H, K$ two closed, normal subgroups of $G$ such that $H\leq K$, and $c\geq 1$. Then the map $T_H:L^1(G)\to L^1(G/H)$, given by
	\begin{align}\label{def_T_H}
		T_Hf(\dot{s})=\int_{H}f(sx)\, dx,
	\end{align}
	extends to a contraction $T_H:\tilde{A}_c(G,K)\to\tilde{A}_c(G/H,K/H)$. Moreover, if $(G,K)$ has relative property (T)${}_c$, and $P$ is the Kazhdan projection of $\tilde{A}_c(G,K)$, then $(G/H,K/H)$ has relative property (T)${}_c$, and $T_H(P)$ is the Kazhdan projection of $\tilde{A}_c(G/H,K/H)$.
\end{lem}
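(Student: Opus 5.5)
The plan has two parts: a norm estimate for $T_H$ obtained by pulling back representations along the quotient map, and a transfer of the Kazhdan projection through that map.

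\emph{Step 1 (contractivity).} Let $q:G\to G/H$ be the quotient map. Given $\sigma\in\mathcal{R}_c(G/H,K/H)$, set $\pi=\sigma\circ q$. Then $\pi\in\mathcal{R}_c(G)$ with $|\pi|=|\sigma|\leq c$. Its invariant subspaces satisfy $\mathcal{H}_\pi^G=\mathcal{H}_\sigma^{G/H}$ and $\mathcal{H}_\pi^K=\mathcal{H}_\sigma^{K/H}$, because $\pi(k)=\sigma(kH)$. Hence $\pi\in\mathcal{R}_c(G,K)$. With Haar measures normalised so that Weil's formula $\int_G f=\int_{G/H}\int_H f(sx)\,dx\,d\dot s$ holds, we get
\begin{align*}
\pi(f)=\int_{G/H}\Big(\int_H f(sx)\,dx\Big)\sigma(\dot s)\,d\dot s=\sigma(T_Hf).
\end{align*}
Therefore
\begin{align*}
\|T_Hf\|_{\tilde A_c(G/H,K/H)}=\sup_\sigma\|\sigma(T_Hf)\|\leq\sup_{\pi\in\mathcal{R}_c(G,K)}\|\pi(f)\|=\|f\|_{\tilde A_c(G,K)}.
\end{align*}
In particular $T_H$ maps $N_K$ into the null space of the target seminorm. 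It therefore descends to the quotient and extends by continuity to a contraction. By density, the identity $\sigma(T_Ha)=(\sigma\circ q)(a)$ persists for every $a\in\tilde A_c(G,K)$.

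\emph{Step 2 (Kazhdan projection).} Suppose $(G,K)$ has relative property (T)${}_c$, and let $P$ and $(\rho_n)$ be as in Theorem \ref{Thm_char_(T)c_intro}(ii). Set $P'=T_H(P)$.
\begin{itemize}
\item[(a)] $P'$ is idempotent. $T_H$ is an algebra homomorphism on $L^1$ (integrating over $H$ commutes with convolution), so by continuity $P'^2=T_H(P^2)=P'$.
\item[(b)] $\sigma(P')$ is the correct projection. For $\sigma\in\mathcal{R}_c(G/H,K/H)$, we have $\sigma(P')=\pi(P)$ with $\pi=\sigma\circ q$. This is the projection onto $\mathcal{H}_\pi^G=\mathcal{H}_\sigma^{G/H}$ along $(\mathcal{H}_{\pi^*}^G)^\perp=(\mathcal{H}_{\sigma^*}^{G/H})^\perp$, since $\pi^*=\sigma^*\circ q$.
\item[(c)] Approximation by probability measures. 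Each $T_H\rho_n$ is a continuous, compactly supported probability density on $G/H$: it is positive, has total mass $1$ by Weil's formula, is supported in $q(\operatorname{supp}\rho_n)$, and is continuous because $\rho_n\in C_c(G)$. Contractivity gives $\|T_H\rho_n-P'\|\leq\|\rho_n-P\|\to0$.
\end{itemize}
Theorem \ref{Thm_char_(T)c_intro}, (ii)$\Rightarrow$(i), then gives relative property (T)${}_c$ for $(G/H,K/H)$ with Kazhdan projection $P'$.

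The only step needing care is checking that the pulled-back representation lies in $\mathcal{R}_c(G,K)$, including the adjoint identification. This follows directly from the definitions, since every invariant subspace of $\pi$ is computed through $q$. Continuity of $T_H\rho_n$ is standard, as is the homomorphism property of $T_H$.
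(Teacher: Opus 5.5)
Your proposal is correct. Step 1 is exactly the paper's argument: you pull $\sigma\in\mathcal{R}_c(G/H,K/H)$ back to $\sigma\circ q\in\mathcal{R}_c(G,K)$ and use $\sigma(T_Hf)=(\sigma\circ q)(f)$.

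For the ``moreover'' part you take a genuinely different route. The paper works with characterisation (iii) of Theorem \ref{Thm_char_(T)c_intro}. It observes that $T_H^*\varphi=\varphi\circ q$, and that composition with $q$ preserves constants, positivity and translates. Hence $\varphi\mapsto\langle\varphi\circ q,P\rangle$ is an invariant mean on $B_c(G/H,K/H)$. By uniqueness, $T_H(P)$ is that invariant mean, so the mean lies in the predual.

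You instead verify characterisation (ii) directly. You check idempotence, obtain the projection property through the pulled-back representations, and use the explicit approximants $T_H\rho_n$.

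\emph{What the paper's route buys.} It is shorter and needs neither the multiplicativity of $T_H$ nor the fact that $T_H$ maps $C_c$ probability densities to $C_c$ probability densities. In exchange it relies on the uniqueness of the invariant mean. It also relies on the implication (iii)$\Rightarrow$(ii), whose approximating probability measures come from the argument of \cite[Lemma 4.2]{Ver}.

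\emph{What your route buys.} It avoids invariant means entirely and uses only the elementary direction (ii)$\Rightarrow$(i), at the cost of a few routine checks.

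Two small remarks on your version. First, step (a) is redundant. One has $\|a\|_{\tilde{A}_c(G/H,K/H)}=\sup_{\sigma}\|\sigma(a)\|$ for every $a$ in the completion, so (b) already gives $\sigma(P'^2-P')=0$ for all $\sigma$, hence $P'^2=P'$. Second, the same observation justifies calling $P'$ \emph{the} Kazhdan projection. An element of $\tilde{A}_c(G/H,K/H)$ is determined by its images under all $\sigma\in\mathcal{R}_c(G/H,K/H)$, so the idempotent in (ii) is unique.
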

\begin{proof}
	Let $\pi\in\mathcal{R}_c(G/H,K/H)$, and let $\tilde{\pi}=\pi\circ q$, where $q:G\to G/H$ is the quotient map. Then $\tilde{\pi}$ is an element of $\mathcal{R}_c(G,K)$ because
	\begin{align*}
		\mathcal{H}_{\tilde{\pi}}^K=\mathcal{H}_{\pi}^{K/H}=\mathcal{H}_{\pi}^{G/H}=\mathcal{H}_{\tilde{\pi}}^G.
	\end{align*}
	Then, for every $f\in L^1(G)$, $\tilde{\pi}(f)=\pi(T_Hf)$. Hence
	\begin{align*}
		\|\pi(T_Hf)\|\leq \|\tilde{\pi}(f)\| \leq \|f\|_{\tilde{A}_c(G,K)}.
	\end{align*}
	Taking the supremum over $\mathcal{R}_c(G/H,K/H)$, we get
	\begin{align*}
		\|T_Hf\|_{\tilde{A}_c(G/H,K/H)} \leq \|f\|_{\tilde{A}_c(G,K)},
	\end{align*}
	and therefore $T_H$ extends to a contraction $\tilde{A}_c(G,K)\to\tilde{A}_c(G/H,K/H)$.	Finally, observing that the adjoint map $T_H^*:B_c(G/H,K/H)\to B_c(G,K)$ is given by $T_H^*(\varphi)=\varphi\circ q$, one easily checks that, if $P\in\tilde{A}_c(G,K)$ is the unique invariant mean on $B_c(G,K)$, then $T_H(P)$ is the unique invariant mean on $B_c(G/H,K/H)$.
\end{proof}

We will also need the following result, whose proof is very similar to that of Lemma \ref{Lem_T_H}.

\begin{lem}\label{Lem_S_K,H}
	Let $G$ be a locally compact group, $H, K$ two closed, normal subgroups of $G$ such that $H\leq K$, and $c\geq 1$. Then the identity map $L^1(G)\to L^1(G)$ extends to a contraction $S_{K,H}:\tilde{A}_c(G,K)\to\tilde{A}_c(G,H)$. Moreover, if $(G,K)$ has relative property (T)${}_c$, and $P$ is the Kazhdan projection of $\tilde{A}_c(G,K)$, then $(G,H)$ has relative property (T)${}_c$, and $S_{K,H}(P)$ is the Kazhdan projection of $\tilde{A}_c(G,H)$.
\end{lem}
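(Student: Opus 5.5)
The plan follows the proof of Lemma \ref{Lem_T_H}: first compare the two classes of representations, then transfer the invariant mean through the adjoint map. The key observation is that $\mathcal{R}_c(G,H)\subseteq\mathcal{R}_c(G,K)$ whenever $H\leq K$. Indeed, for $\pi\in\mathcal{R}_c(G,H)$ we have
\begin{align*}
	\mathcal{H}_\pi^G\subseteq\mathcal{H}_\pi^K\subseteq\mathcal{H}_\pi^H=\mathcal{H}_\pi^G,
\end{align*}
so all three spaces coincide and $\pi\in\mathcal{R}_c(G,K)$. Consequently, for $f\in L^1(G)$, taking the supremum over the smaller class gives
\begin{align*}
	\|f\|_{\tilde{A}_c(G,H)}=\sup_{\pi\in\mathcal{R}_c(G,H)}\|\pi(f)\|\leq\sup_{\pi\in\mathcal{R}_c(G,K)}\|\pi(f)\|=\|f\|_{\tilde{A}_c(G,K)}.
\end{align*}
In particular $N_K\subseteq N_H$, so the identity on $L^1(G)$ descends to a well-defined contraction $L^1(G)/N_K\to L^1(G)/N_H$. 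This map extends by continuity to $S_{K,H}:\tilde{A}_c(G,K)\to\tilde{A}_c(G,H)$, and it has dense range.

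For the second part, let $P\in\tilde{A}_c(G,K)$ be the Kazhdan projection. By Theorem \ref{Thm_char_(T)c_intro}, $P=m_K$, where $m_K$ is the unique invariant mean on $B_c(G,K)$. The adjoint $S_{K,H}^*:B_c(G,H)\to B_c(G,K)$ is the inclusion of function spaces, since both spaces sit inside $B_c(G)$ compatibly with the $(L^\infty,L^1)$ pairing. I would then show that the functional $\varphi\mapsto\langle\varphi,S_{K,H}(P)\rangle=\langle S_{K,H}^*\varphi,P\rangle=m_K(\varphi)$ on $B_c(G,H)$ is an invariant mean. This requires checking three things.
\begin{itemize}
	\item The inclusion maps the constant function $1$ to $1$.
	\item The inclusion preserves positivity.
	\item The inclusion commutes with left translations.
\end{itemize}
All three hold because $S_{K,H}^*$ is literally the identity on functions. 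Hence $m_K$ restricted to $B_c(G,H)$ is an invariant mean. By uniqueness it equals $m_H$, so $m_H=S_{K,H}(P)\in\tilde{A}_c(G,H)$ is weak*-continuous. Theorem \ref{Thm_char_(T)c_intro} (iii)$\implies$(i),(ii) then gives relative property (T)${}_c$ for $(G,H)$ and identifies $S_{K,H}(P)$ as its Kazhdan projection.

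As a cross-check, one could instead verify condition (ii) directly. For $\pi\in\mathcal{R}_c(G,H)\subseteq\mathcal{R}_c(G,K)$, $\pi(S_{K,H}P)=\pi(P)$ is the projection onto $\mathcal{H}_\pi^G$ along $(\mathcal{H}_{\pi^*}^G)^\perp$. Moreover, $\|\rho_n-S_{K,H}P\|_{\tilde{A}_c(G,H)}\leq\|\rho_n-P\|_{\tilde{A}_c(G,K)}\to0$. The idempotency of $S_{K,H}P$ follows because the extension of the identity is an algebra homomorphism.

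The main point needing care is that $S_{K,H}^*$ really is the inclusion. A priori $B_c(G,H)$ is just an abstract dual space, so I would pass through the common embeddings $S_H^*$ and $S_K^*$ into $B_c(G)$. Since $S_H=S_{K,H}\circ S_K$ on $L^1(G)$, we get $S_H^*=S_K^*\circ S_{K,H}^*$, which yields the required identification.
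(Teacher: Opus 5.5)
Your proposal is correct and follows essentially the same route as the paper. The inclusion $\mathcal{R}_c(G,H)\subseteq\mathcal{R}_c(G,K)$ gives the contraction, and since $S_{K,H}^*$ is the inclusion $B_c(G,H)\hookrightarrow B_c(G,K)$, the invariant mean $P$ restricts to the unique invariant mean on $B_c(G,H)$. You also justify details the paper leaves implicit: the chain $\mathcal{H}_\pi^G\subseteq\mathcal{H}_\pi^K\subseteq\mathcal{H}_\pi^H$, and the identification of $S_{K,H}^*$ via $S_H^*=S_K^*\circ S_{K,H}^*$. Your direct check of condition (ii) is an equally valid shortcut.
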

\begin{proof}
	The result follows from observing that $\mathcal{R}_c(G,H)$ is contained in $\mathcal{R}_c(G,K)$. By the definitions of the norms on $\tilde{A}_c(G,K)$ and $\tilde{A}_c(G,H)$, $S_{K,H}:\tilde{A}_c(G,K)\to\tilde{A}_c(G,H)$ is a well-defined contraction. Moreover, its adjoint map is simply the inclusion $B_c(G,H)\hookrightarrow B_c(G,K)$. Hence, if $P\in\tilde{A}_c(G,K)$ is the unique invariant mean on $B_c(G,K)$, then $S_{K,H}(P)$ is the unique invariant mean on $B_c(G,H)$.
\end{proof}

Now we can prove Lemma \ref{Lem_(G/H,K/H)}.

\begin{proof}[Proof of Lemma \ref{Lem_(G/H,K/H)}]
	Assume first that $(G,K)$ has relative property (T)${}_c$. By Lemmas \ref{Lem_T_H} and \ref{Lem_S_K,H}, both $(G/H,K/H)$ and $(G,H)$ have relative property (T)${}_c$.\\
	Now assume that $(G/H,K/H)$ and $(G,H)$ have relative property (T)${}_c$. Let $\pi\in\mathcal{R}_c(G,K)$, and let $p_G$ and $p_H$ denote the projection onto $\mathcal{H}_\pi^G$ along $\left(\mathcal{H}_{\pi^*}^G\right)^\perp$ and the projection onto $\mathcal{H}_\pi^H$ along $\left(\mathcal{H}_{\pi^*}^H\right)^\perp$ respectively. Recall that, since $H$ is a normal subgroup, the spaces $\mathcal{H}_\pi^H$, $\left(\mathcal{H}_{\pi^*}^H\right)^\perp$ are also $G$-invariant.	From the decompositions \eqref{dec_G-inv} and \eqref{dec_H-inv}, we can write
	\begin{align*}
		\mathcal{H}_\pi &=p_G \mathcal{H}_\pi \oplus (1-p_G) \mathcal{H}_\pi\\
		&= p_H p_G \mathcal{H}_\pi \oplus (1-p_H)p_G \mathcal{H}_\pi \oplus p_H (1-p_G) \mathcal{H}_\pi \oplus (1-p_H)(1-p_G) \mathcal{H}_\pi.
	\end{align*}
	Moreover, since $\mathcal{H}_\pi^G\subseteq \mathcal{H}_\pi^H$, we have $p_Hp_G=p_G$ and $(1-p_H)(1-p_G)=(1-p_H)$. Thus we obtain the following decomposition into $G$-invariant subspaces:
	\begin{align*}
		\mathcal{H}_\pi=\mathcal{H}_1\oplus\mathcal{H}_2,
	\end{align*}
	where $\mathcal{H}_1=p_G\mathcal{H}_\pi \oplus (1-p_H)\mathcal{H}_{\pi}$ and $\mathcal{H}_2=p_H (1-p_G)\mathcal{H}_\pi$. Observe that the restriction of $\pi$ to $\mathcal{H}_1$ belongs to $\mathcal{R}_c(G,H)$. By Lemma \ref{Lem_char_p_pi}, the projections $p_1=p_G+(1-p_H):\mathcal{H}_\pi\to\mathcal{H}_1$ and $p_2=(p_H-p_G):\mathcal{H}_\pi\to\mathcal{H}_2$ satisfy
	\begin{align*}
		\|p_1\| &\leq 2c+1, & \|p_2\| &\leq 2c.
	\end{align*} 
	For every $f\in L^1(G)$ and $\xi\in\mathcal{H}_\pi$, we have
	\begin{align*}
		\pi(f)\xi &= \pi(f)p_1\xi + \pi(f)p_2\xi\\
		 &= \pi(f)p_1\xi + \tilde{\pi}(T_Hf)p_2\xi,
	\end{align*}
	where $\tilde{\pi}\in\mathcal{R}_c(G/H,K/H)$ is the representation on $\mathcal{H}_2$, given by $\tilde{\pi}(\dot{s})=\pi(s)$, and $T_H$ is as in \eqref{def_T_H}. Here we are using the fact that $\mathcal{H}_2\subseteq\mathcal{H}_\pi^H$. Then
	\begin{align*}
		\|\pi(f)\xi\| &\leq \|\pi(f)p_1\xi\| + \|\tilde{\pi}(T_Hf)p_2\xi\| \\
		&\leq \|f\|_{\tilde{A}_c(G,H)}\|p_1\xi\| + \|T_Hf\|_{\tilde{A}_c(G/H,K/H)}\|p_2\xi\| \\
		&\leq (2c+1)\|f\|_{\tilde{A}_c(G,H)}\|\xi\| + 2c\|T_Hf\|_{\tilde{A}_c(G/H,K/H)}\|\xi\|.
	\end{align*}
	Since this holds for any $\pi\in\mathcal{R}_c(G,K)$ and $\xi\in\mathcal{H}_\pi$, we obtain
	\begin{align}\label{est_norm_A_c}
		\|f\|_{\tilde{A}_c(G,K)} \leq (2c+1)\|f\|_{\tilde{A}_c(G,H)} + 2c\|T_Hf\|_{\tilde{A}_c(G/H,K/H)}
	\end{align}
	for all $f\in L^1(G)$. Let now $m$ be the unique invariant mean on $B_c(G,K)$. Extending $m$ to a state on $L^\infty(G)$, we may find a net of probability measures $(f_i)$ such that $f_i\to m$ in $\sigma(L^\infty(G)^*,L^\infty(G))$; see the proof \cite[Lemma 5.11]{HaKndL}. Since $m$ is also the unique invariant mean on $B_c(G,H)$, which we know belongs to $\tilde{A}_c(G,H)$, taking convex combinations, we find a sequence of continuous, compactly supported probability measures $(\rho_n)$ such that $\|\rho_n-m\|_{\tilde{A}_c(G,H)}\to 0$, and $\rho_n\to m$ in $\sigma(B_c(G,K)^*,B_c(G,K))$; see the proof of \cite[Lemma 4.2]{Ver}. Moreover, observing that $T_H^{**}m$ is the unique invariant mean on $B_c(G/H,K/H)$, by the same argument, we may assume that $\|T_H\rho_n-T_H m\|_{\tilde{A}_c(G/H,K/H)}\to 0$. These two facts, together with \eqref{est_norm_A_c}, show that $(\rho_n)$ is a Cauchy sequence in $\tilde{A}_c(G,K)\subseteq B_c(G,K)^*$, and therefore it converges in norm to its weak*-limit $m$. We conclude that $m$ belongs to $\tilde{A}_c(G,K)$, and thus $(G,K)$ has relative property (T)${}_c$.
\end{proof}

By setting $K=G$ in Lemma \ref{Lem_(G/H,K/H)}, we obtain Proposition \ref{Prop_G/H_(G,H)} as a particular case.

\section{Relative property \texorpdfstring{(T$_{ub}$)}{(Tub)} for semidirect products of an abelian group}
\label{sec:ab}

In this section, we prove Theorem \ref{thm:semidirectab}, which gives the equivalence between relative property (T) and relative property (T${}_{ub}$) for semidirect products with abelian groups.

\begin{proof}[Proof of Theorem \ref{thm:semidirectab}]
    The implication (T${}_{ub}$)$\Rightarrow$ (T) is immediate. Assume that $(G,A)$ has relative (T) but not relative (T${}_{ub}$). By \cite[Lemma 2.5.1]{Cor}, $A$ is $\sigma$-compact. We will construct a net of probability measures verifying $(P1'),(P2),(P3)$, thus obtaining a contradiction by Theorem \ref{thm:cortes}. We will denote by $h\cdot a$ the action of $H$ on $A$, that is to say $h\cdot a=hah^{-1}$, to distinguish from the multiplication in the group $G$.

    To construct this net, we consider $\rho$ a uniformly bounded representation of $G$ on $\mathcal{H}$, with almost invariant vectors and with $\mathcal{H}^A=\{0\}$ (which exists since $(G,A)$ does not have relative (T${}_{ub}$)). Let $c=|\rho|$, since $A$ is abelian and hence amenable, there exists an invertible operator $T\in \mathbf{B}(\mathcal{H})$ such that $\pi\vert_A$ is unitary, where $\pi(g)=T\circ \rho(g)\circ T^{-1}$, and with $\Vert T\Vert \Vert T^{-1}\Vert \leq c^2$. Thus, we may consider $E:\mathcal{B}(\hat{A})\to \mathbf{B}(\mathcal{H})$ the projection-valued measure associated to $\pi\vert_A$. For any $f$ bounded Borel function on $\hat{A}$, we obtain an operator $\pi(f)$. Note that $E(\{1\})=0$ by assumption, because $E(\{1\})$ is the projection onto $\mathcal{H}^A$.

    By Proposition \ref{prop:ubtoisom} and Remark \ref{rk:renorm}, consider an equivalent uniformly smooth norm $\Vert \cdot \Vert_\rho$ on $\mathcal{H}$ with respect to which $\rho$ is isometric, with norm ratio $\leq c$ and with modulus of smoothness at $r(\tau)\leq \frac{c^2\tau^2}{2}$. Let $\langle\cdot,\cdot\rangle$ denote the duality bracket for this norm (and not the inner product of $\mathcal{H}$) and $\xi\mapsto \xi^*$ the duality mapping. By assumption, for any compact subset $Q\subset G$ and for any $\delta>0$, there exists a vector $\xi_{Q,\delta}$ with $\Vert \xi_{Q,\delta}\Vert_\rho=1$ which is $(Q,\delta)$-invariant. Then by equivalence of norms and by Lemma \ref{lem:duality} and Remark \ref{rmk:contrag}, we have \begin{equation}\label{eq:inv}
        \underset{g\in Q}{\sup} \Vert \rho(g)\xi_{Q,\delta}-\xi_{Q,\delta}\Vert_\rho < c\delta
    \end{equation} and  \begin{equation}\label{eq:invdual}\underset{g\in Q}{\sup} \Vert \rho^*(g)\xi_{Q,\delta}^*-\xi_{Q,\delta}^*\Vert_{\rho^*} < 4c^3\delta.\end{equation}

    The map $f\mapsto \langle T^{-1}\pi(f)T\xi_{Q,\delta},\xi_{Q,\delta}^*\rangle$ is a linear functional on $C_0(\hat{A})$. Thus it is represented by a complex measure $\mu_{Q,\delta}\in M(\hat{A})$ such that \begin{equation}\label{eq:muQd}
        \int_{\hat{A}} f(\chi)d\mu_{Q,\delta}(\chi)=\langle T^{-1}\pi(f)T\xi,\xi^*\rangle.
    \end{equation}The equation \eqref{eq:muQd} extends to bounded Borel functions, hence $\mu_{Q,\delta}(\{1\})=0$. But these measures are not \textit{probability} measures. Thus, consider $$\Tilde{\mu}_{Q,\delta}=\frac{(\mu_{Q,\delta})_r^+}{(\mu_{Q,\delta})_r^+(\hat{A})}$$which is a positive probability measure (as defined in Section \ref{sec:measures}). Then we have $\Tilde{\mu}_{Q,\delta}(\{1\})=0$ so $(P2)$ holds.

    Let $B\in \mathcal{B}(\hat{A})$. Let $h\in H$, $a\in A$, by the action of $H$ on $A$, we have $\pi(h)\pi(a)\pi(h^{-1})=\pi(hah^{-1})=\pi(h\cdot a)$. Then by the construction of the projection-valued measure associated to $\pi$, we get $\pi(h)E(B)\pi(h^{-1})=E(hB)$. Thus we obtain \begin{equation}
        \forall h\in H,B\in \mathcal{B}(\hat{A}), \quad T\rho(h)T^{-1}E(Z)T\rho(h^{-1})T^{-1}=E(hB).
    \end{equation}
    Assume now that $h\in Q\cap H$, we have
    \begin{align*}
        \vert \mu_{Q,\delta}(h^{-1}B)- \mu_{Q,\delta}(B)\vert & = \vert  \langle T^{-1}E(h^{-1}B)T\xi_{Q,\delta},\xi_{Q,\delta}^*\rangle- \langle T^{-1}E(B)T\xi_{Q,\delta},\xi_{Q,\delta}^*\rangle\vert\\
        &=\vert\langle \rho(h^{-1})T^{-1}E(B)T\rho(h)\xi_{Q,\delta},\xi_{Q,\delta}^*\rangle- \langle T^{-1}E(B)T\xi_{Q,\delta},\xi_{Q,\delta}^*\rangle\vert\\
        &=\vert\langle T^{-1}E(B)T\rho(h)\xi_{Q,\delta},\rho^*(h)\xi_{Q,\delta}^*\rangle- \langle T^{-1}E(B)T\xi_{Q,\delta},\xi_{Q,\delta}^*\rangle\vert\\
        &\leq \vert\langle T^{-1}E(B)T\left(\rho(h)\xi_{Q,\delta}-\xi_{Q,\delta}\right),\rho^*(h)\xi_{Q,\delta}^*\rangle\vert\\
        &\phantom{\leq}+ \vert\langle T^{-1}E(B)T\xi_{Q,\delta},\rho^*(h)\xi_{Q,\delta}^*-\xi_{Q,\delta}^*\rangle\vert\\
        &\leq c^3\delta+4c^5\delta,
    \end{align*}
    the last inequality using \eqref{eq:inv} and \eqref{eq:invdual}. So by \eqref{eq:supbound}, we obtain \begin{equation*}
        \forall h\in Q^{-1}\cap H,\quad \Vert h_*(\mu_{Q,\delta})^+_r-(\mu_{Q,\delta})^+_r\Vert_{TV}<10c^5\delta.
    \end{equation*}
    Since $\mu_{Q,\delta}(\hat{A})=\langle \xi,\xi^*\rangle=1$, we have $\mu^+_{Q,\delta}(\hat{A})\geq 1$ thus
    \begin{equation}\label{eq:tvbound}
        \forall h\in Q^{-1}\cap H,\quad \Vert h_*\Tilde{\mu}_{Q,\delta}-\Tilde{\mu}_{Q,\delta}\Vert_{TV}<10c^5\delta.
    \end{equation}
    Thus $(P3)$ holds for this net of probability measures.

    Now let $a\in A$. Let $P$ be the support of the positive part $\mu_{Q,\delta}^+$. Then we have \begin{equation}\label{eq:postparttomu} \int_{\hat{A}} \vert 1-\chi(a)\vert^2 d\mu_{Q,\delta}^+(\chi)=\left\vert \int_{\hat{A}} 1_P(\chi)\vert 1-\chi(a)\vert^2 d\Re\mu_{Q,\delta}(\chi)\right\vert\leq \left\vert \int_{\hat{A}} 1_P(\chi)\vert 1-\chi(a)\vert^2 d\mu_{Q,\delta}(\chi)\right\vert.\end{equation}

    Let $\mathrm{ev}_a:\chi\mapsto \chi(a)$. Then \begin{align*}
        \left\vert \int_{\hat{A}} 1_P(\chi)\vert 1-\chi(a)\vert^2 d\mu_{Q,\delta}(\chi)\right\vert &= \left\vert \int_{\hat{A}} (1-\overline{\chi(a)})1_P(\chi)(1-\chi(a))\vert^2 d\mu_{Q,\delta}(\chi)\right\vert\\
        &=  \vert \langle T^{-1}\pi((1-\overline{\mathrm{ev}_a})1_P(1-\mathrm{ev}_a))T\xi_{Q,\delta},\xi_{Q,\delta}^*\rangle\vert\\
         &=  \vert \langle T^{-1}(\Id-\pi(a^{-1}))E(P)(\Id-\pi(a))T\xi_{Q,\delta},\xi_{Q,\delta}^*\rangle\vert\\
         &=  \vert \langle (\Id-\rho(a^{-1}))T^{-1}E(P)T(\Id-\rho(a))\xi_{Q,\delta},\xi_{Q,\delta}^*\rangle\vert\\
         &=  \vert \langle T^{-1}E(P)T(\xi_{Q,\delta}-\rho(a)\xi_{Q,\delta}),\xi_{Q,\delta}^*-\rho^*(a)\xi_{Q,\delta}^*\rangle\vert
    \end{align*}
    Now if $a\in Q\cap A$, we obtain \begin{equation}
        \label{eq:estimateA} \left\vert \int_{\hat{A}} 1_P(\chi)\vert 1-\chi(a)\vert^2 d\mu_{Q,\delta}(\chi)\right\vert \leq 4c^6\delta^2
    \end{equation}
    Finally, combining \eqref{eq:postparttomu}, \eqref{eq:estimateA}, Hölder's inequality and the fact that $\Tilde{\mu}_{Q,\delta}(\hat{A})\geq 1$, we get that \[\int_{\hat{A}} \chi(a)d\mu_{Q,\delta}(\chi)\to 1\]uniformly on compact subsets of $A$, hence $(P1')$.

    Thus we constructed a net of probability measures which verifies $(P1'),(P2),(P3)$. Since we assumed that the pair $(G,A)$ has relative (T), this is a contradiction, so $(G,A)$ has relative (T${}_{ub}$).
\end{proof}

\begin{rmk}
    If $A$ is a normal subgroup of $G$ but $G$ does not split as a semidirect product, set $H=G/A$. Then $(\neg P)$ implies relative property (T${}_{ub}$) with the same proof, but it is not clear whether (T) implies $(\neg P)$ in this case; see \cite[Theorem 7]{CorTes}.
\end{rmk}

Under some stronger hypothesis, we can obtain a better result. To do so, we need to define property (T) for triples (see, for example, \cite[Remark 0.2.1]{Jau}).

\begin{defi}
    Let $H,K$ be two closed subgroups of $G$. The triple $(G,H,K)$ has property (T)${}_c$ if for any $\pi\in \mathcal{R}_c(G)$ such that $1_H\prec\pi\vert_H$, one has $1_K \leq \pi\vert_K$.

    The triple $(G,H,K)$ has property (T${}_{ub}$) if $(G,H,K)$ has property (T)${}_c$ for any $c\geq 1$.
\end{defi}

When $G=H\ltimes K$ is a semidirect product, a variant of this property was introduced in \cite{Sha} (in the unitary setting) as ``strong property (T) of $(H,K)$'', see also \cite[Definition 5.2]{BFGM}. We prefer the terminology of property (T) of a triple, which is more general.

\begin{thm}
    Let $A$ be a locally compact abelian group and $G=H\ltimes A$ a locally compact group. Let $1$ be the trivial character of $A$. If the only $H$-invariant mean on the Pontryagin dual $\hat{A}$ is the Dirac mass $\delta_{1}$, then $(G,H,A)$ has property (T${}_{ub}$).
\end{thm}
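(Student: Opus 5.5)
We argue by contradiction, closely following the proof of Theorem \ref{thm:semidirectab}. Fix $c\geq 1$ and suppose there is $\rho\in\mathcal{R}_c(G)$ such that $\rho\vert_H$ has almost invariant vectors, i.e.\ $1_H\prec\rho\vert_H$, but $\mathcal{H}^A=\{0\}$. (If $\mathcal{H}^A\neq\{0\}$ we are done. Since $A$ is normal, $\mathcal{H}^A$ and the complement $(\mathcal{H}_{\rho^*}^A)^\perp$ are $G$-invariant, so we may also pass to the complement; we must then check that the $H$-almost invariant vectors survive the projection, see below.)

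As in the proof of Theorem \ref{thm:semidirectab}, we unitarise $\rho\vert_A$ via $T$ with $\|T\|\|T^{-1}\|\leq c^2$, take the projection-valued measure $E$ of $\pi\vert_A$ (so $E(\{1\})=0$), and pass to the uniformly smooth norm $\|\cdot\|_\rho$ of Remark \ref{rk:renorm}. For a compact $Q\subseteq H$ and $\delta>0$, we choose a $\|\cdot\|_\rho$-unit vector $\xi_{Q,\delta}$ that is $(Q,\delta)$-invariant under $H$. We then define the complex measure $\mu_{Q,\delta}$ on $\hat{A}$ by \eqref{eq:muQd}, and the probability measure $\tilde{\mu}_{Q,\delta}$ as the normalised positive real part. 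The computation leading to \eqref{eq:tvbound} uses only $H$-almost invariance of $\xi$ and of $\xi^*$ (via Lemma \ref{lem:duality} and Remark \ref{rmk:contrag}). It therefore gives $\|h_*\tilde{\mu}_{Q,\delta}-\tilde{\mu}_{Q,\delta}\|_{TV}<10c^5\delta$ for $h\in Q^{-1}$, together with $\tilde{\mu}_{Q,\delta}(\{1\})=0$.

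Next we regard each $\tilde{\mu}_{Q,\delta}$ as a mean on $\mathcal{L}^\infty(\hat{A})$ and take a weak$^*$ cluster point $m$ of the net indexed by $(Q,\delta)$. Total-variation almost invariance gives $|m(h\cdot f)-m(f)|=0$ for every $h\in H$ and every bounded Borel $f$, so $m$ is an $H$-invariant mean. By hypothesis $m=\delta_1$, and in particular $m(1_{\{1\}})=1$. But $m(1_{\{1\}})=\lim\tilde{\mu}_{Q,\delta}(\{1\})=0$, a contradiction.

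The main obstacle is the reduction to $\mathcal{H}^A=\{0\}$ while keeping $H$-almost invariant vectors. If instead we work on all of $\mathcal{H}$, the measure $\mu$ may charge $\{1\}$ through $E(\{1\})$, and we need $\tilde{\mu}(\{1\})$ bounded away from $0$ to conclude $1_A\leq\rho\vert_A$. The approach I would try first uses the same contradiction on $\hat{A}\setminus\{1\}$: if $\tilde{\mu}_{Q,\delta}(\{1\})\to 0$ along a subnet, then renormalising the restrictions to $\hat{A}\setminus\{1\}$ yields $H$-invariant means on $\hat{A}$ that vanish on $\{1\}$, contradicting uniqueness of $\delta_1$. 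Hence $\tilde{\mu}(\{1\})\geq\eta>0$ eventually. This gives $\langle T^{-1}E(\{1\})T\xi,\xi^*\rangle$ bounded below in real part, so $E(\{1\})\neq0$, and $T^{-1}E(\{1\})T\xi$ is a nonzero $A$-invariant vector. The delicate point is controlling the positive-part normalisation, i.e.\ showing that $(\mu_{Q,\delta})^+_r(\hat{A})$ stays bounded, which holds since it is at most $\|\mu\|\leq c^3$.
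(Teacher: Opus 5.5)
Your first three paragraphs are exactly the paper's proof. You assume $\rho\vert_H$ has almost invariant vectors and $\mathcal{H}^A=\{0\}$, build $\tilde{\mu}_{Q,\delta}$ with $\tilde{\mu}_{Q,\delta}(\{1\})=0$ and total-variation almost invariance coming only from $H$, and then take a weak$^*$ cluster point in $\mathcal{L}^\infty(\hat{A})^*$ to contradict uniqueness of $\delta_1$.

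The ``main obstacle'' in your last paragraph is not real. The negation of $1_A\leq\rho\vert_A$ is exactly $\mathcal{H}^A=\{0\}$, so no passage to a complement is needed. In your direct variant, the bound that matters is $(\mu_{Q,\delta})^+_r(\hat{A})\geq 1$, not the upper bound $c^3$.
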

\begin{proof}
    The proof is very similar but simpler: construct the same net of probability measures $(\Tilde{\mu}_{Q,\delta})$ (one can actually use the total variation instead of the positive part here). Notice that the assumption still allows us to conclude that $\Tilde{\mu}_{Q,\delta}(\{1\})=0$ and that \eqref{eq:tvbound} relies only on almost invariance for $\pi\vert_H$ and not on all of $G$. Then by Banach--Alaoglu, take a subnet of these probability measures converging in the weak-$*$ topology of $\mathcal{L}^\infty(\hat{A})$. The limit $\nu$ is an $H$-invariant mean, hence by assumption $\delta_1$, which contradicts $\Tilde{\mu}_{Q,\delta}(\{1\})=0$ for any $Q,\delta$.
\end{proof}

\begin{rmk}
    The characterisation of Theorem \ref{thm:cortes} extends beyond the realm of representations: a pair $(G,A)$ where $G$ is a semidirect product of the abelian group $A$ has relative property (T) if and only if it has relative property (TTT) of Ozawa \cite{Oza}, which deals with ``almost representations'' in a precise sense that we will not detail here. As mentioned in the introduction, relative property (T) is implicitly used in \cite{Kaz} to prove that higher rank simple groups have property (T). The same idea can be used to show that higher rank groups have property (TTT) \cite{Dum24, Oza}. The ideas developed in this section partially extend to this more general setting, see \cite[Chapter 6]{Dum25}.
\end{rmk}

\section{Relative property \texorpdfstring{(T$_{ub}$)}{(Tub)} for semidirect products of a nilpotent group}\label{sec:nil}

In this section, we extend Theorem \ref{thm:semidirectab} from abelian groups to nilpotent groups. Let $N$ be a locally compact group. Recall that $[N,N]$ denotes the commutator subgroup of $N$. The group $N$ is said to be nilpotent if it has a finite lower central series \[N=N_0\triangleright N_1 \triangleright \cdots \triangleright N_n=\{e\}\]with $N_{i+1}=\overline{[N_i,N]}$. The nilpotency class of $N$ is the smallest $n$ such that $N_n=\{e\}$. A group $G$ is nilpotent of class $1$ if and only if it is abelian.

\begin{rmk}
    This definition is equivalent to the usual \emph{algebraic} definition of nilpotent groups, i.e. without taking closure of commutator subgroups, since $N$ is assumed to be Hausdorff. Since we are working with locally compact groups, it is more convenient to consider closed subgroups in the definition, to be able to take quotients.
\end{rmk}

As mentioned in the introduction, we follow the ideas of \cite{CWMS} and begin by proving Theorem \ref{thm:quotab_intro}. This theorem should be seen as a twofold generalisation of a theorem of Serre, which may be of independent interest; see Corollary \ref{Cor_Serre_intro}. When $H=G$ and representations are unitary, the usual theorem of Serre \cite[Theorem 1.7.11]{BHV} is proved by considering an irreducible representation $\pi$ of $G$ and looking at $\pi\otimes \pi^*$. Since $A$ is central, Schur's lemma implies that $\pi\otimes \pi^*$ descends to $G/A$.

In general, the situation is more complicated. Theorem \ref{thm:quotab_intro} is proven in the case of unitary representations in \cite[Theorem 4.1]{CWMS}. We will adapt their proof to the case of uniformly bounded representations. To do so, we will need to introduce several Banach norms to obtain isometries at every step, making the proof technical. Before going into the details, let us sketch the steps of the proof.

\begin{enumerate}
    \item From $\pi\in \mathcal{R}_c(G)$, construct a representation $\pi\otimes_{\hat{A}} \pi^*$ of $G/A$. Unlike the easy case $G=H$, the tensor product will happen in fibres of a direct integral.
    \item Ensure that almost invariant vectors of $\pi$ yield almost invariant vectors of $\pi\otimes_{\hat{A}} \pi^*$; this is the content of Proposition \ref{prop:tensorinv}.
    \item Apply relative property (T)${}_c$ to obtain an $H$-invariant vector for $\pi\otimes_{\hat{A}} \pi^*$.
    \item Justify that we can recover from it an invariant \emph{line} in the original space, this is the content of Proposition \ref{prop:tensortorep}.
\end{enumerate}

Let $G$ be a locally compact second countable group and $A$ a closed abelian normal subgroup. Let $\pi$ be a uniformly bounded representation of $G$ on an Hilbert space $\mathcal{H}_\pi$. Since $A$ is unitarisable, we may assume as before that $\pi\vert_A$ is unitary.  

Our first goal is to construct a representation $\pi'$ of $G/A$ from $\pi$. When $A$ is central and $\pi$ irreducible, it suffices by Schur's lemma to take a tensor product. In general, the representation $\pi'$ will be a fibred tensor product, as constructed in \cite[Section 3]{CWMS}.

Again, since the restriction $\pi\vert_A$ is unitary, the representation theory of abelian groups provides a projection-valued measure $E$ on $\hat{A}$. Then the equality $\pi(g)\pi(a)\pi(g^{-1})=\pi(g \cdot a)$ implies that $\pi(g)E(B)\pi(g^{-1})=E(gB)$ for any $g\in G$ and $B$ Borel subset of $\hat{A}$. The pair $(\pi,E)$ forms a system of imprimitivity in the sense of \cite[Chapter VI]{Var}, with the \emph{caveat} that $\pi$ is not unitary.

The projection-valued measure $E$ is said to be homogeneous if there is a $\sigma$-finite measure $\mu$ on $\hat{A}$, a Hilbert space $\mathcal{H}$ and a unitary operator $W:\mathcal{H}_\pi\to L^2(\hat{A},\mu;\mathcal{H})$ such that $WE(B)W^{-1}f=1_{B}f$ for any $f\in L^2(\hat{A},\mu;\mathcal{H})$ and any Borel set $B$. Furthermore, in this case, the measure $\mu$ is quasi-invariant under $G$, i.e. $\mu$ and $g_*\mu$ are in the same measure class for all $g\in G$. 

If the pvm $E$ is homogeneous, by \cite[Theorem 6.11]{Var}\footnote{In Varadarajan, the representations considered are unitary. However, one can check that \cite[Lemmas 6.4 and 6.5]{Var} are stated for arbitrary bounded maps. Furthermore, \cite[Lemma 6.6]{Var} is stated for unitary representations, but extends to our setting as it only uses \cite[Thm 32C]{Loom}, which works for arbitrary bounded representations on reflexive Banach spaces. Notice however that we still need a projection-valued measure, which we only get by unitarising our representation on the abelian subgroup $A$.} there exists a Hilbert space $\mathcal{H}$, a cocycle $\alpha:G\times \hat{A}\to \mathbf{B}(\mathcal{H})$ and a quasi-invariant measure $\mu$ on $\hat{A}$ such that, up to isomorphism, $\pi$ is the representation on $L^2(\hat{A},\mu;\mathcal{H})$ given by
\begin{align}\label{pi_rep_L2(A,H)}
    (\pi(g)f)(\chi)=\sqrt{D(g,\chi)}\alpha(g,\chi)f(g^{-1}\chi),
\end{align}
where $D$ is the Radon--Nikodym cocycle associated to the action of $G$ on $(\hat{A},\mu)$. 

Let $\alpha^*(g,\chi)=\alpha(g^{-1},g^{-1}\chi)^*=(\alpha(g,\chi)^{-1})^*$ be the contragredient cocycle. We define a new representation $\pi'=\pi\otimes_{\hat{A}} \pi^*$ on $L^2(\hat{A},\mu;\mathcal{H}\otimes \mathcal{H}^*)$ by
\begin{align}\label{pi'_tensor_rep}
    (\pi'(g)f)(\chi)=\sqrt{D(g,\chi)}(\alpha(g,\chi)\otimes \alpha^*(g,\chi))f(g^{-1}\chi).
\end{align}
Then $\pi'$ is uniformly bounded, and $\vert \pi'\vert=\sup \Vert \pi'(g)\Vert\leq c^2$. The action of $A$ on $\hat{A}$ induced by conjugation is trivial, and the cocycle $\alpha(a,\chi)$ is the multiplication by the character $\chi(a)$, so that $\pi'\vert_A=\Id$.

\begin{rmk}
    It follows from the proof of \cite[Theorem 6.11]{Var} and from \cite[Lemma 6.5]{Var} that each $\alpha(g,\chi)$ is bounded by $c=|\pi|$. Arbitrary products of these operators need not be bounded by $c$ since, unlike representations, the image of a cocycle is not a group. Morover, the map $(S,T)\mapsto S\otimes T$ is SOT-continuous on bounded sets so the tensor cocycle is also Borel.
\end{rmk}

By Proposition \ref{prop:ubtoisom}, every uniformly bounded representation admits an equivalent (ucus) Banach norm with respect to which it is isometric. However, here we want to work in the fibre $\mathcal{H}$. For any $\chi\in \hat{A}$, define a new norm on $\mathcal{H}^*$ by $\Vert \phi\Vert_{\chi,*}=\underset{g\in G}{\sup} \Vert\alpha^*(g,g\chi)\phi\Vert$. Then $\Vert\cdot \Vert_{\chi,*}$ is equivalent to the original Hilbert norm, with $$\Vert \phi\Vert \leq \Vert\phi\Vert_{\chi,*}\leq c\Vert \phi\Vert.$$ Furthermore, the cocycle identity gives that $\alpha^*(g,x):(\mathcal{H}^*,\Vert\cdot\Vert_{g^{-1}\chi,*}) \to (\mathcal{H}^*,\Vert \cdot\Vert_{\chi,*})$ is an isometry. Let $\Vert \cdot \Vert_{\chi}$ be the predual norm on $\mathcal{H}$, then$$\frac{1}{c}\Vert \xi\Vert \leq \Vert \xi\Vert_\chi\leq \Vert \xi\Vert$$ and $\alpha(g,x):(\mathcal{H},\Vert\cdot\Vert_{g^{-1}\chi}) \to (\mathcal{H},\Vert \cdot\Vert_{\chi})$ is an isometry. Furthermore, the norms $\Vert \cdot\Vert_\chi$ are all uniformly smooth, with a modulus of smoothness at most $\sqrt{1+c^2\tau^2}-1\leq \frac{c^2\tau^2}{2}$. Let $J_\chi:(\mathcal{H},\Vert\cdot\Vert_\chi)\to (\mathcal{H}^*,\Vert\cdot\Vert_{\chi,*})$ denote the duality mapping. Then $$\alpha^*(g,\chi)J_{g^{-1}\chi}(\xi)=J_\chi(\alpha(g,\chi)\xi).$$
An operator $T\in \mathbf{B}(\mathcal{H},\Vert\cdot\Vert_\chi)$ is absolutely $2$-summing if there exists a constant $C>0$ such that for every finite family of vectors $x_1,\dots,x_n\in \mathcal{H}$, \[\left(\sum_{i=1}^n \Vert Tx_i\Vert_\chi^2\right)^{\frac{1}{2}}\leq C \underset{\phi\in \mathcal{H}^*,\Vert \phi \Vert_{\chi,*}\leq 1}{\sup} \left(\sum_{i=1}^n \vert\phi(x_i)\vert^2\right)^{\frac{1}{2}}.\]The absolutely $2$-summing norm of $T$, denoted $N_\chi(T)$, is the infimum of all such constants. By \cite[Theorem 4.10]{DieJarTon}, the space of absolutely 2-summing operators is exactly the space of Hilbert--Schmidt operators $HS(\mathcal{H})$ and $N_\chi$ defines a norm on $HS(\mathcal{H})\simeq\mathcal{H}\otimes \mathcal{H}^*$ equivalent to the Hilbert--Schmidt norm (with norm ratio at most $c^2$) and which is a cross norm, i.e. $N_\chi(\xi\otimes \phi)=\Vert \xi\Vert_\chi\Vert \phi\Vert_{\chi,*}$.

For any $\xi$, the map $\chi\mapsto \Vert \xi\Vert_\chi$ is measurable. Thus if $f\in L^2(\hat{A},\mu;\mathcal{H})$, we can define $$N(f)=\left(\int_{\hat{A}} \Vert f(\chi)\Vert_\chi^2 d\mu(\chi)\right)^{1/2}.$$ Then $N$ is a norm on $L^2(\hat{A},\mu;\mathcal{H})$, which is again equivalent to the natural Hilbert norm $\Vert\cdot\Vert_2$. Since the family of norms $(N_\chi)$ is equi-uniformly smooth, endowed with this norm, the space is uniformly smooth (mimic the proof of \cite[Theorem 1.e.9]{LinTza} using that all spaces have the same bound on the modulus of smoothness) with modulus of smoothness $r_N$. The dual space is $L^2(\hat{A},\mu;\mathcal{H}^*)$ with the dual norm $N_*$ such that $N_*(f)^2=\int_{\hat{A}} \Vert f(\chi)\Vert_{\chi,*}^2 d\mu(\chi)$. Then one can check that the duality mapping sends $f$ to $J(f):\chi\mapsto J_\chi(f(\chi))$.

We must adapt \cite[Proposition 3.6]{CWMS} to the context of uniformly bounded representations.
\begin{prop}\label{prop:tensorinv}
Let $G$ be a locally compact second countable group, $A$ be a closed abelian normal subgroup of $G$, and $c\geq 1$. Let $\pi\in\mathcal{R}_c(G)$ such that the associated pvm on $\hat{A}$ is homogeneous. Let $f\in L^2(\hat{A},\mu;\mathcal{H})$ be a $(Q,\varepsilon)$-invariant unit vector of $\pi$, under the identification \eqref{pi_rep_L2(A,H)}. Define $f'\in L^2(\hat{A},\mu;\mathcal{H}\otimes \mathcal{H}^*)$ by $$f'(\chi)=\frac{1}{\Vert f(\chi)\Vert_\chi} f(\chi)\otimes J_\chi(f(\chi)),$$where by convention $\frac{0}{0}=0$. Then $f''=f'/\Vert f'\Vert$ is a $(Q,\varepsilon')$-invariant unit vector of $\pi\otimes_{\hat{A}}\pi^*$, with $$\varepsilon'=2\sqrt{2}c\left(c\varepsilon+\frac{r_N(2c\varepsilon)}{\varepsilon}\right)\underset{\varepsilon \to 0}{\longrightarrow}0,$$
where $r_N$ is the modulus of smoothness of $(L^2(\hat{A},\mu;\mathcal{H}),N)$.
\end{prop}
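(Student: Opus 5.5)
The plan is to show that the tensor vector depends only on $f$, through a nonlinear but uniformly continuous map that intertwines $\pi$ with $\pi\otimes_{\hat{A}}\pi^*$. Define, for $u\in L^2(\hat{A},\mu;\mathcal{H})$,
\begin{align*}
\Phi(u)(\chi)=\frac{1}{\Vert u(\chi)\Vert_\chi}\,u(\chi)\otimes J_\chi(u(\chi)),
\end{align*}
so that $f'=\Phi(f)$. On $L^2(\hat{A},\mu;\mathcal{H}\otimes\mathcal{H}^*)$ I will use the norm $N'(F)^2=\int_{\hat{A}} N_\chi(F(\chi))^2\,d\mu(\chi)$. Since $N_\chi$ is a cross norm and $\Vert J_\chi(\xi)\Vert_{\chi,*}=\Vert\xi\Vert_\chi$, we have $N_\chi(\Phi(u)(\chi))=\Vert u(\chi)\Vert_\chi$, and hence $N'(\Phi(u))=N(u)$. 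The comparisons $\frac1c\Vert\xi\Vert\le\Vert\xi\Vert_\chi\le\Vert\xi\Vert$ give $\frac1c\Vert u\Vert_2\le N(u)\le\Vert u\Vert_2$, and $N_\chi$ is within a factor $c^2$ of the Hilbert--Schmidt norm. These comparisons are used to move between $N,N'$ and the Hilbert norms in which $(Q,\varepsilon)$-invariance is stated; in particular $\Vert f'\Vert\gtrsim N(f)\ge 1/c$, which controls the final normalisation $f''=f'/\Vert f'\Vert$.

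\textbf{Step 1 (equivariance).} I claim $\pi'(g)\Phi(u)=\Phi(\pi(g)u)$ for all $g\in G$. Write $v(\chi)=\sqrt{D(g,\chi)}\,\alpha(g,\chi)u(g^{-1}\chi)=(\pi(g)u)(\chi)$. The right-hand side at $\chi$ is $v(\chi)\otimes J_\chi(v(\chi))/\Vert v(\chi)\Vert_\chi$. Three facts reduce this to the left-hand side $\sqrt{D}\,(\alpha\otimes\alpha^*)\Phi(u)(g^{-1}\chi)$:
\begin{itemize}
\item[(a)] $J_\chi$ is positively homogeneous of degree one, so the scalar $\sqrt{D}$ passes through $\Phi$ linearly.
\item[(b)] $\alpha(g,\chi):(\mathcal{H},\Vert\cdot\Vert_{g^{-1}\chi})\to(\mathcal{H},\Vert\cdot\Vert_\chi)$ is an isometry, so the denominators match.
\item[(c)] $\alpha^*(g,\chi)J_{g^{-1}\chi}(\xi)=J_\chi(\alpha(g,\chi)\xi)$, so the dual factors match.
\end{itemize}
Consequently, for $g\in Q$,
\begin{align*}
\pi'(g)f'-f'=\Phi(\pi(g)f)-\Phi(f),
\end{align*}
and the problem becomes a continuity estimate for $\Phi$ at nearby points $u=f$ and $v=\pi(g)f$, whose Hilbert distance is $<\varepsilon$.

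\textbf{Step 2 (fibrewise Lipschitz-type bound).} For $a=u(\chi)$ and $b=v(\chi)$ (dropping the subscript $\chi$), write
\begin{align*}
\frac{a\otimes Ja}{|a|}-\frac{b\otimes Jb}{|b|}=(a-b)\otimes\frac{Ja}{|a|}+\frac{b}{|b|}\otimes\Big((|b|-|a|)\frac{Ja}{|a|}+(Ja-Jb)\Big).
\end{align*}
The cross-norm property bounds the $N_\chi$-norm of this by $2|a-b|+|Ja-Jb|_{\chi,*}$; the degenerate cases where $a$ or $b$ vanish are handled by the convention $0/0=0$. Squaring, integrating and using Minkowski gives
\begin{align*}
N'(\Phi(u)-\Phi(v))\le 2N(u-v)+N_*(J(u)-J(v)).
\end{align*}
Here I use that the duality map of $(L^2,N)$ is $J(u)(\chi)=J_\chi(u(\chi))$. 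The crucial point is that the duality term is controlled globally, in the uniformly smooth space $(L^2,N)$, rather than fibre by fibre. Fibrewise control would fail where $u(\chi)$ is small.

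\textbf{Step 3 (apply Lemma \ref{lem:duality}) and the main obstacle.} I will apply Lemma \ref{lem:duality} in $(L^2,N)$ to the unit vectors $u/N(u)$ and $v/N(v)$. Their $N$-distance is at most $2N(u-v)/N(u)\le 2c\varepsilon$, using $N(u-v)\le\varepsilon$ and $N(u)\ge 1/c$. The lemma then bounds $N_*\big(J(u/N(u))-J(v/N(v))\big)$ by $r_N(2c\varepsilon)/(c\varepsilon)$. To return to the unnormalised vectors, I use homogeneity $J(u)=N(u)J(u/N(u))$ together with $|N(u)-N(v)|\le\varepsilon$. This produces a bound of the form $c\varepsilon+\mathrm{const}\cdot r_N(2c\varepsilon)/\varepsilon$.

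I expect this normalisation bookkeeping to be the main obstacle. $\pi$ is not isometric for $N$, so neither $f$ nor $\pi(g)f$ is an $N$-unit vector, and the factors $c$ and $\sqrt2$ in $\varepsilon'$ come from these conversions. I will track them carefully, including the passage from $N'$ back to the Hilbert norm on $\mathcal{H}\otimes\mathcal{H}^*$ (ratio $\le c^2$) and the division by $\Vert f'\Vert$. I will not insist on matching the exact constant beyond the stated form.

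Finally, $\varepsilon'\to0$ as $\varepsilon\to0$. This holds because uniform smoothness of $N$, obtained from the equi-smoothness of the fibre norms with $r\le c^2\tau^2/2$, gives $r_N(\tau)/\tau\to0$.
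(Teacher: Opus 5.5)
Your proposal is correct and follows the paper's proof essentially step for step: the equivariance $\pi'(g)\Phi(u)=\Phi(\pi(g)u)$ obtained from (b) and (c) is exactly how the paper identifies $(\pi'(g)f')(\chi)=Dw_1\otimes\hat{w}_2$, your fibrewise decomposition is the content of Lemma~\ref{lem:tensormaj}, and the duality term is likewise controlled globally in $(L^2(\hat{A},\mu;\mathcal{H}),N)$ by Lemma~\ref{lem:duality} before converting norms and dividing by $\Vert f'\Vert\geq 1/c$. Your explicit renormalisation before invoking Lemma~\ref{lem:duality} is more careful than the paper, which applies that lemma directly to the non-unit vectors $f$ and $\pi(g)f$; note, however, that an $N$-distance of $2c\varepsilon$ gives $r_N(4c\varepsilon)/(c\varepsilon)$ rather than $r_N(2c\varepsilon)/(c\varepsilon)$, so you recover $\varepsilon'$ only up to constants, which is all that the proof of Theorem~\ref{thm:quotab_intro} uses.
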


Before the proof, we point out that \cite[Lemma 3.7]{CWMS} remains true when $\mathcal{H}_i$ are replaced by Banach spaces, as long as we take a cross-norm on the tensor product. 
\begin{lem}\label{lem:tensormaj}
   Let $(E_i,\Vert \cdot \Vert_i)$, $i=1,2$, be two Banach spaces and consider the algebraic tensor product $E_1\otimes E_2$ endowed with any cross-norm $\Vert \cdot \Vert$. Let $v_i,w_i\in E_i$, with $\Vert w_1\Vert_1=\Vert w_2\Vert_2$. For any vector $z_i\in E_i$, denote $\hat{z}_i$ a unit vector such that $z_i=\Vert z_i\Vert_i\hat{z}_i$. Then for any $D\geq 0$, $$\Vert Dw_1 \otimes \hat{w}_2-\hat{v}_1\otimes v_2\Vert\leq 2\Vert D w_1-v_1 \Vert_1 + \Vert Dw_ 2-v_2\Vert_2$$ 
\end{lem}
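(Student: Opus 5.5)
The proof is a single telescoping step with the triangle inequality. The idea is to move the scalar $\|w_1\|_1=\|w_2\|_2$ from the first tensor factor to the second, and then use that a cross-norm satisfies $\|x\otimes y\|=\|x\|_1\|y\|_2$.

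Set $a=\|w_1\|_1=\|w_2\|_2$. Since $w_1=a\hat w_1$ and $w_2=a\hat w_2$, we have
\begin{align*}
Dw_1\otimes\hat w_2 = Da\,\hat w_1\otimes\hat w_2=\hat w_1\otimes Dw_2 .
\end{align*}
If $a=0$, both sides of this identity are zero for any choice of the unit vectors $\hat w_i$, so the identity holds regardless of the convention for $\hat 0$. Subtracting $\hat v_1\otimes v_2$ and inserting the intermediate term $\hat v_1\otimes Dw_2$ gives
\begin{align*}
Dw_1\otimes\hat w_2-\hat v_1\otimes v_2=(\hat w_1-\hat v_1)\otimes Dw_2+\hat v_1\otimes(Dw_2-v_2).
\end{align*}
By the triangle inequality and the cross-norm property,
\begin{align*}
\|Dw_1\otimes\hat w_2-\hat v_1\otimes v_2\| \leq Da\,\|\hat w_1-\hat v_1\|_1+\|Dw_2-v_2\|_2 .
\end{align*}

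It remains to show $Da\,\|\hat w_1-\hat v_1\|_1\leq 2\|Dw_1-v_1\|_1$. This is the elementary estimate for normalisations: for $x\in E_1$ and $y\in E_1$,
\begin{align*}
\|x\|_1\,\|\hat x-\hat y\|_1=\bigl\|x-\|x\|_1\hat y\bigr\|_1\leq\|x-y\|_1+\bigl|\|y\|_1-\|x\|_1\bigr|\leq 2\|x-y\|_1 ,
\end{align*}
where the middle step writes $x-\|x\|_1\hat y=(x-y)+(\|y\|_1-\|x\|_1)\hat y$. Apply this with $x=Dw_1$ and $y=v_1$.

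The only point needing care is the degenerate cases, where the hats are arbitrary unit vectors:
\begin{itemize}
\item If $D>0$ and $a>0$, then $\widehat{Dw_1}=\hat w_1$ and $\|x\|_1=Da$, so the estimate applies directly.
\item If $Da=0$, the first term vanishes outright.
\item If $v_1=0$, then $\hat v_1$ is an arbitrary unit vector. The estimate still holds, since its left side is at most $2\|x\|_1=2\|x-y\|_1$.
\end{itemize}
Combining the two displays gives the claimed bound $2\|Dw_1-v_1\|_1+\|Dw_2-v_2\|_2$.
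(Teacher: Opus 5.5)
Your proof is correct and is essentially the argument the paper intends. The paper gives no proof of its own beyond saying it is identical to \cite[Lemma 3.7]{CWMS}, and your steps (rewriting $Dw_1\otimes\hat w_2=\hat w_1\otimes Dw_2$, telescoping through $\hat v_1\otimes Dw_2$, then the normalisation estimate) use only the cross-norm property, which is why that argument carries over verbatim to Banach spaces. You can drop the final case analysis: the inequality $Da\|\hat w_1-\hat v_1\|_1=\|Dw_1-Da\,\hat v_1\|_1\leq\|Dw_1-v_1\|_1+\bigl|\|v_1\|_1-Da\bigr|$ holds for every admissible choice of the unit vectors.
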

\begin{proof}
    The proof is identical to that of \cite[Lemma 3.7]{CWMS}.
\end{proof}

\begin{proof}[Proof of Proposition \ref{prop:tensorinv}]
    Since $f$ is a unit vector and $\Vert\cdot \Vert_\chi$ and $\Vert\cdot \Vert$ have norm ratio at most $c$, $\Vert f'\Vert_2\in [\frac{1}{c},1]$ so it suffices to control the invariance of $f'$.

    Let $g\in Q$. Apply Lemma \ref{lem:tensormaj} to the spaces $(\mathcal{H},\Vert\cdot\Vert_\chi)$ and $(\mathcal{H}^*,\Vert \cdot\Vert_{\chi_*})$, with $D=\sqrt{D(g,\chi)}$, $v_1=f(\chi)$, $w_1=\alpha(g,\chi)f(g^{-1}\chi)$, $v_2=J_\chi(v_1)$, $w_2=J_\chi(w_1)$. Since $$\alpha^*(g,\chi)J_{g^{-1}\chi}(f(g^{-1}\chi))=J_{\chi}(\alpha(g,\chi)f(g^{-1}\chi),$$we have $(\pi'(g)f')(\chi)=Dw_1\otimes \hat{w}_2$. 

    Then \begin{align*}
        N_\chi((\pi'(g)f')(\chi)-f'(\chi)) &\leq 2\Vert Dw_1-v_1\Vert_\chi + \Vert Dw_2-v_2\Vert_{\chi,*}\\
        &\leq 2\Vert (\pi(g)f)(\chi)-f(\chi)\Vert_\chi+\Vert J_\chi((\pi(g)f)(\chi))-J_\chi(f(\chi))\Vert_{\chi,*}
    \end{align*}

By equivalence of norms, $$\Vert (\pi'(g)f')(\chi)-f'(\chi)\Vert\leq 2c \Vert (\pi(g)f)(\chi)-f(\chi)\Vert+c\Vert J_\chi((\pi(g)f)(\chi))-J_\chi(f(\chi))\Vert_{\chi,*}$$so $$\Vert (\pi'(g)f')(\chi)-f'(\chi)\Vert^2\leq 8c^2 \Vert (\pi(g)f)(\chi)-f(\chi)\Vert^2+2c^2\Vert J_\chi((\pi(g)f)(\chi))-J_\chi(f(\chi))\Vert_{\chi,*}.$$Thus integrating, $$\Vert \pi'(g)f'-f'\Vert_2^2 \leq 8c^2\Vert \pi(g)f-f\Vert_2^2+2c^2 N_*(J(\pi(g)f)-J(f)).$$

By assumption, since $g\in Q$, $\Vert \pi(g)f-f\Vert_2<\varepsilon$. Thus by Lemma \ref{lem:duality}, $$N_*(J(\pi(g)f)-J(f)) \leq 2\frac{r_N(2c\varepsilon)}{c\varepsilon}.$$

Finally, we obtain \begin{equation*}
    \Vert \pi'(g)f'-f'\Vert_2  \leq \sqrt{8c^2\varepsilon^2+8c^2 \frac{r_N(2c\varepsilon)^2}{c^2\varepsilon^2}} \leq 2\sqrt{2}\left(c\varepsilon + \frac{r_N(2c\varepsilon)}{\varepsilon}\right)
\end{equation*}which goes to $0$ as $\varepsilon\to 0$ by uniform smoothness of $N$.
\end{proof}

Next, we prove a version of \cite[Proposition 3.8]{CWMS} for uniformly bounded representations.
\begin{prop}\label{prop:tensortorep} Let $G$ be a locally compact second countable group and let $\rho$ be a uniformly bounded representation of $G$ on $\mathcal{H}$. Let $\xi\in \mathcal{H}$ and let $\xi^*\in \mathcal{H}^*$ be a linear form such that $\xi^*(\xi)=\Vert \xi\Vert^2$. Let $\eta'\in \mathcal{H}\otimes \mathcal{H}^*$ be a $G$-invariant vector of $\rho\otimes \rho^*$. Then there exists a vector $\eta\in \mathcal{H}$ such that the line $\C\eta$ is $\rho(G)$-invariant and such that $$\Vert \eta-\xi\Vert \Vert \xi\Vert \leq 7\Vert \eta'-\xi\otimes \xi^*\Vert.$$ 
\end{prop}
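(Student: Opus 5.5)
We identify $\mathcal{H}\otimes\mathcal{H}^*$ with the Hilbert--Schmidt operators $HS(\mathcal{H})$, so that $\xi\otimes\xi^*$ is the rank-one operator $x\mapsto \xi^*(x)\xi$. Under this identification, $\rho\otimes\rho^*$ acts by conjugation: $(\rho\otimes\rho^*)(g)T=\rho(g)T\rho(g)^{-1}$. A $G$-invariant vector $\eta'$ is therefore a Hilbert--Schmidt operator $T$ that commutes with $\rho(G)$. The norm on $\mathcal{H}\otimes\mathcal{H}^*$ is the Hilbert--Schmidt norm, which dominates the operator norm. We may assume $\xi\neq 0$ and $\Vert T-\xi\otimes\xi^*\Vert_{HS}<\Vert\xi\Vert^2/7$; otherwise we take $\eta=0$ and the bound $\Vert\xi\Vert^2\le 7\Vert\eta'-\xi\otimes\xi^*\Vert$ holds trivially. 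By scaling we may assume $\Vert\xi\Vert=1$, so that $\xi\otimes\xi^*$ is the orthogonal projection onto $\C\xi$ (here $\xi^*=\langle\cdot,\xi\rangle$).

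The key step is spectral perturbation. $T$ is compact and within $\delta:=\Vert T-\xi\otimes\xi^*\Vert_{HS}<1/7$ of a rank-one projection, whose spectrum is $\{0,1\}$. For a non-normal compact operator we estimate resolvents directly rather than rely on Weyl-type bounds. On the circle $\Gamma=\{|z-1|=1/2\}$ we have $\Vert(z-\xi\otimes\xi^*)^{-1}\Vert\le 2$, since $\xi\otimes\xi^*$ is self-adjoint with spectrum $\{0,1\}$. A Neumann series then shows $z-T$ is invertible on $\Gamma$ with $\Vert(z-T)^{-1}\Vert\le 2/(1-2\delta)$. We define the Riesz projection
\[
P_T=\frac{1}{2\pi i}\oint_\Gamma (z-T)^{-1}\,dz,
\]
and the resolvent identity gives
\[
\Vert P_T-\xi\otimes\xi^*\Vert\le \frac{1}{2}\cdot 2\cdot\frac{2}{1-2\delta}\,\delta=\frac{2\delta}{1-2\delta}<1 .
\]
Hence $P_T$ has the same rank as $\xi\otimes\xi^*$, namely one. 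Since $T$ commutes with every $\rho(g)$, so does every resolvent $(z-T)^{-1}$, and therefore so does $P_T$. The range of $P_T$ is thus a $\rho(G)$-invariant line.

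It remains to choose $\eta$ in this line and check the constant. We set $\eta=P_T\xi$, so that
\[
\Vert\eta-\xi\Vert=\Vert(P_T-\xi\otimes\xi^*)\xi\Vert\le \frac{2\delta}{1-2\delta}.
\]
When $\delta\le 1/7$ this gives $\Vert\eta-\xi\Vert\le (2/(5/7))\delta=14\delta/5\le 7\delta$, which is the required inequality. The large-$\delta$ case is the trivial one handled at the start.

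The main obstacle is making the perturbation estimate honest. We must check that the quasi-unitary $\rho$ causes no harm: commutation with $\rho(G)$ is purely algebraic, so only the invariance $\rho(g)T\rho(g)^{-1}=T$ is used. We also need the rank-one conclusion from $\Vert P_T-Q\Vert<1$, which is standard for idempotents. If the constant turns out tight, we would instead take $\eta$ directly as the normalized image $T\xi/\xi^*(T\xi)$ after showing that $T$ acts on the invariant line by a scalar close to $1$.
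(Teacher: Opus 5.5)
Your proposal follows the paper's proof essentially step for step: the Hilbert--Schmidt identification with $\rho\otimes\rho^*$ acting by conjugation, the reduction to $\delta<1/7$ (taking $\eta=0$ otherwise), the Riesz projection over the circle $|z-1|=1/2$ with the Neumann-series bound $\Vert P_T-\xi\otimes\xi^*\Vert\le 2\delta/(1-2\delta)<1$, commutation of $P_T$ with $\rho(G)$, and $\eta=P_T\xi$ with $2\delta/(1-2\delta)\le 7\delta$. One point to be aware of, which the paper shares when it asserts $\Vert R(z,P_\xi)\Vert=2$: the bound $\Vert (z-\xi\otimes\xi^*)^{-1}\Vert\le 2$ requires $\xi\otimes\xi^*$ to be the \emph{orthogonal} projection onto $\C\xi$, i.e.\ $\xi^*=\langle\cdot,\xi\rangle$, and the hypothesis $\xi^*(\xi)=\Vert\xi\Vert^2$ forces this only if also $\Vert\xi^*\Vert=\Vert\xi\Vert$; for an oblique $\xi^*$ the resolvent norm grows like $\Vert\xi^*\Vert/\Vert\xi\Vert$, and the same argument then gives a constant depending on that ratio rather than $7$.
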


\begin{proof}
    We may assume that $\Vert \xi\Vert=1$. We may also assume that $\delta=\Vert \eta'-\xi\otimes \xi^*\Vert<\frac{1}{7}$, otherwise the inequality we want to prove holds with $\eta=0$. Under the identification of $\mathcal{H}\otimes \mathcal{H}^*$ with the space $HS(\mathcal{H})$ of Hilbert--Schmidt operators, the representation of $G$ is $(g,S)\mapsto \rho(g)S\rho(g^{-1})$. To avoid confusion with the numerous other norms, we denote the Hilbert--Schmidt norm of $S$ by $\Vert S\Vert_{HS}$ and the operator norm $\Vert S\Vert_{\op}$. Let $T$ be the operator corresponding to $\eta'$. Then $T$ is an intertwining operator for $\rho$. Let $P_\xi$ be the operator corresponding to $\xi\otimes\xi^*$, then since $\xi^*(\xi)=1$, this is a projection onto $\C\xi$.

    The spectrum of $P_\xi$ is $\{0,1\}$. Let $\mathcal{C}$ be the circle of radius $1/2$ around $1$. Then for any $z\in \mathcal{C}$, we can consider the resolvent $R(z,P_\xi)=(z \Id-P_\xi)^{-1}$. For $z\in \mathcal{C}$, $\Vert R(z,P_\xi)\Vert_{\op}=2$. Since $$z\Id-T=(z\Id-P_\xi)(\Id-R(z,P_\xi)(T-P_\xi))$$and $$\Vert R(z,P_\xi)(T-P_\xi)\Vert_{\op} \leq \Vert R(z,P_\xi)\Vert_{\op} \Vert T-P_\xi\Vert_{\op}\leq 2\Vert T-P_\xi\Vert_{HS}\leq 2\delta<1,$$ $z\Id-T$ is invertible for $z\in\mathcal{C}$ and $$R(z,T)=\left(\sum_{n=0}^\infty (R(z,P_\xi)(T-P_\xi))^n\right) R(z,P_\xi)$$

    Then $\Vert R(z,T)-R(z,P_\xi)\Vert_{\op} \leq 2\sum_{n=1}^\infty (2\delta)^n=\frac{4\delta}{1-2\delta}\leq \frac{4}{5}$. Then consider the Riesz projection $$P_T=\frac{1}{2i\pi}\int_\mathcal{C} R(z,T)\ dz.$$Since $P_\xi=\frac{1}{2i\pi}\int_\mathcal{C} R(z,P_\xi)\ dz$, we have $$\Vert P_T-P_\xi\Vert_{\op}\leq \frac{1}{2\pi}\int_\mathcal{C}\Vert R(z,T)-R(z,P_\xi)\Vert_{\op}\ dz\leq \frac{1}{2\pi}\pi \frac{4\delta}{1-2\delta}\leq\frac{2}{5}<1.$$So the projection $P_T$ has the same rank as $P_\xi$, which is $1$. Let $\eta=P_T\xi$. Since $\rho(g)T=T\rho(g)$ for any $g$, then $\rho(g)P_T=P_T\rho(g)$ so the line $\C\eta=\mathrm{Im}P_T$ is $\rho(G)$-invariant. Thus, the vector $\eta$ is $\rho(\overline{[G,G]})$-invariant. Furthermore,
    \begin{align*}
        \Vert \eta-\xi\Vert&=\Vert P_T\xi-P_\xi\xi\Vert \leq \Vert P_T-P_\xi\Vert_{\op} \leq \frac{2\delta}{1-2\delta}\leq 7\delta.\qedhere
    \end{align*}
\end{proof}

Now we have all the ingredients for proving Theorem \ref{thm:quotab_intro}.

\begin{proof}[Proof of Theorem \ref{thm:quotab_intro}]
    Since the pair $(G/A,H/A)$ has relative property (T)${}_{c^2}$, we know that for any $\delta>0$, there is a compact subset $Q$ and $\varepsilon>0$ such that if $\xi$ is a $(Q,\varepsilon)$-invariant vector of $\pi\in \mathcal{R}_{c^2}(G/A)$, then there is $\eta$ which is $H/A$-invariant with $\Vert \eta-\xi\Vert <\delta/7$.

    Let $\pi\in \mathcal{R}_c(G)$ which has almost invariant vectors. Since $A$ is abelian, we may assume that $\pi\vert_A$ is unitary. Replacing $\pi$ by an infinite direct sum of copies of $\pi$, the associated pvm is homogeneous. Thus, we can construct $\pi'=\pi\otimes_{\hat{A}} \pi^*$ as in \eqref{pi'_tensor_rep}, which is also a representation of $G/A$. Fix $\varepsilon_0$ such that $$2\sqrt{2}c\left(c\varepsilon_0+\frac{r_N(2c\varepsilon_0)}{\varepsilon_0}\right)\leq \varepsilon,$$which exists by uniform smoothness of $(L^2(\hat{A},\mu;\mathcal{H}),N)$. By assumption, $\pi$ almost has invariant vectors so there exists a unit vector $f\in L^2(\hat{A},\mu;\mathcal{H})$ which is $(Q,\varepsilon_0)$-invariant. By Proposition \ref{prop:tensorinv}, we can construct a unit vector $f''\in L^2(\hat{A},\mu;\mathcal{H}\otimes \mathcal{H}^*)$ which is $(Q,\varepsilon)$-invariant under $\pi'$. Furthermore, there is $M\in[1, c]$ such that for any $\chi\in \hat{A}$, $$f''(\chi)=\frac{M}{\Vert f(\chi)\Vert_\chi} f(\chi)\otimes J_\chi(f(\chi)).$$Then, by relative property (T)${}_{c^2}$ for the pair $(G/A,H/A)$, there is an actual $\pi'(H)$-invariant vector $f'_H$ with $$\Vert f'_H-f''\Vert <\delta/7.$$

    So for all $h\in H$ and almost every $\chi\in \hat{A}$, $f'_H(\chi)=\sqrt{D(h,\chi)} \alpha'(h,\chi)f'_H(h^{-1}\chi)$. Now $\alpha'(h,\chi)=\alpha(h,\chi)\otimes \alpha^*(h,\chi)$ is an isometry $(\mathcal{H}\otimes \mathcal{H^*},N_{h^{-1}\chi})\to (\mathcal{H}\otimes \mathcal{H^*},N_{\chi})$, so $$N_\chi(f'_H(\chi))=\sqrt{D(h,\chi)} N_{h^{-1}\chi}(f'_H(h^{-1}\chi))$$ and the measure $N_\cdot(f'_H)\mu$ is $H$-invariant. Since $f'_H$ is square-integrable for the usual Hilbert norm on $\mathcal{H}\otimes \mathcal{H}^*$, which is (uniformly on $\chi$) equivalent to $N_\chi$, this measure is also finite. Thus by assumption, its support is contained in $\hat{A}^H$, the set of fixed points under the action of $H$. So up to changing $f'_H$ on a set of measure $0$, we may assume that $f'_H$ is zero outside of $\hat{A}^H$. Fix $\chi\in \hat{A}^H$. Then the map $h\mapsto \alpha(h,\chi)$ is a (uniformly bounded) representation $\rho_\chi$ of $H$ (using cocycle property and that $\chi$ is fixed). Furthermore, the Radon--Nikodym cocycle $D(h,\chi)$ is trivial for $h\in H$, $\chi\in \hat{A}^H$. Thus, $f'_H(\chi)\in \mathcal{H}\otimes \mathcal{H}^*$ is invariant under $\rho_\chi\otimes \rho_\chi^*$. By Proposition \ref{prop:tensortorep} applied with $\xi=f(\chi)$, $\xi^*=J_\chi(f(\chi))$ and $\eta'=\frac{f'_H(\chi)\Vert f(\chi)\Vert_\chi}{M}$, there is $\eta(\chi)\in \mathcal{H}$ which is $\rho_\chi(\overline{[H,H]})$-invariant and such that $$\Vert \eta(\chi)-f(\chi)\Vert \Vert f(\chi)\Vert \leq 7\Vert \eta'-f(\chi)\otimes J_\chi(f(\chi))\Vert.$$ Thus $$\Vert \eta(\chi)-f(\chi)\Vert\leq 7\frac{\Vert f(\chi)\Vert_\chi}{M\Vert f(\chi)\Vert }\Vert f'_H(\chi)-f''(\chi)\Vert=7\Vert f'_H(\chi)-f''(\chi)\Vert$$Furthermore, applying the Jankov--von Neumann uniformisation theorem \cite[Theorem 18.1]{Kec} to the analytic set of all pairs $(\chi,\eta(\chi))$ with $\eta(\chi)$ as above,  $\eta$ can be chosen to be a measurable function. Then $$\Vert \eta-f\Vert_2\leq 7\Vert f'_H-f'\Vert_2<\delta.$$Thus since $f$ is a unit vector, it suffices to take $\delta$ small enough, then $\eta\neq 0$ and is an $\overline{[H,H]}$-invariant vector in $L^2(\hat{A},\mu;\mathcal{H})$.
\end{proof}

\begin{rmk}\label{rmk:central}
    If we assume that $A$ is a subgroup of the centre $Z(H)$, then $H$ acts trivially on $A$, thus the assumption on the support of measures is automatically verified. 
\end{rmk}

As a first consequence, taking $H=G$, we immediately obtain Corollary \ref{Cor_Serre_intro}.

\begin{proof}[Proof of Corollary \ref{Cor_Serre_intro}]
    Since we are assuming that $(G/A,G/A)$ satisfies relative property (T)${}_{c^2}$, Theorem \ref{thm:quotab_intro} ensures that the pair $(G,G^{ab})$ has relative property (T)${}_c$. Furthermore, $G^{ab}$ satisfies property (T)${}_c$ because it is compact. By Proposition \ref{Prop_G/H_(G,H)}, we conclude that $G$ has property (T)${}_c$.
\end{proof}

A second consequence allows us to extend our result on abelian groups to nilpotent groups.
\begin{thm}\label{thm:nilpotent}
    Let $N$ be a closed nilpotent normal subgroup of $G$ locally compact second countable. Then $(G,N)$ has relative property (T${}_{ub}$) if and only if $(G/\overline{[N,N]},N^{ab})$ does.
\end{thm}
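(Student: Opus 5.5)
Write $N_1=\overline{[N,N]}$, so that $N^{ab}=N/N_1$. The forward implication is immediate from Lemma \ref{Lem_T_H}. Apply it with $H=N_1$ and $K=N$; both are closed and normal in $G$, since $N_1$ is characteristic in $N$. It gives, for each $c\geq 1$, that relative property (T)${}_c$ for $(G,N)$ passes to $(G/N_1,N/N_1)$. Taking all $c$ yields the claim.

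For the converse, assume $(G/N_1,N^{ab})$ has relative property (T${}_{ub}$) and fix $c\geq 1$. By Lemma \ref{Lem_(G/H,K/H)} with $K=N$ and $H=N_1$, it suffices to show that $(G,N_1)$ has relative property (T)${}_c$. I will proceed by induction on the nilpotency class $n$ of $N$, proving the following statement for all $c$ simultaneously:
\begin{center}
if $N$ is nilpotent of class $n$ and $(G/N_1,N/N_1)$ has (T${}_{ub}$), then $(G,N)$ has (T${}_{ub}$).
\end{center}
The case $n=1$ is trivial, since then $N_1=\{e\}$.

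For the inductive step, let $A=N_{n-1}$ be the last nontrivial term of the lower central series. Then $A$ is closed, abelian and normal in $G$, and it is central in $N$. The group $N/A$ is nilpotent of class $n-1$ and is normal in $G/A$. Its closed commutator subgroup is $N_1/A$, and quotienting gives $(G/A)/(N_1/A)=G/N_1$. By the induction hypothesis applied in $G/A$, the pair $(G/A,N/A)$ has (T${}_{ub}$), in particular (T)${}_{c^2}$. Now apply Theorem \ref{thm:quotab_intro} with $H=N$. Since $A\leq Z(N)$, Remark \ref{rmk:central} shows that the support hypothesis on $N$-invariant finite measures on $\hat A$ holds automatically. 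Hence $(G,\overline{[N,N]})=(G,N_1)$ has relative property (T)${}_c$. Combined with $(G/N_1,N/N_1)$ having (T)${}_c$, Lemma \ref{Lem_(G/H,K/H)} gives that $(G,N)$ has (T)${}_c$. Since $c$ was arbitrary, $(G,N)$ has (T${}_{ub}$), which closes the induction.

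The main point to check is that the hypotheses line up. Second countability passes to the quotient $G/A$. The passage from $c^2$ to $c$ is harmless only because the induction hypothesis is stated for all $c$ at once. We also need $N_1/A$ to be exactly the closed commutator subgroup of $N/A$. This holds because the image of $[N,N]$ is dense in $N_1/A$, and $N_1/A$ is closed since $N_1$ contains $A$ when $n\geq 2$. Finally, $N$ is indeed a closed normal subgroup of $G$ containing $A$, as Theorem \ref{thm:quotab_intro} requires.
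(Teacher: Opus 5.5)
Your proposal is correct and follows essentially the same route as the paper. The forward direction goes through the quotient statement (Lemma \ref{Lem_T_H}, which is what the paper's appeal to Lemma \ref{Lem_(G/H,K/H)} uses), and the converse is an induction on the nilpotency class: you quotient by the last nontrivial central term $A$ of the lower central series, apply the induction hypothesis to $(G/A,N/A)$, then apply Theorem \ref{thm:quotab_intro} with $H=N$ via Remark \ref{rmk:central}, and conclude with Lemma \ref{Lem_(G/H,K/H)}. Your explicit handling of the passage from (T)${}_{c^2}$ to (T)${}_c$ and of the identification $\overline{[N/A,N/A]}=N_1/A$ makes precise points the paper leaves implicit.
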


\begin{proof}
    If $(G,N)$ has relative property (T${}_{ub}$), then so does the quotient pair $(G/\overline{[N,N]},N^{ab})$ by Lemma \ref{Lem_(G/H,K/H)}.
    
    For the other direction, we assume that $(G/\overline{[N,N]},N^{ab})$ has property (T${}_{ub}$) and proceed by induction on the nilpotency class $n$ of $N$.

    If $n=1$, $[N,N]$ is trivial and $N$ is abelian so there is nothing to prove.

    Assume that the theorem holds for nilpotency class $k\leq n$ and that $N$ is nilpotent of class $n+1$. Consider the lower central series \[N=N_0\triangleright N_1\triangleright \cdots \triangleright N_{n+1}=\{e\}\]where $N_{i+1}=\overline{[N_i,N]}$. Then $N_n\leq Z(N)$. Set $N'=N/N_n$. Since $N_n$ is central and contained in all $N_i$'s, one can check that $$\overline{[N_i/N_n,N/N_n]}=\overline{[N_i,N]}/N_n=N_{i+1}/N_n$$ so that $N'$ is nilpotent of class $n$ with lower central series given by the $N_i/N_n$'s. Since $N_n\leq N$, $N_n\triangleleft G$ and by the third isomorphism theorem, we have both \[(N')^{ab}=N'/(N_1/N_n)\simeq N/N_1=N^{ab}\]and\[(G/N_n)/\overline{[N',N']}\simeq G/\overline{[N,N]}.\]

    Thus, by assumption, the pair $\left((G/N_n)/\overline{[N',N']},(N')^{ab}\right)$ has relative property (T${}_{ub}$) and by induction hypothesis, so does $(G/N_n,N/N_n)$. Now since $N_n$ is central in $N$, by Remark \ref{rmk:central} we can apply Theorem \ref{thm:quotab_intro} and obtain that $(G,\overline{[N,N]})$ has relative property (T${}_{ub}$). Hence, by Lemma \ref{Lem_(G/H,K/H)}, since both $(G,\overline{[N,N]})$ and $(G/\overline{[N,N]},N/\overline{[N,N]})$ have relative property (T)${}_c$ for all $c\geq 1$, the same holds for $(G,N)$.
\end{proof}

\begin{proof}[Proof of Theorem \ref{thm:mainthmnil}]
    By definition, if $(G,N)$ has relative property (T${}_{ub}$), then in particular $(G,N)$ has relative property (T). 
    
    Assume that $(G,N)$ has relative property (T). Then by Lemma \ref{Lem_(G/H,K/H)}, so does the quotient pair $(G/\overline{[N,N]},N^{ab})$. By Theorem \ref{thm:semidirectab}, $(G/\overline{[N,N]},N^{ab})$ has relative property (T${}_{ub}$). Thus by Theorem \ref{thm:nilpotent}, $(G,N)$ has relative property (T${}_{ub}$).
\end{proof}

\bibliographystyle{plain} 

\bibliography{Bibliography}

\end{document}